\newcommand{\N}{\mathbf{N}}
\newcommand{\Z}{\mathbf{Z}}
\newcommand{\Q}{\mathbf{Q}}
\newcommand{\C}{\mathbf{C}}
\newcommand{\F}{\mathbf{F}}
\DeclareMathOperator{\Gal}{Gal}
\DeclareMathOperator{\Sp}{Sp}
\DeclareMathOperator{\GSp}{GSp}
\DeclareMathOperator{\SL}{SL}
\DeclareMathOperator{\GL}{GL}
\DeclareMathOperator{\Sym}{Sym}
\DeclareMathOperator{\tr}{tr}
\DeclareMathOperator{\Frob}{Frob}
\DeclareMathOperator{\Aut}{Aut}
\DeclareMathOperator{\an}{an}
\newcommand{\ind}{\mathbf{1}}
\DeclareMathOperator{\Stab}{Stab}
\DeclareMathOperator{\Orb}{Orb}
\DeclareMathOperator{\Var}{Var}
\theoremstyle{definition}
\newtheorem{thm}{Theorem}[section]
\newtheorem{defn}[thm]{Definition}
\newtheorem{cor}[thm]{Corollary}
\newtheorem{lem}[thm]{Lemma}
\newtheorem{prop}[thm]{Proposition}
\newtheorem{conj}[thm]{Conjecture}
\newtheorem*{thm:universal-surface}{Theorem~\ref{thm:universal-surface}}
\newtheorem*{thm:low-degree-computation}{Theorem~\ref{thm:low-degree-computation}}
\newtheorem*{thm:low-degree-sym-n}{Theorem~\ref{thm:low-degree-sym-n}}
\newtheorem*{cor:avg-fq-points}{Corollary~\ref{cor:avg-fq-points}}
\newtheorem*{cor:nth-fiber-asymptotic-in-q}{Corollary~\ref{cor:nth-fiber-asymptotic-in-q}}
\newtheorem*{cor:sym-asymptotic-in-q}{Corollary~\ref{cor:sym-asymptotic-in-q}}
\newtheorem*{cor:variance}{Corollary~\ref{cor:variance}}
\theoremstyle{remark}
\newtheorem{rmk}[thm]{Remark}
\begin{document}
\title{Cohomology of the Universal Abelian Surface with Applications to Arithmetic Statistics}
\author{Seraphina Eun Bi Lee}
\address{Department of Mathematics, University of Chicago}
\email{seraphinalee@uchicago.edu}

\begin{abstract}
The moduli stack $\mathcal A_2$ of principally polarized abelian surfaces comes equipped with the universal abelian surface $\pi: \mathcal X_2 \to \mathcal A_2$. The fiber of $\pi$ over a point corresponding to an abelian surface $A$ in $\mathcal A_2$ is $A$ itself. We determine the $\ell$-adic cohomology of $\mathcal X_2$ as a Galois representation. Similarly, we consider the bundles $\mathcal X_2^n \to \mathcal A_2$ and $\mathcal X_2^{\Sym(n)} \to \mathcal A_2$ for all $n \geq 1$, where the fiber over a point corresponding to an abelian surface $A$ is $A^n$ and $\Sym^n A$ respectively. We describe how to compute the $\ell$-adic cohomology of $\mathcal X_2^n$ and $\mathcal X_2^{\Sym(n)}$ and explicitly calculate it in low degrees for all $n$ and in all degrees for $n = 2$. These results yield new information regarding the arithmetic statistics on abelian surfaces, including an exact calculation of the expected value and variance as well as asymptotics for higher moments of the number of $\F_q$-points.
\end{abstract}
\maketitle
\tableofcontents
\section{Introduction}\label{sec:introduction}
An \emph{abelian surface} is an abelian variety of dimension $2$. Over $\C$, all abelian surfaces are isomorphic to $\C^2/L$ for some lattice $L$ with real rank $4$. The fine moduli stack $\mathcal A_2$ of principally polarized abelian surfaces is a smooth Deligne--Mumford stack defined over $\Z$. It comes equipped with a universal bundle $\mathcal X_2 \to \mathcal A_2$. The fiber over the point corresponding to an abelian surface $A$ in $\mathcal A_2$ is $A$ itself. Using the projection map $\mathcal X_2 \to \mathcal A_2$, we can take $n$th fiber powers $\mathcal X_2^n$ of $\mathcal X_2$ over $\mathcal A_2$, which has the $n$th power $A^n$ of an abelian surface over the corresponding point in $\mathcal A_2$. Since each $A^n$ has an action of $S_n$ permuting the coordinates (which is not a free action), taking the quotient $\mathcal X_2^n/S_n$ gives a new stack $\mathcal X_2^{\Sym(n)}$, which has $\Sym^n A$ as a fiber over the point corresponding to $A$ in $\mathcal A_2$. 

Our main theorems are the computations of the $\ell$-adic cohomology of the universal abelian surface and related spaces as Galois representations (up to semi-simplification). From now on, all cohomology will denote $\ell$-adic cohomology and we drop the subscripts to write $H^*(\mathcal X; \mathbf V)$ in place of both $H^*_{\text{\'et}}(\mathcal X_{\overline \Q}; \mathbf V)$ and $H^*_{\text{\'et}}(\mathcal X_{\overline \F_q}; \mathbf V)$ for any $\ell$-adic local system $\mathbf V$ on $\mathcal X$ with $\ell$ coprime to $q$. (See Remark \ref{rmk:etale-Q-Fq} for details justifying this notation.)
\begin{thm}\label{thm:universal-surface}
The cohomology of the universal abelian surface $\mathcal X_2$ is given by
\[
  H^k(\mathcal X_2; \Q_\ell) = \begin{cases}
  \Q_\ell & k = 0 \\
  0 & k = 1, 3, k \geq 7\\
  2\Q_\ell(-1) & k = 2 \\
  2\Q_\ell(-2) & k = 4 \\
  \Q_\ell(-5) & k = 5 \\
  \Q_\ell(-3) & k = 6 \\
  \end{cases}
\]
up to semi-simplification, where $\Q_\ell = \Q_\ell(0)$ is the trivial Galois representation, $\Q_\ell(1)$ is the $\ell$-adic cyclotomic character, and $\Q_\ell(-1)$ is its dual. For all $n \in \N$, $\Q_\ell(n)$ is the $n$th tensor power of $\Q_\ell(1)$ and $\Q_\ell(-n)$ is the $n$th tensor power of $\Q_\ell(-1)$. For all $n \in \Z$, $\Q_\ell(n) \cong \Q_\ell$ as a $\Q_\ell$-vector space. Denote $\Q_\ell(n)^{\oplus m}$ by $m \Q_\ell(n)$.
\end{thm}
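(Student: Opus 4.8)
The plan is to analyse the Leray spectral sequence of the projection $\pi\colon\mathcal X_2\to\mathcal A_2$. Since $\pi$ is an abelian scheme over the stack $\mathcal A_2$ it is smooth and proper, so by Deligne's degeneration theorem the spectral sequence collapses at $E_2$, and because the cohomology of an abelian scheme is the exterior algebra on $R^1\pi_*$ we have $R^q\pi_*\Q_\ell\cong\wedge^q\mathbf V$ with $\mathbf V:=R^1\pi_*\Q_\ell$ the standard rank-$4$ local system on $\mathcal A_2$, pure of weight $1$ and with monodromy group $\Sp_4$. Hence
\[
H^k(\mathcal X_2;\Q_\ell)\;=\;\bigoplus_{p+q=k}H^p\bigl(\mathcal A_2;\;\wedge^q\mathbf V\bigr),
\]
and everything reduces to computing $H^*(\mathcal A_2;\wedge^q\mathbf V)$ for $0\le q\le 4$. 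First I would decompose $\wedge^q\mathbf V$ into irreducible $\Sp_4$-local systems, carrying along the Tate twists forced by the weight: $\wedge^0\mathbf V=\Q_\ell$, $\wedge^1\mathbf V=\mathbf V=\mathbf V_{(1,0)}$, $\wedge^2\mathbf V=\mathbf V_{(1,1)}\oplus\Q_\ell(-1)$ (the trivial summand being the polarization form), $\wedge^3\mathbf V\cong\mathbf V_{(1,0)}(-1)$, and $\wedge^4\mathbf V=\Q_\ell(-2)$. So, up to twist, only the three local systems $\mathbf V_{(0,0)}$, $\mathbf V_{(1,0)}$, $\mathbf V_{(1,1)}$ ever occur.

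Two of the three are essentially free. For $\mathbf V_{(0,0)}=\Q_\ell$ I would use the known cohomology of $\mathcal A_2$: $H^0=\Q_\ell$, $H^2=\Q_\ell(-1)$, and $H^k=0$ otherwise (this can itself be recovered from the stratification used below). For $\mathbf V=\mathbf V_{(1,0)}$ the cohomology vanishes identically: the central element $-1\in\Sp_4(\Z)$ acts by $-1$ on $\mathbf V_\lambda$ whenever $|\lambda|$ is odd while acting trivially on $\mathcal A_2$, and a group acts trivially on its own cohomology, so $H^*(\mathcal A_2;\wedge^1\mathbf V)=H^*(\mathcal A_2;\wedge^3\mathbf V)=0$. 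Plugging these vanishings into the displayed formula pins down $H^k(\mathcal X_2;\Q_\ell)$ for all $k$ in terms of a single unknown: the $\mathbf V_{(1,1)}$-contribution to $H^k(\mathcal X_2)$ is exactly $H^{k-2}(\mathcal A_2;\mathbf V_{(1,1)})$ (coming from $q=2$), all other contributions being the explicit Tate pieces $\Q_\ell$, $2\Q_\ell(-1)$, $2\Q_\ell(-2)$, $\Q_\ell(-3)$ in degrees $0,2,4,6$.

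It remains to show that $H^*(\mathcal A_2;\mathbf V_{(1,1)})$ is $\Q_\ell(-5)$ in degree $3$ and $0$ in all other degrees. I would do this with compactly supported cohomology and the stratification $\mathcal A_2=\mathcal M_2\sqcup\mathcal A_{1,1}$ into the open locus of Jacobians (via Torelli) and the closed locus $\mathcal A_{1,1}\cong[\mathcal A_1\times\mathcal A_1/S_2]$ of decomposable surfaces. The key point is that the closed stratum contributes nothing: branching $\mathbf V_{(1,1)}|_{\mathcal A_{1,1}}$ under $\SL_2\times\SL_2$ gives $\mathcal L^-\oplus\mathcal W$, where $\mathcal L^-$ is a rank-one system on which the transposition acts by the sign character and $\mathcal W$ is the descent of $\mathbf V_1\boxtimes\mathbf V_2$; since $H^*(\mathcal A_1;\mathbf V_1)=0$ (the same $-1$-parity argument) and the sign-twisted cohomology of $\mathcal L^-$ has no $S_2$-invariants, $H^*_c(\mathcal A_{1,1};\mathbf V_{(1,1)})=0$, so the localization sequence gives $H^*_c(\mathcal A_2;\mathbf V_{(1,1)})\cong H^*_c(\mathcal M_2;\mathbf V_{(1,1)})$. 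For this last group I would either quote the known computation of the cohomology of symplectic local systems on $\mathcal M_2$ and $\mathcal A_2$ (Petersen; Hulek--Tommasi), or compute it from the hyperelliptic model of $\mathcal M_2$ as a $\mu_2$-gerbe over $[\,\mathrm{Conf}_6(\mathbb P^1)/(\mathrm{PGL}_2\rtimes S_6)\,]$, taking $S_6$-equivariant cohomology of the configuration space with coefficients in the local system cut out by the double cover; either way the answer is $\Q_\ell$ concentrated in degree $3$. Poincaré duality on the smooth $3$-dimensional stack $\mathcal A_2$, combined with $\mathbf V_{(1,1)}^\vee\cong\mathbf V_{(1,1)}(2)$, converts this into $H^3(\mathcal A_2;\mathbf V_{(1,1)})=\Q_\ell(-5)$ with vanishing elsewhere, and substituting back yields the theorem.

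The step I expect to be genuinely hard is precisely the last non-formal one --- determining $H^*_c(\mathcal M_2;\mathbf V_{(1,1)})$, equivalently $H^*(\mathcal A_2;\mathbf V_{(1,1)})$. This is where the arithmetic of Siegel modular forms enters: the absence of weight-$4$ Siegel cusp forms of genus $2$ forces the interior cohomology to vanish, so only a single Eisenstein/Tate class survives, and Poincaré duality fixes its Tate weight (this also explains why $H^5(\mathcal X_2)$ carries a $\Q_\ell(-5)$). Everything else --- the collapse of the spectral sequence, the branching rules, the $-1$-parity vanishings, and the localization sequence for the decomposable divisor --- is formal bookkeeping.
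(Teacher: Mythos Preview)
Your proposal is correct and follows the same overall architecture as the paper: apply Proposition~\ref{prop:leray} (Leray degenerates at $E_2$), decompose each $R^q\pi_*\Q_\ell$ into irreducible $\Sp_4$-local systems as in Lemma~\ref{lem:X2-local-systems}, kill the odd-weight pieces by the $-1$-parity argument, and read off the trivial-local-system contributions from Theorem~\ref{thm:a2}. The only nontrivial input is $H^*(\mathcal A_2;\mathbf V_{1,1})$.

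Where you diverge from the paper is in how you obtain that input. The paper simply invokes Petersen's Theorem~\ref{thm:petersen} as a black box and specializes it in Corollary~\ref{cor:petersen}, checking that the relevant cusp-form spaces $S_{0,4}$, $s_6$, etc.\ vanish. You instead propose to partially unwind Petersen's argument: use the Torelli stratification $\mathcal A_2=\mathcal M_2\sqcup\mathcal A_{1,1}$, verify by branching and K\"unneth that the decomposable locus contributes nothing to $H^*_c(-;\mathbf V_{1,1})$, and then either quote or recompute the $\mathcal M_2$ piece. This is exactly the route Petersen takes to prove his theorem in the first place, so you are not doing anything different at the level of mathematics, only at the level of citation depth. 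Your version has the advantage of making transparent \emph{why} only a single Tate class survives (no genus-$2$ cusp forms of weight $4$), at the cost of redoing work already packaged in \cite{petersen}; the paper's version is shorter and leverages that the general theorem is needed anyway for the later sections on $\mathcal X_2^n$ and $\mathcal X_2^{\Sym(n)}$.
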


Applying similar techniques gives the cohomology of the $n$th fiber product of $\mathcal X_2$ in low degrees. The following theorem applies these techniques to explicitly compute $H^k(\mathcal X_2^n; \Q_\ell)$ for $0 \leq k \leq 5$ for all $n \geq 1$. Here and in the rest of the paper, we use the convention that $\binom nm = 0$ if $n < m$. 
\begin{thm}\label{thm:low-degree-computation}
For all $n \geq 1$, the cohomology of the universal $n$th fiber product of abelian surfaces is
\begin{align*}
H^k(\mathcal X_2^n; \Q_\ell) &= \begin{cases}
  \Q_\ell & k = 0 \\
  0 & k = 1, 3 \\
  \left(\binom{n+1}{2} + 1\right) \Q_\ell(-1) & k = 2 \\
  \left(\frac{n(n+1)(n^2+n+2)}{8} + \binom{n+1}{2}\right)\Q_\ell(-2) & k = 4\\
  \binom{n+1}{2} \Q_\ell(-5) \oplus \binom n2 \Q_\ell(-4) & k = 5
\end{cases}
\end{align*}
up to semi-simplification.
\end{thm}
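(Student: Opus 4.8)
The plan is to use the Leray spectral sequence of the structure map $\pi\colon\mathcal X_2^n\to\mathcal A_2$,
\[
  E_2^{p,q}=H^p\bigl(\mathcal A_2;\,R^q\pi_*\Q_\ell\bigr)\;\Longrightarrow\;H^{p+q}(\mathcal X_2^n;\Q_\ell),
\]
and to read off its contributions in each total degree $\le 5$. Let $\mathbb V=R^1\pi_{0,*}\Q_\ell$ be the standard weight-$1$ rank-$4$ local system on $\mathcal A_2$ attached to the universal abelian surface $\pi_0\colon\mathcal X_2\to\mathcal A_2$ (it corresponds to the standard representation of $\GSp_4$). As the fibre of $\pi$ is $A^n$, the relative K\"unneth formula gives $R^q\pi_*\Q_\ell\cong\bigwedge^q(\mathbb V^{\oplus n})=\bigoplus_{q_1+\dots+q_n=q}\bigwedge^{q_1}\mathbb V\otimes\dots\otimes\bigwedge^{q_n}\mathbb V$. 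Since $\pi$ is smooth and projective --- each fibre $A^n$ carries the product polarization --- Deligne's degeneration theorem gives $E_2=E_\infty$, so $H^k(\mathcal X_2^n;\Q_\ell)=\bigoplus_{p+q=k}H^p(\mathcal A_2;R^q\pi_*\Q_\ell)$ up to semisimplification. The problem thus splits into (i) decomposing each $\bigwedge^q(\mathbb V^{\oplus n})$ into irreducible local systems $\mathbb V_\lambda$ with Tate twists, via $\bigwedge^0\mathbb V=\Q_\ell$, $\bigwedge^1\mathbb V=\mathbb V$, $\bigwedge^2\mathbb V=\Q_\ell(-1)\oplus\mathbb V_{1,1}$, $\bigwedge^3\mathbb V=\mathbb V(-1)$, $\bigwedge^4\mathbb V=\Q_\ell(-2)$ and the Clebsch--Gordan rules for $\Sp_4$; and (ii) inserting the groups $H^p(\mathcal A_2;\mathbb V_\lambda)$ for $p+q\le5$.

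For (ii) the key reduction is $H^*(\mathcal A_2;\mathbb V_\lambda)=0$ whenever $|\lambda|$ is odd (because $-\mathrm{id}\in\Sp_4(\Z)$ acts on $\mathbb V_\lambda$ by $(-1)^{|\lambda|}$), which kills the rows $q=1,3$ and the $\bigwedge^5$-part of $q=5$. For the even $\lambda$ with $|\lambda|\le4$ that occur, namely $(0,0),(1,1),(2,0),(2,2),(3,1),(4,0)$, many of the needed values are already forced by the $n=1$ case: running the same spectral sequence for $\mathcal X_2$ and comparing with Theorem~\ref{thm:universal-surface} yields $H^i(\mathcal A_2;\Q_\ell)=\Q_\ell,0,\Q_\ell(-1),0,0,0$ for $i=0,\dots,5$, and $H^1(\mathcal A_2;\mathbb V_{1,1})=H^2(\mathcal A_2;\mathbb V_{1,1})=0$, $H^3(\mathcal A_2;\mathbb V_{1,1})=\Q_\ell(-5)$. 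The remaining inputs, which I would take from the computation of the cohomology of local systems on $\mathcal A_2$, are $H^1(\mathcal A_2;\mathbb V_\lambda)=0$ for the nonzero $\lambda$ above, $H^2(\mathcal A_2;\mathbb V_{2,0})=0$, and $H^3(\mathcal A_2;\mathbb V_{2,0})=\Q_\ell(-4)$.

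Assembling degree by degree then needs only a few $\Sp_4$-multiplicities inside $\bigwedge^q(\mathbb V^{\oplus n})$ for $q\le4$. For $k=0,1,3$ only $E_2^{0,0}=\Q_\ell$ survives. For $k=2$ the survivors are $E_2^{2,0}=\Q_\ell(-1)$ and $E_2^{0,2}=H^0(\mathcal A_2;\bigwedge^2(\mathbb V^{\oplus n}))$; as $\bigwedge^2(\mathbb V^{\oplus n})$ is pure of weight $2$, the latter is $m_2\,\Q_\ell(-1)$ with $m_2$ the number of trivial summands, and $m_2=n+\binom n2=\binom{n+1}{2}$ (they are spanned by the pairings $\omega_{ij}$, $1\le i\le j\le n$). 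For $k=4$ the survivors are $E_2^{2,2}$, contributing $\binom{n+1}{2}\Q_\ell(-2)$ (the $\Q_\ell(-1)$-summands of $\bigwedge^2(\mathbb V^{\oplus n})$ twisted by $H^2(\mathcal A_2;\Q_\ell)=\Q_\ell(-1)$, the $\mathbb V_{1,1}$- and $\mathbb V_{2,0}$-summands dropping out since their $H^2$ vanishes), and $E_2^{0,4}$, contributing $m_4\,\Q_\ell(-2)$ with $m_4$ the number of trivial summands of $\bigwedge^4(\mathbb V^{\oplus n})$. Here I expect $m_4=\dim\Sym^2\!\bigl(\C^{\binom{n+1}{2}}\bigr)=\tfrac12\binom{n+1}{2}\bigl(\binom{n+1}{2}+1\bigr)=\frac{n(n+1)(n^2+n+2)}{8}$, because the $\Sp_4$-invariants of $\bigwedge^\bullet(\mathbb V^{\oplus n})$ are generated by the $\omega_{ij}$ and their first relations (the Pfaffian-type syzygies for $\Sp_4$) appear only in exterior degree $6$, so the degree-$4$ invariants are exactly the symmetric square of the degree-$2$ ones. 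For $k=5$ the only survivor is $E_2^{3,2}=H^3(\mathcal A_2;\bigwedge^2(\mathbb V^{\oplus n}))$ ($E_2^{1,4}=0$ by the $H^1$-vanishing above), and from $\bigwedge^2(\mathbb V^{\oplus n})\cong\binom{n+1}{2}\Q_\ell(-1)\oplus\binom{n+1}{2}\mathbb V_{1,1}\oplus\binom n2\mathbb V_{2,0}$ (the $\mathbb V_{1,1}$- and $\mathbb V_{2,0}$-multiplicities being $\binom{n+1}{2}$ and $\binom n2$) together with $H^3(\mathcal A_2;\Q_\ell)=0$ this equals $\binom{n+1}{2}H^3(\mathcal A_2;\mathbb V_{1,1})\oplus\binom n2H^3(\mathcal A_2;\mathbb V_{2,0})=\binom{n+1}{2}\Q_\ell(-5)\oplus\binom n2\Q_\ell(-4)$.

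I expect the representation-theoretic bookkeeping to be the main obstacle --- in particular, establishing that $\bigl(\bigwedge^4(\mathbb V^{\oplus n})\bigr)^{\Sp_4}=\Sym^2\!\bigl(\bigl(\bigwedge^2(\mathbb V^{\oplus n})\bigr)^{\Sp_4}\bigr)$, i.e.\ that the $\omega_{ij}$ satisfy no relation in exterior degree $4$, which is what makes $m_4$ the clean quartic $\frac{n(n+1)(n^2+n+2)}{8}$; and, secondarily, pinning down the $\mathbb V_{1,1}$- and $\mathbb V_{2,0}$-multiplicities in $\bigwedge^2(\mathbb V^{\oplus n})$ and verifying the vanishing of the off-row groups $H^1(\mathcal A_2;\mathbb V_\lambda)$ and $H^2(\mathcal A_2;\mathbb V_{1,1}),H^2(\mathcal A_2;\mathbb V_{2,0})$, where one leans on the explicit cohomology of local systems on $\mathcal A_2$.
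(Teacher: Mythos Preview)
Your approach is essentially the same as the paper's: both use the Leray spectral sequence for $\pi\colon\mathcal X_2^n\to\mathcal A_2$, invoke Deligne's degeneration, decompose $R^q\pi_*\Q_\ell$ into irreducible symplectic local systems with Tate twists, and then feed in Petersen's values $H^p(\mathcal A_2;\mathbf V_\lambda)$ (you partially recover these from the $n=1$ case, but ultimately both routes rest on the same input).

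The only genuine difference is in how the multiplicities $m_{a,b}(H^q(A^n))$ are obtained. The paper sets up a recursive formula in $n$ (its Proposition~\ref{prop:recursive_mult}) via K\"unneth and the Clebsch--Gordan rules, and then proves each needed multiplicity by induction (Lemma~\ref{lem:small-cases-mult}). You instead argue structurally: the degree-$2$ invariants of $\bigwedge^\bullet(\mathbb V^{\oplus n})$ are the $\binom{n+1}{2}$ pairings $\omega_{ij}$, and the degree-$4$ invariants are their symmetric square because the Pfaffian relations for $\Sp_4$ first appear in exterior degree $6$. Your argument is slicker for this particular count and explains the closed form $\tfrac12\binom{n+1}{2}\bigl(\binom{n+1}{2}+1\bigr)$; the paper's recursion is more mechanical but extends uniformly to higher degrees and to the other multiplicities needed later. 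The ``no relations in degree $4$'' claim you flag as the main obstacle is indeed the crux of your variant, and it does require justification (e.g.\ via the second fundamental theorem for $\Sp_4$, or simply by matching dimensions against the paper's inductive count); the paper sidesteps this by never invoking the ring structure of the invariants.
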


The cohomology of the universal $n$th symmetric power of abelian surfaces \emph{stabilizes} as $n$ increases, by which we mean that the cohomology $H^k(\mathcal X^{\Sym(n)}; \Q_\ell)$ is independent of $n$ for $n$ large enough compared to the degree $k$. As with the cohomology of the $n$th fiber product of $\mathcal X_2$, we explicitly compute $H^k(\mathcal X_2^{\Sym(n)};\Q_\ell)$ for $0\leq k \leq 5$ and all $n$ large enough compared to $k$.
\begin{thm}\label{thm:low-degree-sym-n}
For all $n \geq k$ for $k$ even and for all $n \geq k-1$ for $k$ odd,
\[
  H^k(\mathcal X_2^{\Sym(n)}; \Q_\ell) = \begin{cases}
  \Q_\ell & k = 0 \\
  0 & k = 1, 3 \\
  3\Q_\ell(-1) & k = 2 \\
  9\Q_\ell(-2) & k = 4 \\
  2\Q_\ell(-5) & k = 5
  \end{cases}
\]
up to semi-simplification.
\end{thm}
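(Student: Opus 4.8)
The plan is to reduce to the $S_n$-equivariant version of the computation behind Theorem~\ref{thm:low-degree-computation}. Since $\mathcal X_2^{\Sym(n)} = \mathcal X_2^n/S_n$ and $\Q_\ell$ has characteristic zero, the transfer isomorphism gives
\[
  H^k(\mathcal X_2^{\Sym(n)}; \Q_\ell) \cong H^k(\mathcal X_2^n; \Q_\ell)^{S_n}
\]
for all $k$, with $S_n$ permuting the factors of $\mathcal X_2^n$. The Leray spectral sequence of the smooth projective morphism $\pi^n\colon\mathcal X_2^n\to\mathcal A_2$ degenerates at $E_2$ (Deligne; this is already used in the proof of Theorem~\ref{thm:low-degree-computation}), and degeneration is preserved by the exact functor $(-)^{S_n}$, so up to semisimplification (so that surviving extensions are irrelevant)
\[
  H^k(\mathcal X_2^{\Sym(n)};\Q_\ell) = \bigoplus_{p+q=k} H^p\bigl(\mathcal A_2; (R^q\pi^n_*\Q_\ell)^{S_n}\bigr).
\]
It therefore suffices to identify the local systems $(R^q\pi^n_*\Q_\ell)^{S_n}$ for $q\leq 5$ and read off their cohomology on $\mathcal A_2$.

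Write $\mathbf V = R^1\pi_*\Q_\ell$ for the standard weight-one rank-$4$ local system on $\mathcal A_2$. By Künneth, $R^q\pi^n_*\Q_\ell = \bigoplus_{i_1+\cdots+i_n=q}\bigwedge^{i_1}\mathbf V\otimes\cdots\otimes\bigwedge^{i_n}\mathbf V$, with $S_n$ acting by the signed permutation of the tensor factors (Koszul rule: a transposition of two coinciding odd-degree factors acts by $-1$ times the swap). Hence $(R^q\pi^n_*\Q_\ell)^{S_n}$ is the degree-$q$ graded piece of the graded-symmetric power $\Sym^n(\bigwedge^\bullet\mathbf V) = \bigoplus_{a+b=n}\Sym^a(\mathbf E)\otimes\bigwedge^b(\mathbf O)$, where $\mathbf E = \Q_\ell\oplus\bigwedge^2\mathbf V\oplus\bigwedge^4\mathbf V$ lies in degrees $0,2,4$ and $\mathbf O = \mathbf V\oplus\bigwedge^3\mathbf V$ in degrees $1,3$. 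Since $\bigwedge^b(\mathbf O)$ has minimal degree $b$, and enlarging a symmetric power by copies of $\Q_\ell$ leaves a fixed graded piece unchanged, the degree-$q$ part of $\Sym^n(\bigwedge^\bullet\mathbf V)$ is independent of $n$ for $n\geq q$. I would then make the pieces of degree $\leq 4$ explicit using $\bigwedge^2\mathbf V = \Q_\ell(-1)\oplus\mathbf V_{(1,1)}$, $\bigwedge^3\mathbf V = \mathbf V(-1)$, $\bigwedge^4\mathbf V = \Q_\ell(-2)$ together with the standard $\Sp_4$-plethysms for $\Sym^2$ and $\bigwedge^2$ of $\mathbf V$ and of $\mathbf V_{(1,1)}$.

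Finally I would feed in the values of $H^\bullet(\mathcal A_2;\mathbf V_\lambda)$ for small $\lambda$ that already enter the proofs of Theorems~\ref{thm:universal-surface} and~\ref{thm:low-degree-computation}: $H^\bullet(\mathcal A_2;\Q_\ell)$ is $\Q_\ell$ in degree $0$ and $\Q_\ell(-1)$ in degree $2$, and zero otherwise; $H^\bullet(\mathcal A_2;\mathbf V_\lambda) = 0$ for $|\lambda|$ odd; $H^0(\mathcal A_2;\mathbf V_\lambda) = 0$ for $\lambda\neq 0$; $H^\bullet(\mathcal A_2;\mathbf V_{(1,1)})$ is $\Q_\ell(-5)$ concentrated in degree $3$; and $\mathbf V_{(2,0)},\mathbf V_{(2,2)}$ are acyclic in degrees $\leq 1$. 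Because $R^q\pi^n_*\Q_\ell$ is a sum of local systems $\mathbf V_\lambda(-j)$ with $|\lambda|\equiv q\pmod 2$, the odd-$|\lambda|$ vanishing forces every $q$-odd term to drop out, so only even $q\leq k$ survive; combined with the stabilization above, the threshold for degree $k$ is $n\geq k$ when $k$ is even and $n\geq k-1$ when $k$ is odd. Summing $\bigoplus_{p+q=k}H^p(\mathcal A_2;(R^q\pi^n_*\Q_\ell)^{S_n})$ for $k\leq 5$ and $n$ in that range produces the table; for instance, in degree $4$ one gets $7\Q_\ell(-2)$ from $H^0$ of the $\Q_\ell(-2)$-isotypic part of $(R^4\pi^n_*\Q_\ell)^{S_n}$ and $2\Q_\ell(-2)$ from $H^2(\mathcal A_2; 2\bigwedge^2\mathbf V)$, for a total of $9\Q_\ell(-2)$.

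The step I expect to be the main obstacle is the graded-symmetric plethysm: one must keep the Koszul signs straight — so that, e.g., the multidegree $(1,1,0,\dots,0)$ yields $\bigwedge^2\mathbf V$ rather than $\Sym^2\mathbf V$ — decompose each low-degree graded piece of $\Sym^n(\bigwedge^\bullet\mathbf V)$ into irreducible $\Sp_4$-local systems, and check that every summand whose appearance would need $n$ beyond the stated bound lies in an odd-$|\lambda|$ (hence $\mathcal A_2$-acyclic) local system. Granting that, the rest is a finite bookkeeping of contributions that have already been computed.
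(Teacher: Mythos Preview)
Your proposal is correct and is essentially the paper's own argument: the paper likewise takes $S_n$-invariants in the Leray spectral sequence (Proposition~\ref{prop:leray}), identifies $(R^q\pi^n_*\Q_\ell)^{S_n}\cong H^q(\Sym^nA;\Q_\ell)$ via exactly your graded-symmetric formula (Lemma~\ref{lem:sn-fixed}), records the resulting low-degree decompositions and their stabilization in $n$ (Lemma~5.2 and Proposition~\ref{prop:example-sym-n}), and then reads off the answer from Corollary~\ref{cor:petersen}. Your Koszul-sign remark is precisely why $\mathbf V_{2,0}$ is absent from $H^2(\Sym^nA)$, so that the $\binom n2\Q_\ell(-4)$ term of Theorem~\ref{thm:low-degree-computation} disappears in degree $5$.
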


The proofs of these theorems use the Leray spectral sequence of the morphisms $\pi: \mathcal X \to \mathcal A_2$, with $\mathcal X = \mathcal X_2$, $\mathcal X_2^n$, and $\mathcal X_2^{\Sym(n)}$ respectively. The spectral sequence takes as input the cohomology of local systems of $\mathcal A_2$, which has been computed by Petersen in \cite{petersen}. Then it still remains to determine the local systems involved in the latter two cases, converting this problem about cohomology into a series of problems about the representation theory of $\Sp(4)$. For $\mathcal X_2^n$, we give recursive formulas (in $n$) for the relevant local systems in Subsection \ref{sec:fiber-powers-n}. For $\mathcal X_2^{\Sym(n)}$, we show that the local systems $R^k\pi_*\Q_\ell$ stabilize for $n \geq k$. In both cases, we use these facts to prove Theorems \ref{thm:low-degree-computation} and \ref{thm:low-degree-sym-n}. 

The cohomology in higher degrees is quite involved to determine for general $n\geq 0$ and $k \geq 0$ and involve Galois representations attached to certain (Siegel) modular forms. However, the methods of this paper give a finite computation for the relevant local systems for each fixed $n$. This means that given enough information about the inputs to Petersen's theorem (\cite[Theorem 2.1]{petersen}), it is possible to compute the cohomology groups for larger values $k$ and $n$ using the results of this paper. We work out the case $n = 2$ completely -- see Theorems \ref{thm:2-power-universal-surface} and \ref{thm:sym-2} for the cohomology of $\mathcal X_2^2$ and $\mathcal X_2^{\Sym(2)}$ respectively.

\bigskip
\noindent
{\bf{Arithmetic Statistics.}}
We fix throughout a finite field $\F_q$. The Weil conjectures give bounds on the number of $\F_q$-points on any projective variety over $\F_q$. Applied to an abelian surface $A$ they assert that
\[
  \#A(\F_q) = q^2 + a_3q^{3/2} + a_2q + a_1q^{1/2} + 1
\]
where $a_i$ are some sums of $n_i$ roots of unity with $n_i = 4, 6, 4$ for $i = 1, 2, 3$ respectively. A simple corollary is
\[
  \lvert \#A(\F_q) - (q^2 + 1) \rvert \leq 4q^{3/2} + 6q + 4q^{1/2},
\]
constraining the possible values that $\#A(\F_q)$ can take. The exact set of possible values of $\#A(\F_q)$ is given by \emph{Honda--Tate theory}, which yields a bijection between isogeny classes of simple abelian varieties (of all dimensions) over $\F_q$ and Weil $q$-polynomials. In particular, for a Weil $q$-polynomial $f$, there is some abelian variety $V$ such that the characteristic polynomial $f_V$ of the Frobenius endomorphism of $V$ is given by $f_V = f^e$ for some $e \geq 1$, for which $\# V(\F_q) = f_V(1)$. While the restriction of this bijection to simple abelian surfaces is known, we omit it for brevity and refer the reader to \cite{ruck}, \cite{waterhouse} and \cite[Section 2]{dgssst} for an overview.

Studying the counts $\# \mathcal X_2^n(\F_q)$ and $\# \mathcal X_2^{\Sym(n)}(\F_q)$ for $n \geq 1$ will give more information about the distribution of $\#A(\F_q)$ as the abelian surface $A$ varies over $\mathcal A_2(\F_q)$. Our main tool to obtain these point counts is the Grothendieck--Lefschetz--Behrend trace formula (\cite[Theorem 3.1.2]{behrend-inventiones}). A first observation through standard applications of the Weil conjectures and the trace formula is that $\#\mathcal X_2^n(\F_q) = q^d + O(q^{d - \frac 12})$ where $d = \dim \mathcal X_2^n$, because $\mathcal X_2^n$ is finitely covered by a smooth, irreducible, quasiprojective variety. However, applying the trace formula to our cohomological theorems immediately gives more precise asymptotics for $\#\mathcal X_2^n(\F_q)$ as well as new arithmetic statistics about the  number of $\F_q$-points of abelian surfaces. Below, we consider expected values of random variables on $\mathcal A_2(\F_q)$ by giving $\mathcal A_2(\F_q)$ a natural probability measure where each isomorphism class of an abelian surface $A$ has probability inversely proportional to the size of its $\F_q$-automorphism group; see Lemma \ref{lem:prob-defns} for more details.
\begin{cor}\label{cor:avg-fq-points}
The expected number of $\F_q$-points on abelian surfaces defined over $\F_q$ is
\[
  \mathbf E[\#A(\F_q)] = q^2 + q + 1 - \frac{1}{q^3 + q^2}.
\]
\end{cor}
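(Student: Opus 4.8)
The plan is to write $\mathbf E[\#A(\F_q)]$ as a ratio of stacky point counts and to evaluate the numerator and denominator by the Grothendieck--Lefschetz--Behrend trace formula (\cite[Theorem 3.1.2]{behrend-inventiones}) applied to the cohomology of Theorem~\ref{thm:universal-surface}. First I would unwind the probability measure of Lemma~\ref{lem:prob-defns}: weighting each isomorphism class $[A]$ by $1/\#\Aut(A)$ gives
\[
  \mathbf E[\#A(\F_q)] \;=\; \frac{\sum_{[A]} \#A(\F_q)/\#\Aut(A)}{\sum_{[A]} 1/\#\Aut(A)} \;=\; \frac{\#\mathcal X_2(\F_q)}{\#\mathcal A_2(\F_q)},
\]
where $\#\mathcal X_2(\F_q)$ and $\#\mathcal A_2(\F_q)$ are groupoid cardinalities. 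The only real content here is that the numerator is the groupoid count of $\mathcal X_2(\F_q)$: an isomorphism class of $\F_q$-points of $\mathcal X_2$ lying over $A$ is an orbit of $\Aut(A)$ acting on $A(\F_q)$, with automorphism group the corresponding stabilizer, so that orbit--stabilizer yields $\sum_{\mathcal O \subseteq A(\F_q)} 1/\#\Stab(x_{\mathcal O}) = \#A(\F_q)/\#\Aut(A)$.

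Next I would evaluate the two counts. For the denominator, $\#\mathcal A_2(\F_q) = q^3 + q^2$; this is classical and also follows from the trace formula together with Petersen's computation of $H^*(\mathcal A_2; \Q_\ell)$ and Poincaré duality. For the numerator I pass from Theorem~\ref{thm:universal-surface} to compactly supported cohomology: since $\mathcal X_2$ is a smooth Deligne--Mumford stack of dimension $5$, Poincaré duality gives $H^k_c(\mathcal X_2; \Q_\ell) \cong H^{10-k}(\mathcal X_2; \Q_\ell)^\vee(-5)$ up to semisimplification. Using the vanishing statements of Theorem~\ref{thm:universal-surface} (in particular $H^1 = H^3 = H^7 = 0$), this reads $H^{10}_c = \Q_\ell(-5)$, $H^8_c = 2\Q_\ell(-4)$, $H^6_c = 2\Q_\ell(-3)$, $H^5_c = \Q_\ell$, $H^4_c = \Q_\ell(-2)$, with all other $H^k_c$ vanishing. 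The trace formula then yields
\[
  \#\mathcal X_2(\F_q) \;=\; \sum_k (-1)^k \tr\!\left(\Frob_q \mid H^k_c(\mathcal X_2; \Q_\ell)\right) \;=\; q^5 + 2q^4 + 2q^3 + q^2 - 1.
\]

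Finally I would combine the two counts: factoring $q^5 + 2q^4 + 2q^3 + q^2 - 1 = (q^3 + q^2)(q^2 + q + 1) - 1$ and dividing by $\#\mathcal A_2(\F_q) = q^3 + q^2$ produces exactly $q^2 + q + 1 - 1/(q^3 + q^2)$, as claimed. Given Theorem~\ref{thm:universal-surface}, no single step is a serious obstacle; the argument is essentially bookkeeping. The points that require a little care are (i) that the trace formula genuinely applies in this non-proper stacky setting — it does, since $\mathcal X_2$ and $\mathcal A_2$ are finite-type Deligne--Mumford stacks with finite-dimensional compactly supported cohomology; (ii) tracking the Tate twists correctly under Poincaré duality; and (iii) the orbit--stabilizer identity that identifies the weighted average with the ratio $\#\mathcal X_2(\F_q)/\#\mathcal A_2(\F_q)$.
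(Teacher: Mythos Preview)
Your proposal is correct and follows essentially the same route as the paper: express the expectation as $\#\mathcal X_2(\F_q)/\#\mathcal A_2(\F_q)$ via orbit--stabilizer (this is exactly Lemma~\ref{lem:prob-defns}), evaluate both groupoid counts by the trace formula, and divide. The only cosmetic difference is that you pass through Poincar\'e duality to convert Theorem~\ref{thm:universal-surface} into compactly supported cohomology before applying the trace formula with geometric Frobenius, whereas the paper applies Behrend's formulation (Theorem~\ref{thm:glb-trace}) directly to ordinary cohomology with the arithmetic Frobenius $\Phi_q$ and the prefactor $q^{\dim\mathcal X}$; these two computations are of course equivalent and yield the same $\#\mathcal X_2(\F_q)=q^5+2q^4+2q^3+q^2-1$. (Minor point: the cohomology of $\mathcal A_2$ you cite is Theorem~\ref{thm:a2}, due to Lee--Weintraub and van~der~Geer, rather than Petersen.)
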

For each prime power $q > 0$, there is a simple abelian surface $A_q$ over $\F_q$ with $\#A_q(\F_q) = q^2 + q + 1$ by the Honda--Tate correspondence for surfaces (\cite[Theorem 1.1]{ruck}) which corresponds to the case $a_3 = a_1 = 0$, $a_2 = 1$ of the Weil conjectures. Although $\mathbf E[\#A(\F_q)]$ is not realized by an abelian surface for any fixed $q$, the minimal difference between the expected value and the $\F_q$-point count of an arbitrary abelian surface goes to $0$ as $q$ increases, i.e. 
\[
  \min_{[A] \in \mathcal A_2(\F_q)} \lvert\#A(\F_q) - \mathbf E[\#A(\F_q)]\rvert \to 0 \quad\text{ as } q \to \infty.
\]

For any abelian surface $A$ and $n > 1$, $A^n(\F_q)$ denotes the set of $\F_q$-points of the $n$th power $A^n$ of $A$, which are ordered $n$-tuples of (not necessarily distinct) $\F_q$-points of $A$. The trace formula is also used to compute the exact expected value of $\#A^2(\F_q)$ and an asymptotic estimate for the expected value of $\#A^n(\F_q)$ for $n > 2$. Because $\#A^n(\F_q) = (\#A(\F_q))^n$ for any abelian surface $A$, the following corollary gives the exact second moment of the number of $\F_q$-points on abelian surfaces and asymptotic estimates on the $n$th moment for all $n \geq 3$. 
\begin{cor}\label{cor:nth-fiber-asymptotic-in-q}
The expected value of $\#A^2(\F_q)$ is
\[
  \mathbf E[\#A^2(\F_q)] = q^4 + 3q^3 + 6q^2 + 3q - \frac{5q^2 + 5q + 3}{q^3 + q^2}
\]
and for all $n \geq 1$, 
\[
  \mathbf E[\# A^n(\F_q)] = q^{2n} + \binom{n+1}{2} q^{2n-1} + \left(\frac{n(n+1)(n^2+n+2)}{8}\right) q^{2n-2} + O(q^{2n-3}).
\]
\end{cor}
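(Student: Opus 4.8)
The plan is to express the expectation as a stacky point count of $\mathcal X_2^n$, evaluate that count with the Grothendieck--Lefschetz--Behrend trace formula using the cohomology computed above, and divide by $\#\mathcal A_2(\F_q)$. First I would use the description of the probability measure on $\mathcal A_2(\F_q)$ in Lemma \ref{lem:prob-defns} together with the orbit--stabilizer theorem: an object of the groupoid $\mathcal X_2^n(\F_q)$ is a tuple $(A, x_1, \dots, x_n)$ whose automorphism group is the stabilizer of $(x_1, \dots, x_n)$ inside $\Aut_{\F_q}(A)$, so summing reciprocals of automorphism orders over a fixed $A$ gives $\#A^n(\F_q)/\#\Aut_{\F_q}(A)$, and therefore
\[
  \mathbf E[\#A^n(\F_q)] \;=\; \frac{1}{\#\mathcal A_2(\F_q)} \sum_{[A] \in \mathcal A_2(\F_q)} \frac{\#A^n(\F_q)}{\#\Aut_{\F_q}(A)} \;=\; \frac{\#\mathcal X_2^n(\F_q)}{\#\mathcal A_2(\F_q)}.
\]
Here $\#\mathcal A_2(\F_q) = q^3 + q^2$ (classical, and in any case forced by Corollary \ref{cor:avg-fq-points}). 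Since $\mathcal X_2^n$ is a smooth Deligne--Mumford stack of dimension $d = 2n+3$, Poincar\'e duality gives $H^k_c(\mathcal X_2^n; \Q_\ell) \cong H^{2d-k}(\mathcal X_2^n; \Q_\ell)^\vee(-d)$, so a summand $\Q_\ell(-a) \subseteq H^j(\mathcal X_2^n; \Q_\ell)$ contributes exactly $(-1)^j q^{d-a}$ to $\#\mathcal X_2^n(\F_q) = \sum_k (-1)^k \tr(\Frob_q \mid H^k_c(\mathcal X_2^n; \Q_\ell))$.

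For the exact value at $n = 2$ the remaining step is mechanical: I would feed the full cohomology of $\mathcal X_2^2$ from Theorem \ref{thm:2-power-universal-surface} through Poincar\'e duality (now $d = 7$) and the trace formula to obtain $\#\mathcal X_2^2(\F_q)$ as an explicit polynomial in $q$, then divide by $q^3 + q^2$ and simplify, arriving at $q^4 + 3q^3 + 6q^2 + 3q - \frac{5q^2 + 5q + 3}{q^3 + q^2}$.

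For the asymptotic with general $n$ the key ingredient is a weight bound. The stack $\mathcal X_2^n$ admits a finite \'etale cover by the $n$-fold fibre power of the universal abelian surface over $\mathcal A_2[N]$ for $N \geq 3$, a smooth quasi-projective variety, so $H^j(\mathcal X_2^n; \Q_\ell)$ is a Galois-stable direct summand of the cohomology of a smooth variety and hence has weights $\geq j$; thus every summand $\Q_\ell(-a)$ of $H^j$ has $a \geq j/2$. Consequently the coefficients of $q^{2n+3}$, $q^{2n+2}$, $q^{2n+1}$ in $\#\mathcal X_2^n(\F_q)$ receive no contribution from $H^{\geq 5}$, and since $H^1 = H^3 = 0$ they are read off from $H^0$, $H^2$, $H^4$; by Theorem \ref{thm:low-degree-computation} they equal $1$, $\binom{n+1}{2} + 1$, and $\frac{n(n+1)(n^2+n+2)}{8} + \binom{n+1}{2}$. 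Writing $\#\mathcal X_2^n(\F_q) = Q(q)(q^3+q^2) + R(q)$ with $\deg R \leq 2$ gives $\mathbf E[\#A^n(\F_q)] = Q(q) + O(q^{-1})$, and matching the three top coefficients of $Q$ against the three values above identifies $Q(q) = q^{2n} + \binom{n+1}{2} q^{2n-1} + \frac{n(n+1)(n^2+n+2)}{8} q^{2n-2} + (\text{lower order})$, which is the claimed asymptotic.

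The only genuine difficulty in this corollary is, I expect, bookkeeping: keeping the Tate twists straight through Poincar\'e duality for stacks, and invoking the weight bound to confirm that none of the higher cohomology (undetermined for general $n$) affects the three leading coefficients of the point count. The real content lies upstream, in Theorems \ref{thm:low-degree-computation} and \ref{thm:2-power-universal-surface}; sharpening the asymptotic by one more term would require the multiplicity of $\Q_\ell(-3)$ in $H^6(\mathcal X_2^n; \Q_\ell)$, which is not computed here for general $n$.
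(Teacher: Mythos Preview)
Your approach is correct and matches the paper's: reduce to $\#\mathcal X_2^n(\F_q)/\#\mathcal A_2(\F_q)$ via Lemma~\ref{lem:prob-defns}, evaluate the numerator by the trace formula (Theorem~\ref{thm:glb-trace}) using Theorems~\ref{thm:low-degree-computation} and~\ref{thm:2-power-universal-surface}, and bound the tail with Deligne's weight estimates. The only cosmetic differences are that the paper applies Behrend's formula directly (avoiding your Poincar\'e duality detour) and phrases the weight bound on the $E_2$-page via Theorem~\ref{thm:weights}; note also that to reach the stated $O(q^{2n-3})$ error you must include the explicit $H^5$ from Theorem~\ref{thm:low-degree-computation} in the computed sum, since the weight bound alone on degree $5$ yields only $O(q^{2n+1/2})$ in the point count before division.
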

Note that computing the $n$th moment for large $n$ involves representations attached to (Siegel) modular forms; therefore, the recursive formulas for local systems in the cohomological computations do not completely determine these moments. However for fixed $n \geq 0$, the trace formula does give the exact $n$th moment of $\# A(\F_q)$ if $H^k(\mathcal X^n; \Q_\ell)$ for all $k \geq 0$ are known.

For any abelian surface $A$ and $n > 1$, $\Sym^nA(\F_q)$ denotes the set of $\F_q$-points of the symmetric power $\Sym^n A$ of an abelian surface $A$, which are the unordered $n$-tuples of (not necessarily distinct) $\overline\F_q$-points of $A$ defined as an $n$-tuple over $\F_q$. This means that the $n$-tuple contains all Galois conjugates of each point of the $n$-tuple. The same methods give the exact expected value of $\#\Sym^2 A(\F_q)$ and an asymptotic estimate for the expected value of $\#\Sym^n A(\F_q)$ for $n > 2$. 
\begin{cor}\label{cor:sym-asymptotic-in-q}
The expected value of $\#\Sym^n A(\F_q)$ for $n = 2$ is 
\[
  \mathbf E[\#\Sym^2 A(\F_q)] = \frac{\#\mathcal X_2^{\Sym(2)}(\F_q)}{\#\mathcal A_2(\F_q)}= q^4 + 2q^3 + 4q^2 + 2q + 1 - \frac{3q^2 + 2q + 2}{q^3 + q^2}.
\]
For $n = 3$,
\[
  \mathbf E[\#\Sym^3 A(\F_q)] = \frac{\#\mathcal X_2^{\Sym(3)}(\F_q)}{\#\mathcal A_2(\F_q)} = q^6 + 2q^5 + O(q^4)
\]
and for all $n \geq 4$,
\[
  \mathbf E[\#\Sym^nA(\F_q)] =\frac{\#\mathcal X_2^{\Sym(n)}(\F_q)}{\#\mathcal A_2(\F_q)}= q^{2n} + 2q^{2n-1} + 7q^{2n-2} + O(q^{2n-3}).
\]
\end{cor}

Because Corollary \ref{cor:nth-fiber-asymptotic-in-q} gives the exact second moment of $\#A(\F_q)$, we also obtain the variance:
\begin{cor}\label{cor:variance}
The variance of $\#A(\F_q)$ is
\[
  \Var(\#A(\F_q)) = q^3 + 3q^2 + q - 1 - \frac{3q^2 + 3q + 1}{q^3+ q^2} - \frac{1}{(q^3 + q^2)^2}.
\]
\end{cor}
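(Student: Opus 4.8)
The plan is to reduce the claim to the elementary identity $\Var(\#A(\F_q)) = \mathbf E[\#A(\F_q)^2] - \mathbf E[\#A(\F_q)]^2$, valid for the probability measure on $\mathcal A_2(\F_q)$ of Lemma \ref{lem:prob-defns}. Since the $\F_q$-points of $A^2$ are ordered pairs of $\F_q$-points of $A$, we have $\#A^2(\F_q) = (\#A(\F_q))^2$ for every abelian surface $A/\F_q$, so the second moment $\mathbf E[\#A(\F_q)^2]$ equals $\mathbf E[\#A^2(\F_q)]$, which Corollary \ref{cor:nth-fiber-asymptotic-in-q} computes exactly; the first moment is Corollary \ref{cor:avg-fq-points}. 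Thus the only work left is an algebraic simplification.

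First I would substitute the two inputs $\mathbf E[\#A(\F_q)] = q^2+q+1-\tfrac{1}{q^3+q^2}$ and $\mathbf E[\#A^2(\F_q)] = q^4+3q^3+6q^2+3q-\tfrac{5q^2+5q+3}{q^3+q^2}$. Writing $q^3+q^2 = q^2(q+1)$ and expanding,
\[
  \mathbf E[\#A(\F_q)]^2 = (q^2+q+1)^2 - \frac{2(q^2+q+1)}{q^3+q^2} + \frac{1}{(q^3+q^2)^2},
\]
with $(q^2+q+1)^2 = q^4+2q^3+3q^2+2q+1$ and $2(q^2+q+1) = 2q^2+2q+2$. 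Subtracting this from $\mathbf E[\#A^2(\F_q)]$, and collecting the polynomial terms, the $\tfrac{1}{q^3+q^2}$-terms, and the $\tfrac{1}{(q^3+q^2)^2}$-term separately, gives
\[
  \Var(\#A(\F_q)) = q^3+3q^2+q-1 - \frac{3q^2+3q+1}{q^3+q^2} - \frac{1}{(q^3+q^2)^2},
\]
as claimed.

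There is no real obstacle here: all of the cohomological and trace-formula input has already been used to prove Corollaries \ref{cor:avg-fq-points} and \ref{cor:nth-fiber-asymptotic-in-q}, and what remains is bookkeeping. The one point I would verify carefully is that these two corollaries use the same probability measure on $\mathcal A_2(\F_q)$---the groupoid count weighting each isomorphism class by $1/\#\Aut_{\F_q}(A)$---so that the variance identity applies without modification; this is clear from the setup preceding Corollary \ref{cor:avg-fq-points}.
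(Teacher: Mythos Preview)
Your proposal is correct and follows exactly the same approach as the paper: the paper's own proof is the single line $\Var(\#A(\F_q)) = \mathbf E[\#A^2(\F_q)] - (\mathbf E[\#A(\F_q)])^2$, invoking Corollaries~\ref{cor:avg-fq-points} and~\ref{cor:nth-fiber-asymptotic-in-q} and leaving the algebra implicit. Your write-up simply makes that algebra explicit, and your check that both moments are taken with respect to the same groupoid-weighted measure is a sensible sanity point.
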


These statistics are computed in Subsection \ref{sec:glb-trace} by studying $\#\mathcal X(\F_q)$ for various stacks $\mathcal X$. All $\F_q$-point counts in this paper are weighted by the inverse of the size of their automorphism groups, as explained in Section \ref{sec:arithmetic-statistics}. 

\bigskip
\noindent
{\bf{Related work.}}
The studies of the cohomologies of the moduli space of abelian varieties and the universal abelian variety, point counts over finite fields of these varieties, and Siegel modular forms are tightly intertwined. For example, the cohomology of local systems on the moduli space of elliptic curves is known classically (e.g. the Eichler--Shimura isomorphism) and yields connections between modular forms and point counts of elliptic curves over finite fields. (See \cite{van-der-geer-abvar} and \cite{hulek--tommasi} for a survey on current developments in this area.) In the case of abelian surfaces, Faber and van der Geer first pursued this approach in \cite{faber-van-der-geer-1, faber-van-der-geer-2}, giving conjectural formulas for the class of the $\ell$-adic cohomology of local systems of $\mathcal A_2$ in the Grothendieck group of $\ell$-adic Galois representations based on computer-generated point counts over finite fields. These conjectures were proven by Weissauer (\cite{weissauer-regular}) in the case of local systems with regular highest weight, and by Petersen (\cite{petersen}) in the most general case. The connections between these ideas are implicit in our paper; we fully take advantage of the work mentioned above, in both the computations of the cohomology of the relevant spaces and the resulting arithmetic statistics.

Recent work in arithmetic statistics of abelian varieties also take a different flavor than of this paper. Honda--Tate theory has been used to determine some probabilistic data about the group structure of abelian surfaces (\cite{dgssst}), upper and lower bounds on the number of $\F_q$-points on abelian varieties (\cite{aubry--haloui--lachaud}), sizes of isogeny classes of abelian surfaces (\cite{xue--yu}), and others. Certainly, this is not the only current approach in this direction -- for example, \cite{cfhs} takes a heuristic approach to determining the probability that the number of $\F_q$-points on a genus 2 curve is prime.

On the other hand, cohomological methods have been applied to related spaces to deduce arithmetic statistical results or heuristics, such as the number of points on curves of genus $g$ (\cite{aekwz}) and the average number of points on smooth cubic surfaces (\cite{das}). For a survey in the case of counting genus $g$ curves and its connection to the cohomology of the relevant moduli spaces, see \cite{van-der-geer}.

\bigskip
\noindent
{\bf{Outline of paper.}} 
In Section \ref{sec:A2}, we give a description of the spaces of study and the cohomological tools used throughout the paper. In Section \ref{sec:universal-surface}, we prove Theorem \ref{thm:universal-surface} and in Section \ref{sec:fiber-powers}, we prove Theorem \ref{thm:low-degree-computation} and completely work out the cohomology of $\mathcal X_2^2$ as an example. In Section \ref{sec:symmetric-powers}, we carry out analogous arguments to prove Theorem \ref{thm:low-degree-sym-n} and work out the cohomology of $\mathcal X_2^{\Sym(2)}$. In Section \ref{sec:arithmetic-statistics}, we deduce new arithmetic statistics results about abelian surfaces using the previous sections, including Corollaries \ref{cor:avg-fq-points}, \ref{cor:nth-fiber-asymptotic-in-q}, \ref{cor:sym-asymptotic-in-q}, and \ref{cor:variance}.

\bigskip
\noindent
{\bf{Acknowledgements.}} 
I am deeply grateful to my advisor Benson Farb for his support and guidance throughout this project, from suggesting this problem to commenting extensively on numerous earlier drafts. I am also very grateful to Aleksander Shmakov for his continued generous help with many aspects of the project, including explaining technical background details and giving thorough comments and corrections on a previous draft. I would like to thank Eduard Looijenga, Dan Petersen, and Alexander Yong for their patient responses to many questions about the subject matter. I thank Ronno Das, Nir Gadish, and Linus Setiabrata for insightful conversations and comments on an earlier draft of this paper, as well as Linus for catching an error in that draft. I thank Frank Calegari and Jordan Ellenberg for useful comments which improved the exposition of this paper. I also thank Jeff Achter for indicating a relevant computation and Carel Faber, Nate Harman, Nat Mayer, and Philip Tosteson for helpful correspondences. Lastly, I thank the anonymous referee for their meticulous comments that vastly improved this paper.

\section{$\mathcal A_2$, its Cohomology, and Cohomological Tools}\label{sec:A2}
In this section, we describe the spaces that we study in this paper and outline the cohomological tools that will be used throughout the paper.
\subsection{Spaces of interest.}\label{sec:spaces}
Denote the moduli stack of principally polarized abelian surfaces by $\mathcal A_2$. For concreteness, we discuss an explicit construction of the set of complex points $\mathcal A_2(\C)$ of $\mathcal A_2$: let $\mathcal H_2$ be the Siegel upper half space of degree $2$ with the usual action of $\Sp(4, \Z)$,
\[
  \begin{pmatrix}
  A & B \\ C & D
  \end{pmatrix} \cdot \tau = (C\tau + D)^{-1}(A \tau + B).
\]
To each $\tau \in \mathcal H_2$, we can associate a lattice $L_\tau \subseteq \C^2$ and therefore a complex torus $A_\tau$. It turns out that $A_\tau$ comes with a natural principal polarization $H_\tau$, making $(A_\tau, H_\tau)$ into a principally polarized abelian variety. For any $\tau_1$, $\tau_2 \in \mathcal H_2$, the abelian surfaces $(A_{\tau_1}, H_{\tau_1})$ and $(A_{\tau_2}, H_{\tau_2})$ are isomorphic if and only if $\tau_1$ and $\tau_2$ are in the same $\Sp(4, \Z)$-orbit.

The action of $\Sp(4, \Z)$ on $\mathcal H_2$ is not free. For example, $-I_4$ fixes every $\tau \in \mathcal H_2$. However, the stabilizer of each point is finite. Therefore, $\mathcal A_2(\C)$ is the set of points of the orbifold $\Sp(4, \Z)\backslash\mathcal H_2$, with the underlying analytic space of $\Sp(4, \Z) \backslash \mathcal H_2$ denoted $(\mathcal A_2)_\C^{\an}$. 

The cohomology of $\mathcal A_2$ is known:
\begin{thm}[{\cite[Corollary 5.2.3]{lee--weintraub}, \cite[Section 10]{van-der-geer-abvar}}]\label{thm:a2}
\[
  H^k(\mathcal A_2; \Q_\ell) = \begin{cases}
  \Q_\ell & k = 0 \\
  \Q_\ell(-1) & k = 2 \\
  0 & \text{otherwise}.
  \end{cases} 
\]
\end{thm}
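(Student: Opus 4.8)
The plan is to compute $H^*(\mathcal A_2;\Q_\ell)$ together with its Galois action by reducing everything to two strata whose cohomology is that of a point. Concretely, over any field $\mathcal A_2$ decomposes as the disjoint union of the open substack $\mathcal M_2$ --- the image of the Torelli map, parametrizing Jacobians of smooth genus $2$ curves --- and the closed substack $\mathcal A_{1,1}$ of decomposable principally polarized abelian surfaces. A decomposable such surface is a product $E_1 \times E_2$ of elliptic curves with product polarization, recorded up to the swap of the two factors, so $\mathcal A_{1,1} \cong (\mathcal A_1 \times \mathcal A_1)/S_2 = \Sym^2 \mathcal A_1$; it is a smooth Deligne--Mumford stack of dimension $2$, hence of codimension $1$ in the $3$-dimensional stack $\mathcal A_2$. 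The first thing I would record is that $\mathcal A_1 \cong \mathcal M_{1,1}$ and $\mathcal M_2$ both have the rational $\ell$-adic cohomology of a point with trivial Galois action ($\Q_\ell$ in degree $0$, zero otherwise): for $\mathcal A_1$ this holds because rationally its cohomology agrees with that of its coarse space, the affine $j$-line; for $\mathcal M_2$ one writes a genus $2$ curve as a double cover of $\mathbb P^1$ branched at $6$ points to obtain $H^*(\mathcal M_2;\Q_\ell) \cong H^*(\mathcal M_{0,6};\Q_\ell)^{S_6}$, which is concentrated in degree $0$. By the Künneth formula and taking $S_2$-invariants, $\mathcal A_{1,1}$ then also has the cohomology of a point.

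With the strata understood, I would apply Poincaré duality for smooth Deligne--Mumford stacks to pass to compactly supported cohomology: $H^k_c(\mathcal M_2;\Q_\ell) = \Q_\ell(-3)$ for $k=6$ and $0$ otherwise, and $H^k_c(\mathcal A_{1,1};\Q_\ell) = \Q_\ell(-2)$ for $k=4$ and $0$ otherwise. Plugging these into the excision long exact sequence $\cdots \to H^k_c(\mathcal M_2;\Q_\ell) \to H^k_c(\mathcal A_2;\Q_\ell) \to H^k_c(\mathcal A_{1,1};\Q_\ell) \to H^{k+1}_c(\mathcal M_2;\Q_\ell) \to \cdots$ attached to the open--closed pair, the nonzero terms sit in non-adjacent degrees, so the connecting maps all vanish and the sequence yields $H^4_c(\mathcal A_2;\Q_\ell) = \Q_\ell(-2)$, $H^6_c(\mathcal A_2;\Q_\ell) = \Q_\ell(-3)$, and $0$ in every other degree. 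Dualizing via Poincaré duality on $\mathcal A_2$ itself, $H^k(\mathcal A_2;\Q_\ell) \cong H^{6-k}_c(\mathcal A_2;\Q_\ell)^\vee(-3)$, gives $\Q_\ell$ for $k=0$, $\Q_\ell(2)(-3) = \Q_\ell(-1)$ for $k=2$, and $0$ otherwise --- the claimed answer. As a consistency check, the Grothendieck--Lefschetz trace formula then reads $\#\mathcal A_2(\F_q) = \tr(\Frob_q \mid H^4_c) + \tr(\Frob_q \mid H^6_c) = q^2 + q^3$, the classical mass formula for principally polarized abelian surfaces; conversely, knowing this point count together with the Betti numbers and purity would pin the Tate twists down directly, and one can also identify the generator of $H^2$ with the first Chern class of the Hodge line bundle, which is manifestly of weight $2$.

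The only real obstacle I anticipate is the input on $\mathcal M_2$: verifying that $H^{>0}(\mathcal M_{0,6};\Q_\ell)^{S_6}=0$ (equivalently, that $\mathcal M_2$ has the rational cohomology of a point) and that the identification $\mathcal M_2 \sim \mathcal M_{0,6}/S_6$ remains valid with $\Q_\ell$-coefficients over $\overline\Q$ --- the latter only up to a $\Z/2$-gerbe arising from the hyperelliptic involution, which is harmless rationally. This amounts to a finite computation in the representation theory of $S_6$ acting on the cohomology of a hyperplane-arrangement complement (alternatively one can simply cite Igusa's explicit description of $\mathcal M_2$ as an affine variety). Everything else --- the Torelli stratification, the structure of $\mathcal A_{1,1}$ as $\Sym^2\mathcal A_1$, and Poincaré duality together with the excision sequence for smooth Deligne--Mumford stacks --- is standard; and because all strata have Tate-type cohomology, the Galois action is forced at every step and never causes difficulty.
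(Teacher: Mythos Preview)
Your proposal is correct, and the stratification argument you outline --- splitting $\mathcal A_2$ into the Torelli locus $\mathcal M_2$ and the product locus $\mathcal A_{1,1}\cong\Sym^2\mathcal A_1$, computing each piece, and reassembling via excision and Poincar\'e duality --- is the standard route to this result. One small remark: in step (6) of the assembly you might say explicitly that the only potentially nontrivial connecting map is $H^4_c(\mathcal A_{1,1};\Q_\ell)\to H^5_c(\mathcal M_2;\Q_\ell)$, which vanishes because the target is zero; otherwise the logic is exactly as you wrote it.

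As for the comparison: the paper does not prove Theorem~\ref{thm:a2} at all. It is quoted as a known input, with attribution to Lee--Weintraub and to van der Geer's survey, and is used throughout as a black box (for instance in populating the $q=0$ row of the Leray $E_2$-pages). Your argument is essentially the one underlying those references --- van der Geer's Section~10 in particular proceeds by exactly this stratification --- so you are supplying the proof the paper chose to outsource rather than offering a genuinely different method. The only place your write-up goes beyond a citation is in making the Galois/weight bookkeeping explicit at each step, which is appropriate here since the paper uses the Tate-twist information downstream.
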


Next, we denote the universal abelian surface by $\mathcal X_2$ and give an explicit construction for the complex points $\mathcal X_2(\C)$ of $\mathcal X_2$. Take the action of $\Sp(4, \Z)\ltimes\Z^4$ on $\mathcal H_2 \times \C^2$, where $\Z^4$ acts by translation on each $L_\tau \subseteq \C^2$. Then $\mathcal X_2(\C)$ is the set of points of the orbifold $\Sp(4, \Z) \ltimes \Z^4 \backslash \mathcal H_2 \times \C^2$ with the underlying analytic space of $(\Sp(4, \Z) \ltimes \Z^4) \backslash (\mathcal H_2 \times \C^2)$ denoted $(\mathcal X_2)_\C^{\an}$. Note that the fiber of the natural projection $\mathcal X_2(\C) \to \mathcal A_2(\C)$ over a point corresponding to the surface $A$ is the set $A(\C)$ of $\C$-points of $A$ itself.

The stack $\mathcal X_2^n$ is defined to be the fiber product of $\mathcal X_2 \to \mathcal A_2$ with itself $n$ times and comes with a natural map $\mathcal X_2^n \to \mathcal A_2$. As before, the group $\Sp(4, \Z) \ltimes (\Z^4)^n$ acts on $\mathcal H_2 \times (\C^2)^n$ in the obvious way, and so the set $\mathcal X_2^n(\C)$ of the $\C$-points of $\mathcal X_2^n$ is the set of points of the orbifold $\Sp(4, \Z) \ltimes (\Z^4)^n \backslash \mathcal H_2 \times (\C^2)^n$ with the underlying analytic space $(\mathcal X_2^n)_\C^{\an}$ given by the usual quotient. The fiber of $\mathcal X_2^n(\C) \to\mathcal A_2(\C)$ over a point corresponding to the surface $A$ is the set $A^n(\C)$ of $\C$-points of the $n$th power $A^n$.

Also consider the stack $\mathcal X_2^{\Sym(n)}$. Each fiber $A^n$ of the projection morphism $\mathcal X_2^n \to \mathcal A_2$ has an action of $S_n$ permuting the coordinates, giving a stack-theoretic quotient $\mathcal X_2^{\Sym(n)} = [\mathcal X_2^n/S_n]$. The fiber of $\mathcal X_2^{\Sym(n)} \to \mathcal A_2$ of the point corresponding to the abelian surface $A$ is $\Sym^nA$, the stack quotient $[A^n/S_n]$. As usual, there is an underlying analytic space $(\mathcal X_2^{\Sym(n)})_\C^{\an} = (\mathcal X_2^n)_\C^{\an}/S_n$. 

For any $N \geq 2$ and abelian surface $A$, let $A[N]$ be the kernel of the multiplication by $N$ map on $A$. Consider the moduli stack $\mathcal A_2[N]$ of principally polarized abelian surfaces with symplectic level $N$ structure, i.e. pairs $(A, \alpha)$ where $\alpha$ is an isomorphism from $A[N]$ to a fixed symplectic module $((\Z/N\Z)^4, \langle \cdot, \cdot \rangle)$. Let $\Sp(4, \Z)[N] = \ker(\Sp(4, \Z) \to \Sp(4, \Z/N\Z))$. If $N \geq 3$, both $\mathcal A_2[N]$ and $\mathcal X_2[N]$ are quasiprojective schemes over $\Z[\frac 1N, \zeta_N]$ (\cite[Chapter IV]{faltings--chai}).

On the other hand, consider the moduli stack $\mathcal A_{2,N}$ of principally polarized abelian surfaces with principal level $N$ structure, i.e. pairs $(A, \beta)$ where $\beta: A[N] \to (\Z/N\Z)^4$ is an isomorphism. For $N \geq 3$, $\mathcal A_{2,N}$ and its universal family $\mathcal X_{2,N}$ are quasiprojective schemes over $\Z[\frac 1N]$ (\cite[Chapter I]{faltings--chai}). This shows that over $\Z[\frac 1N]$ with $N \geq 3$, the stacks $\mathcal X = \mathcal A_2$, $\mathcal X_2^n$, and $\mathcal X_2^{\Sym(n)}$ are all finite quotients of quasiprojective schemes (cf. \cite[Theorem 2.1.11]{olsson}). Over characteristic zero, quotient stacks with finite automorphism groups at every point are Deligne--Mumford stacks (\cite[Corollary 2.2]{edidin}). Over positive characteristic, quotient stacks are a priori Artin stacks with a smooth atlas. Because any base change of an \'etale morphism is \'etale, any stack $\mathcal X$ considered in this paper obtained from a stack over $\Z[\frac 1N]$ via base-change to $\F_q$ (where $N$ and $q$ are coprime) has an \'etale atlas; therefore, $\mathcal X_{\F_q}$ is a Deligne--Mumford stack.

In fact, $\mathcal A_2$ and $\mathcal X_2^n$ are complements of normal crossing divisors in smooth, proper stacks over $\Z$ (see \cite[Chapter VI]{faltings--chai}), making both $\mathcal A_2$ and $\mathcal X_2^n$ as well as its finite quotient $\mathcal X_2^{\Sym(n)}$ smooth stacks over any finite field $\F_q$. Over any field $k$, there are the following moduli interpretations of the $k$-points of these stacks. When we write an abelian surface $A$, we mean $A$ with a principal polarization.
\begin{enumerate}
  \item $[\mathcal A_2(k)]$ is the set of $k$-isomorphism classes of abelian surfaces $A$ defined over $k$. \\
  \item $[\mathcal A_2[N](k)]$ is the set of $k$-isomorphism classes of pairs $(A, \alpha)$ where $\alpha: A[N] \to (\Z/N\Z)^4$ is a symplectic isomorphism, \\
  \item $[\mathcal X_2^n(k)]$ is the set of $k$-isomorphism classes of pairs $(A, p)$ with $p \in A^n$, defined over $k$. \\
  \item $[\mathcal X_2^{\Sym(n)}(k)]$ is the set of $k$-isomorphism classes of pairs $(A, p)$ with $p \in A^n/S_n$, defined over $k$.
\end{enumerate}
\begin{rmk}
We note that for some $\{p_1, \dots, p_n\} \in (A^n/S_n)(k)$, a lift $(p_1, \dots, p_n) \in A^n$ may not be a $k$-point of $A^n$. For instance, if $\Gal(\overline k/k)$ permutes the points $p_1, \dots, p_n \in A^n$, then $\{p_1, \dots, p_n\}$ will be a $k$-point of $\Sym^n A$, but not necessarily of $A^n$. 
\end{rmk}

Although this is possibly not the most efficient framework, we will access all stacks discussed by taking quotient stacks of the respective quasiprojective schemes throughout this paper in an effort to keep the arguments as concrete as possible. 

\subsection{Local systems on $\mathcal A_2$.}\label{sec:local-systems}
Representations of $\GSp(4, \Q_\ell)$ give rise to $\ell$-adic local systems on $(\mathcal A_2)_{\Z[1/\ell]}$ (\cite[p. 238]{faltings--chai}). The local systems are considered instead as representations of $\Sp(4, \Q_\ell)$ in \cite{petersen}; we also study the underlying $\Sp(4, \Q_\ell)$-representations of local systems at various points in this paper. In this section we review the construction of local systems on $(\mathcal A_2)_{\Z[1/\ell]}$; the reader may also consult \cite[Section 4]{bfv-degree-3} or \cite[Section 4]{v-rank-1-eisenstein}.

By Weyl's construction (see \cite[Section 17.3]{fulton--harris}), all irreducible representations of $\Sp(4, \Q_\ell)$ are given in the following way: for any $a \geq b \geq 0$, there is an irreducible representation $W_{a, b}$ with highest weight $aL_1 + bL_2$, using the notation of \cite[Chapter 17]{fulton--harris}. In particular, $W_{a, b}$ is a summand of $W_{1,0}^{\otimes (a+b)}$ and is the irreducible representation of highest weight in $\Sym^{a-b}(W_{1,0}) \otimes \Sym^b(\bigwedge^2 W_{1,0})$ by construction where $W_{1,0}$ is the $4$-dimensional standard representation of $\Sp(4, \Q_\ell)$. As explained in \cite[Section 1]{faber-van-der-geer-1}, we can lift $W_{a,b}$ to a representation of $\GSp(4, \Q_\ell)$ of dominant weight $aL_1 + bL_2 - (a+b)\eta$ where $\eta$ is the \emph{multiplier representation}, which we denote by $V_{a,b}$. The multiplier representation $\eta$ is defined as $\eta: \GSp(4, \Q_\ell)\to \Q_\ell^\times$ where for any $M \in \GSp(4, \Q_\ell)$ written as 
\[
  M = \begin{pmatrix} A & B \\ C & D \end{pmatrix}
\]
with $A, B, C, D\in \GL(2)$, $\eta(M)$ satisfies
\[
  AD^T - BC^T = \eta(M) I_2.
\]
In particular, this makes $V_{1,0}$ the contragredient representation of the standard representation $V$ of $\GSp(4, \Q_\ell)$, i.e. $V_{1,0} \cong V \otimes \eta^{-1}$.

Let $\pi: \mathcal X_2 \to \mathcal A_2$. By the proper base change theorem (\cite[Corollary VI.2.5]{milne-etale-book}), the stalk of $R^1\pi_*\Q_\ell$ at $[A]\in \mathcal A_2$ is isomorphic to $H^1(A; \Q_\ell)$. Define $\mathbf V_{1,0}$ to be the local system $R^1\pi_*\Q_\ell$. The underlying $\GSp(4, \Q_\ell)$-representation of each stalk of $\mathbf V_{1,0}$ is $V_{1,0}$ and $\mathbf V_{1,0}$ is a local system equipped with a symplectic pairing
\[
  \mathbf V_{1,0} \wedge \mathbf V_{1,0} \to \Q_\ell(-1).
\]
Applying Weyl's construction to the local system $\mathbf V_{1,0}$ yields local systems $\mathbf V_{a,b}$ for all $a \geq b \geq 0$. Each $\mathbf V_{a,b}$ is a summand in $\mathbf V_{1,0}^{\otimes(a+b)}$, so $\mathbf V_{a,b}$ has Hodge weight $a+b$. The underlying $\Sp(4, \Q_\ell)$-representation of $\mathbf V_{a,b}$ is $W_{a,b}$ and the underlying $\GSp(4, \Q_\ell)$-representation of $\mathbf V_{a,b}$ is $V_{a,b}$. 

For all $n \in \Z$, let $\mathbf V_{a,b}(n) := \mathbf V_{a,b} \otimes \Q_\ell(n)$ be the $n$th Tate twist of $\mathbf V_{a,b}$. Tate twists also correspond to tensoring the local systems with the multiplier representation $\eta$, i.e. $\mathbf V_{a,b}(n) = \mathbf V_{a,b} \otimes \eta^{n}$.

\subsection{Cohomological tools.}\label{sec:cohomology}
In this subsection, we list the tools we will need in subsequent sections regarding cohomology computations. First, we set some notation used for the remainder of the paper. We will always denote by $A$ an abelian surface. By $H^*(\mathcal A_2; H^q(X; \Q_\ell))$ for some morphism $f: \mathcal Y \to \mathcal A_2$ with a fiber $X$ over some point in $\mathcal A_2$ where $R^qf_*\Q_\ell$ is locally constant, we will always mean the cohomology of $R^qf_*\Q_\ell$ (see the proof of Proposition \ref{prop:leray}). The prime $\ell$ will always be taken to be coprime to $q$ when working with the base change $\mathcal X_{\overline\F_q}$.

\begin{rmk}\label{rmk:etale-Q-Fq}
Let $\mathcal X = \mathcal A_2$ or $\mathcal X_2^n$ and let $\mathbf V$ be an $\ell$-adic local system on $\mathcal X$. Since $\mathcal X$ is a complement of a normal crossing divisor of a smooth, proper stack over $\Z$ (\cite[Chapter VI]{faltings--chai}), $H^*_{\text{\'et}}(\mathcal X_{\overline \Q}; \mathbf V)$ is unramified at every prime $p \neq \ell$ (\cite[p. 11]{petersen}), i.e. the action of $\Frob_p$ is well-defined. There is an isomorphism 
\[
  H^*_{\text{\'et}}(\mathcal X_{\overline \Q}; \mathbf V) \cong H^*_{\text{\'et}}(\mathcal X_{\overline \F_p}; \mathbf V)
\]
such that the action of $\Gal(\overline \Q_p/\Q_p) \subseteq \Gal(\overline \Q/\Q)$ on the left side factors through the surjection $\Gal(\overline \Q_p/\Q_p) \to \Gal(\overline \F_p/\F_p)$, where $\Gal(\overline \F_p/\F_p)$ acts on the right side. 

To obtain the analogous isomorphism for $\mathcal X_2^{\Sym(n)}$, we recall that $\mathcal X_2^{\Sym(n)}$ is a quotient stack of a scheme $\mathcal X_{2,N}^n$ of some finite group $G$. We now study the Hochschild--Serre spectral sequence for the quotient $\mathcal X_{2,N}^n \to \mathcal X_2^{\Sym(n)}$, where $\mathcal X_{2,N}^n$ is a quasiprojective scheme which is a complement of a normal crossing divisor of a smooth, proper scheme over $\Z$ (\cite[Chapter VI]{faltings--chai}). These spectral sequences are given by
\begin{align*}
  E_2^{p,q} &= H^p(G; H^q((\mathcal X_{2,N}^n)_{\overline \Q}; \Q_\ell)) \implies H^{p+q}((\mathcal X_2^{\Sym(n)})_{\overline \Q}; \Q_\ell), \\
   E_2^{p,q} &= H^p(G; H^q((\mathcal X_{2,N}^n)_{\overline\F_p}; \Q_\ell)) \implies H^{p+q}((\mathcal X_2^{\Sym(n)})_{\overline\F_p}; \Q_\ell).
\end{align*}
Therefore up to semi-simplification, there is the analogous isomorphism
\[
  H^*_{\text{\'et}}((\mathcal X_2^{\Sym(n)})_{\overline \Q}; \Q_\ell) \cong H^*_{\text{\'et}}(\mathcal X_{2,N}^n)_{\overline \Q}; \Q_\ell)^G \cong H^*_{\text{\'et}}(\mathcal X_{2,N}^n)_{\overline \F_p}; \Q_\ell)^G \cong H^*_{\text{\'et}}((\mathcal X_2^{\Sym(n)})_{\overline \F_p}; \Q_\ell).
\]

As stated in the Introduction, we write $H^*(\mathcal X; \mathbf V)$ to denote both $H^*_{\textup{\'et}} (\mathcal X_{\overline \Q}; \mathbf V)$ and $H^*_{\textup{\'et}} (\mathcal X_{\overline\F_q}; \mathbf V)$ for any $\ell$-adic local system $\mathbf V$ with $\ell$ coprime to $q$. As such, all $\ell$-adic local systems $\mathbf V$ in this paper are local systems on $(\mathcal A_2)_{\overline \Q}$ or $(\mathcal A_2)_{\overline \F_q}$.
\end{rmk}

The next two statements are well-known for all $g \geq 1$ but we specialize to the case $g = 2$. Both theorems (for general $g \geq 1$) can be found in \cite{hulek--tommasi}. 
\begin{thm}[Poincar\'e Duality for $\mathcal A_2$]\label{thm:poincare-duality}
For any $a \geq b \geq 0$, 
\[
  H^k_c(\mathcal A_2; \mathbf V_{a, b}) \cong H^{6 - k}(\mathcal A_2; \mathbf V_{a, b})^* \otimes \Q_\ell\left(-3 - a - b\right).
\]
(Recall that $\dim \mathcal A_2 = 3$.)
\end{thm}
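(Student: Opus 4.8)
The plan is to deduce this from Poincar\'e--Lefschetz duality for the smooth Deligne--Mumford stack $\mathcal A_2$, which is connected of dimension $\dim\mathcal A_2=\tfrac{g(g+1)}{2}=3$, together with the fact that each $\mathbf V_{a,b}$ is symplectically self-dual up to a Tate twist. Concretely, for any $\ell$-adic local system $\mathbf F$ on $\mathcal A_2$ I want a perfect pairing
\[
  H^k_c(\mathcal A_2;\mathbf F)\times H^{6-k}(\mathcal A_2;\mathbf F^\vee)\longrightarrow H^6_c(\mathcal A_2;\Q_\ell)\cong\Q_\ell(-3),
\]
which rearranges to $H^k_c(\mathcal A_2;\mathbf F)\cong H^{6-k}(\mathcal A_2;\mathbf F^\vee)^*\otimes\Q_\ell(-3)$; specializing to $\mathbf F=\mathbf V_{a,b}$ and then identifying $\mathbf V_{a,b}^\vee$ will give the claim.

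First I would establish the duality pairing. Non-properness of $\mathcal A_2$ is precisely why one side is taken with compact supports, so this is the Verdier/Poincar\'e duality statement for a smooth, non-proper stack; the only subtlety is the stacky structure. To handle it concretely (in keeping with the rest of the paper) I would pass to a finite \'etale cover $\mathcal A_{2,N}\to\mathcal A_2$ with $N\geq 3$, which is a smooth quasiprojective \emph{scheme} of dimension $3$, and apply ordinary Poincar\'e duality there. Writing $\mathcal A_2=[\mathcal A_{2,N}/G]$ with $G=\Sp(4,\Z/N\Z)$ finite (and choosing $N$ so that $\ell\nmid\#G$), one has $H^*(\mathcal A_2;\mathbf F)=H^*(\mathcal A_{2,N};\mathbf F)^G$ and likewise for $H^*_c$; the cup product and trace map on $\mathcal A_{2,N}$ are $G$- and $\Gal(\overline\Q/\Q)$-equivariant, so the duality pairing descends to $G$-invariants, giving the displayed pairing for $\mathcal A_2$. (Alternatively one may cite Poincar\'e duality for smooth Deligne--Mumford stacks directly, as in \cite{hulek--tommasi}.) One should also record that $H^6_c(\mathcal A_2;\Q_\ell)\cong\Q_\ell(-3)$ since $\mathcal A_2$ is smooth, connected, of dimension $3$.

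It then remains to identify $\mathbf V_{a,b}^\vee$. The Weil pairing supplies a $\Gal(\overline\Q/\Q)$- and $\Sp(4,\Z)$-equivariant perfect pairing $\bigwedge^2\mathbf V_{1,0}\to\Q_\ell(-1)$, hence an isomorphism $\mathbf V_{1,0}\cong\mathbf V_{1,0}^\vee(-1)$. Taking $(a+b)$-fold tensor powers gives, equivariantly, $\mathbf V_{1,0}^{\otimes(a+b)}\cong\bigl(\mathbf V_{1,0}^{\otimes(a+b)}\bigr)^\vee(-(a+b))$. By Weyl's construction, $\mathbf V_{a,b}$ is the image of a projector on $\mathbf V_{1,0}^{\otimes(a+b)}$ built from a Young symmetrizer and contractions against the symplectic form; these are all defined over the base, so the projector commutes with the Galois action, and applying it to both sides yields $\mathbf V_{a,b}\cong\mathbf V_{a,b}^\vee(-(a+b))$, i.e. $\mathbf V_{a,b}^\vee\cong\mathbf V_{a,b}(a+b)$. (Equivalently: $V_{a,b}$ is irreducible and self-dual, so by Schur's lemma a nonzero pairing $\mathbf V_{a,b}\otimes\mathbf V_{a,b}\to\Q_\ell(m)$ is unique up to scalar, and purity — $\mathbf V_{a,b}$ has weight $a+b$ — forces $m=-(a+b)$.) Substituting into the duality isomorphism,
\[
  H^k_c(\mathcal A_2;\mathbf V_{a,b})\cong H^{6-k}(\mathcal A_2;\mathbf V_{a,b}(a+b))^*\otimes\Q_\ell(-3)\cong H^{6-k}(\mathcal A_2;\mathbf V_{a,b})^*\otimes\Q_\ell(-3-a-b),
\]
which is the assertion. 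The one genuine input is the duality pairing of the first step; everything after it is bookkeeping of Tate twists, and I expect the descent-through-$\mathcal A_{2,N}$ argument (or the citation of the stack-theoretic version) to be the only place requiring care.
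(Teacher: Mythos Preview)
Your proposal is correct. The paper does not supply a proof of this theorem at all: it states it as well-known (for general $g$) and simply cites \cite{hulek--tommasi}. Your outline---Poincar\'e--Verdier duality on the smooth Deligne--Mumford stack $\mathcal A_2$ (reduced to the scheme $\mathcal A_{2,N}$ via descent along the finite \'etale $\Sp(4,\Z/N\Z)$-cover), combined with the self-duality $\mathbf V_{a,b}^\vee\cong\mathbf V_{a,b}(a+b)$ obtained from the Weil pairing and Weyl's construction---is exactly how such a proof goes, and you even note the option of citing the stack-theoretic version directly, which is what the paper does.
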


\begin{thm}[Deligne's weight bounds]\label{thm:weights}
The mixed Hodge structures on the groups $H^k(\mathcal A_2; \mathbf V_{a, b})$ have weights larger than or equal to $k + a + b$. 
\end{thm}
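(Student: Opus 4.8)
The plan is to reduce the assertion to Deligne's weight bounds for the constant sheaf on a smooth variety, using the fact that $\mathbf V_{a,b}$ is realized geometrically by fiber powers of $\pi$. Recall from Subsection~\ref{sec:local-systems} that $\mathbf V_{a,b}$ is a direct summand, as a polarizable variation of Hodge structure of weight $a+b$, of $\mathbf V_{1,0}^{\otimes(a+b)} = (R^1\pi_*\Q_\ell)^{\otimes(a+b)}$; the projector onto it is assembled from permutations of the tensor factors together with contractions against the Weil pairing $\bigwedge^2\mathbf V_{1,0}\to\Q_\ell(-1)$ and its inverse, all of which are morphisms of VHS, so no Tate twist is introduced. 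Let $\rho\colon\mathcal X_2^{a+b}\to\mathcal A_2$ be the $(a+b)$-fold fiber power of $\pi$. The relative Künneth isomorphism realizes $\mathbf V_{1,0}^{\otimes(a+b)}$ — and hence $\mathbf V_{a,b}$ — as a direct summand of the local system $R^{a+b}\rho_*\Q_\ell$ compatibly with Hodge structures, so that $H^k(\mathcal A_2;\mathbf V_{a,b})$ is a direct summand, as a mixed Hodge structure, of $H^k(\mathcal A_2; R^{a+b}\rho_*\Q_\ell)$.

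Next I would run the Leray spectral sequence of $\rho$. Every fiber $A^{a+b}$ of $\rho$ is an abelian variety, so $\rho$ is smooth and proper; hence the Leray spectral sequence
\[
  E_2^{p,q} = H^p(\mathcal A_2; R^q\rho_*\Q_\ell) \;\Longrightarrow\; H^{p+q}(\mathcal X_2^{a+b};\Q_\ell)
\]
degenerates at $E_2$ — for abelian schemes this is elementary, since the multiplication-by-$N$ maps act on $R^q\rho_*\Q_\ell$ by the scalar $N^q$ while commuting with the differentials, forcing $d_r=0$ for $r\geq 2$ — and it is a spectral sequence of mixed Hodge structures. (Since $\mathcal A_2$ and $\mathcal X_2^{a+b}$ are stacks, one passes to a level-$N$ cover with $N\geq 3$ to work with smooth quasiprojective varieties, and then takes invariants under the finite deck group, under which all of the above is preserved.) Degeneration gives $E_2^{p,a+b}=E_\infty^{p,a+b}=\mathrm{gr}^p_L H^{p+a+b}(\mathcal X_2^{a+b};\Q_\ell)$ as mixed Hodge structures, so, writing $k=p$, the group $H^k(\mathcal A_2;\mathbf V_{a,b})$ is a subquotient, as a mixed Hodge structure, of $H^{k+a+b}(\mathcal X_2^{a+b};\Q_\ell)$.

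Finally I would apply Deligne's weight bound for smooth varieties: $\mathcal X_2^{a+b}$ is a smooth Deligne--Mumford stack — a finite quotient of the smooth quasiprojective variety $\mathcal X_{2,N}^{a+b}$, which is itself the complement of a normal crossing divisor in a smooth proper variety — so $H^m(\mathcal X_2^{a+b};\Q_\ell)$ is mixed of weights $\geq m$, the bound passing to the deck-group invariants. A sub- or quotient object of a mixed Hodge structure of weights $\geq m$ again has weights $\geq m$, so taking $m=k+a+b$ yields that $H^k(\mathcal A_2;\mathbf V_{a,b})$ has weights $\geq k+a+b$, as claimed. The step I expect to demand the most care is the compatibility of the Leray spectral sequence, together with its $E_2$-degeneration, with the mixed Hodge structures involved; this is supplied by Saito's formalism of mixed Hodge modules (or, more directly, by Arapura's result that the Leray filtration is motivic), and it is the one genuinely nontrivial input beyond the classical weight bounds of Deligne.
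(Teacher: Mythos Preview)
The paper does not prove this theorem: it is stated as a known result and attributed (together with Poincar\'e duality) to \cite{hulek--tommasi}, with no argument given. Your proposal, by contrast, supplies an actual proof, and the argument is correct. It is in fact the standard way one derives such weight bounds for coefficients in a geometric local system: realize $\mathbf V_{a,b}$ as a direct summand of $R^{a+b}\rho_*\Q_\ell$ for $\rho\colon\mathcal X_2^{a+b}\to\mathcal A_2$, use degeneration of Leray to exhibit $H^k(\mathcal A_2;\mathbf V_{a,b})$ as a sub-MHS of $H^{k+a+b}(\mathcal X_2^{a+b};\Q_\ell)$, and then invoke the classical bound that $H^m$ of a smooth variety has weights $\geq m$. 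Your identification of the one genuinely delicate point---compatibility of the Leray filtration with mixed Hodge structures, supplied by Saito or Arapura---is exactly right. One small remark: you do not actually need the full projector onto $\mathbf V_{a,b}$ (with contractions and their inverses) to be a morphism of VHS; it suffices that $\mathbf V_{a,b}$ sits inside $\mathbf V_{1,0}^{\otimes(a+b)}$ as a sub-VHS (it is cut out as an intersection of kernels of contractions, applied to a Schur functor), since the weight bound is inherited by sub- and quotient objects.
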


In Sections \ref{sec:universal-surface}, \ref{sec:fiber-powers}, and \ref{sec:symmetric-powers}, we compute the \'etale cohomology $H^*(\mathcal X; \Q_\ell)$, with $\mathcal X = \mathcal X_2$, $\mathcal X_2^n$, and $\mathcal X_2^{\Sym(n)}$ respectively. In all of these cases, there are morphisms $\pi: \mathcal X \to \mathcal A_2$ to which we want to apply the Leray spectral sequence to obtain the desired results.\begin{thm}[\cite{deligne}]
Let $f: X \to Y$ be a smooth projective morphism of complex varieties. Then the Leray spectral sequence for $f$ degenerates on the $E_2$-page.
\end{thm}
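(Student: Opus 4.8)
The plan is to deduce degeneration from Deligne's degeneration criterion \cite{deligne}, whose essential geometric input is the Hard Lefschetz theorem applied fiberwise. Write $f \colon X \to Y$ for the smooth projective morphism, of relative dimension $n$ say, and work with $\Q$-coefficients throughout (the argument is insensitive to whether one uses singular cohomology of the analytifications or $\ell$-adic cohomology). Since $f$ is proper and smooth, the Ehresmann fibration theorem realizes $f^{\an}$ as a locally trivial $C^\infty$ fiber bundle, so each higher direct image $R^q f_* \Q$ is a local system on $Y$ with stalk $H^q(X_y; \Q)$ at $y$. Now fix a relatively ample line bundle $\mathcal L$ on $X$; its first Chern class gives a morphism $L \colon Rf_* \Q \to Rf_* \Q[2]$ in $D^b(Y)$ --- the relative Lefschetz operator --- which on cohomology sheaves is cup product with the fiberwise hyperplane class. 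The Hard Lefschetz theorem for the smooth projective variety $X_y$, polarized by $\mathcal L|_{X_y}$, says precisely that the iterate
\[
  L^k \colon R^{n-k} f_* \Q \xrightarrow{\ \sim\ } R^{n+k} f_* \Q
\]
is an isomorphism of local systems on $Y$ for every $k \geq 0$.

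With this in hand I would invoke Deligne's decomposition lemma: a bounded complex $K \in D^b(Y)$ equipped with a degree-$2$ endomorphism $L$ for which, after centering the indexing, $L^k \colon \mathcal H^{-k}(K) \to \mathcal H^{k}(K)$ is an isomorphism for all $k$, is (non-canonically) isomorphic to $\bigoplus_i \mathcal H^i(K)[-i]$. Applying this to $K = Rf_* \Q$, shifted so the middle cohomology sheaf $R^n f_* \Q$ lies in degree $0$, yields a splitting $Rf_* \Q \cong \bigoplus_{q} R^q f_* \Q\,[-q]$ in $D^b(Y)$. Taking hypercohomology over $Y$, and using that hypercohomology commutes with finite direct sums, gives $H^m(X; \Q) \cong \bigoplus_{p+q=m} H^p(Y; R^q f_* \Q)$; hence every graded piece of the Leray filtration survives, i.e.\ all differentials $d_r$ with $r \geq 2$ vanish, which is exactly the claimed $E_2$-degeneration (the Leray spectral sequence being the hypercohomology spectral sequence of $Rf_* \Q$ for the standard $t$-structure).

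The main obstacle is the decomposition lemma itself, the only genuinely non-formal step beyond Hard Lefschetz: the splitting is produced by using the $\mathfrak{sl}_2$-action generated by $L$ and its dual Lefschetz operator $\Lambda$ (supplied by Hard Lefschetz and the primitive decomposition) to build explicit idempotent projectors onto the cohomology sheaves, and the real work is checking that these are well-defined, mutually orthogonal, and sum to the identity in $D^b(Y)$. For the purposes of this paper there is nothing to carry out here beyond citing \cite{deligne}; an alternative route, which makes the $E_2$-splitting more transparent at the cost of relocating the difficulty into the Hodge-theoretic input, is to observe that each $R^q f_* \Q$ underlies a polarizable variation of Hodge structure and to appeal to the semisimplicity of that category.
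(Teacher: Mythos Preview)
The paper does not prove this theorem at all: it is stated as a cited result from \cite{deligne} and used as a black box (the only application being Corollary~\ref{cor:deligne}). Your proposal therefore goes well beyond the paper by sketching the actual content of Deligne's argument, and the sketch is correct --- fiberwise Hard Lefschetz for a relatively ample class, feeding into Deligne's decomposition criterion to split $Rf_*\Q$ in $D^b(Y)$, whence $E_2$-degeneration. Since the paper offers nothing to compare against, there is no divergence of approach to discuss; your outline is faithful to the source being cited.
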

For $N \geq 3$, the projection $\pi: \mathcal X_2[N]^n_{\C} \to \mathcal A_2[N]_{\C}$ is a projective morphism of quasi-projective varieties. Combined with a corollary of the proper base change theorem (\cite[Corollary VI.4.3]{milne-etale-book}), this implies the following useful result:
\begin{cor}\label{cor:deligne}
For all $n \geq 1$, $N \geq 3$, the Leray spectral sequence for $\pi: \mathcal X_2[N]^n_{\overline \Q} \to \mathcal A_2[N]_{\overline \Q}$ degenerates on the $E_2$-page.
\end{cor}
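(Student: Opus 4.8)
The plan is to deduce the statement from Deligne's degeneration theorem quoted above, applied over $\C$, and then to transport the conclusion from $\C$ down to $\overline\Q$ using that the $\ell$-adic cohomology of a variety over an algebraically closed field does not change under extension of that field.

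First I would verify the hypotheses of Deligne's theorem over $\C$. For $N \geq 3$ the stacks $\mathcal A_2[N]$ and $\mathcal X_2[N]^n$ are quasi-projective schemes over $\Z[\tfrac{1}{N}]$ (as recalled in Subsection~\ref{sec:spaces}), so $\mathcal A_2[N]_\C$ and $\mathcal X_2[N]^n_\C$ are smooth quasi-projective complex varieties and $\pi_\C$ is smooth, being a fibre power of the smooth universal abelian scheme. It is moreover projective: the principal polarization supplies a relatively ample line bundle on $\mathcal X_2[N]_\C \to \mathcal A_2[N]_\C$, and the box-tensor-product of this bundle along the $n$ projections is relatively ample on the $n$-fold fibre power. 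Thus Deligne's theorem applies and yields degeneration at $E_2$ of the Leray spectral sequence of $\pi_\C$. A priori this is the spectral sequence in singular $\Q$-cohomology, but Artin's comparison theorem (\cite[Theorem 21.1]{milne-etale}) identifies it compatibly with the $\Q_\ell$-linearization of the $\ell$-adic Leray spectral sequence of $\pi_\C$, so the $\ell$-adic spectral sequence
\[
  E_2^{p,q} = H^p\left(\mathcal A_2[N]_\C;\, R^q(\pi_\C)_*\Q_\ell\right) \Longrightarrow H^{p+q}\left(\mathcal X_2[N]^n_\C;\, \Q_\ell\right)
\]
degenerates at $E_2$ as well.

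Next I would pass from $\C$ to $\overline\Q$. Fixing an embedding $\overline\Q \hookrightarrow \C$, smooth and proper base change identify the pullback of $(R^q\pi_*\Q_\ell)_{\overline\Q}$ to $\C$ with $R^q(\pi_\C)_*\Q_\ell$; and the cited corollary of proper base change (\cite[Corollary VI.4.3]{milne-etale-book}) gives natural isomorphisms $H^p(\mathcal A_2[N]_{\overline\Q}; R^q\pi_*\Q_\ell) \xrightarrow{\ \sim\ } H^p(\mathcal A_2[N]_\C; R^q(\pi_\C)_*\Q_\ell)$ and $H^m(\mathcal X_2[N]^n_{\overline\Q}; \Q_\ell) \xrightarrow{\ \sim\ } H^m(\mathcal X_2[N]^n_\C; \Q_\ell)$. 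By functoriality of the Leray spectral sequence under base change, these assemble into an isomorphism on every page between the Leray spectral sequence of $\pi_{\overline\Q}$ and that of $\pi_\C$; hence the former degenerates at $E_2$ because the latter does. Equivalently, degeneration at $E_2$ is just the numerical identity $\dim_{\Q_\ell} H^m(\mathcal X_2[N]^n;\Q_\ell) = \sum_{p+q=m}\dim_{\Q_\ell} H^p(\mathcal A_2[N]; R^q\pi_*\Q_\ell)$ for all $m$, and every dimension occurring is preserved by the isomorphisms above.

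The only genuinely delicate point is bookkeeping: the compatibility of the Leray spectral sequences with Artin comparison and with extension of the algebraically closed base field. Both are standard, since the relevant comparison functors are exact and commute with the pushforwards $R\pi_*$, so once these compatibilities are invoked the argument reduces to the dimension count above; the one preliminary to be careful about, as noted, is that $\pi$ is projective rather than merely proper, which follows from the relative ampleness furnished by the principal polarization.
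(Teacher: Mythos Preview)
Your proposal is correct and follows essentially the same approach as the paper: apply Deligne's theorem over $\C$ (using that for $N\geq 3$ the map is a smooth projective morphism of quasi-projective varieties) and then transport the degeneration to $\overline\Q$ via the cited corollary of proper base change. You simply supply more detail than the paper does, in particular the justification of projectivity via the relatively ample line bundle coming from the principal polarization and the explicit invocation of Artin comparison.
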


Finally, we state the main tool of this paper. The following proposition gives a Leray spectral sequence for each morphism of stacks $\pi: \mathcal X \to \mathcal A_2$ with $\mathcal X = \mathcal X_2^n$ or $\mathcal X_2^{\Sym(n)}$ using the Leray spectral sequence for schemes in \'etale cohomology.
\begin{prop}\label{prop:leray}
Let $\mathcal X = \mathcal X_2^n$ (resp. $\mathcal X_2^{\Sym(n)}$) and $\pi: \mathcal X \to \mathcal A_2$. There is a spectral sequence
\[
  E_2^{p, q} = H^p(\mathcal A_2; H^q(Z_n; \Q_\ell)) \implies H^{p+q}(\mathcal X; \Q_\ell)
\]
with $Z_n = A^n$ (resp. $Z_n = \Sym^nA$), which degenerates on the $E_2$-page. 
\end{prop}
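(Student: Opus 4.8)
The plan is to take for the stated spectral sequence the \'etale Leray spectral sequence of the morphism $\pi$ of Deligne--Mumford stacks (the Grothendieck spectral sequence for $R\Gamma(\mathcal A_2, -) \circ R\pi_*$ applied to $\Q_\ell$ on $\mathcal X$), and to establish its degeneration by descent to a level cover. First I would identify the $E_2$-page. The morphism $\pi \colon \mathcal X_2^n \to \mathcal A_2$ is proper and smooth with geometric fibres $A^n$, so proper base change identifies $R^q\pi_*\Q_\ell$ with a lisse sheaf whose stalk is $H^q(A^n; \Q_\ell)$, and whose monodromy representation of $\pi_1^{\orb}(\mathcal A_2) = \Sp(4,\Z)$ is, by the K\"unneth formula and Fact \ref{fact:monodromy-action}, the one built from tensor products of exterior powers of $V_{1,0}$; hence $E_2^{p,q} = H^p(\mathcal A_2; H^q(A^n;\Q_\ell))$ with the conventions of Subsection \ref{sec:local-systems}. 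For $\mathcal X_2^{\Sym(n)}$ the morphism $\pi$ is still proper, its geometric fibre over $[A]$ is $[A^n/S_n]$, and $H^q([A^n/S_n]; \Q_\ell) = H^q(A^n;\Q_\ell)^{S_n} = H^q(\Sym^n A; \Q_\ell)$ since the cohomology of a finite quotient is the invariants of that of the source when $\ell$ is invertible; as the $S_n$- and $\Sp(4,\Z)$-actions commute, the monodromy of $R^q\pi_*\Q_\ell$ is the corresponding $S_n$-invariant subrepresentation, so $R^q\pi_*\Q_\ell = (R^q\pi^{(n)}_*\Q_\ell)^{S_n}$ (writing $\pi^{(n)}$ for the map on $\mathcal X_2^n$) and $E_2^{p,q} = H^p(\mathcal A_2; H^q(\Sym^n A;\Q_\ell))$.

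For the degeneration in the case $\mathcal X = \mathcal X_2^n$, I would fix $N \geq 3$ and use the finite \'etale level cover $f\colon \mathcal A_2[N] \to \mathcal A_2$ with deck group $G = \Sp(4,\Z/N)$, so that $\mathcal A_2 = [G \backslash \mathcal A_2[N]]$ and, since the universal family and fibre powers commute with base change, $\mathcal X_2^n = [G \backslash \mathcal X_2[N]^n]$. Because $G$ is finite and we use $\Q_\ell$-coefficients, the Hochschild--Serre spectral sequences for these two covers degenerate (higher cohomology of $G$ vanishes over a field of characteristic zero), giving $H^m(\mathcal X_2^n;\Q_\ell) = H^m(\mathcal X_2[N]^n;\Q_\ell)^G$ and, since $f^*R^q\pi_*\Q_\ell \cong R^q\pi_{N,*}\Q_\ell$ by base change, $E_2^{p,q}(\pi) = E_2^{p,q}(\pi_N)^G$, where $\pi_N \colon \mathcal X_2[N]^n_{\overline\Q} \to \mathcal A_2[N]_{\overline\Q}$ is the induced morphism of quasiprojective varieties. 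By Corollary \ref{cor:deligne} the Leray spectral sequence of $\pi_N$ degenerates at $E_2$, so its abutment filtration on $H^m(\mathcal X_2[N]^n;\Q_\ell)$ is $G$-equivariant with graded pieces the $E_2^{p,q}(\pi_N)$ for $p + q = m$; as these are $G$-representations over $\Q_\ell$, the filtration splits $G$-equivariantly, and taking $G$-invariants gives $\dim H^m(\mathcal X_2^n;\Q_\ell) = \sum_{p+q = m} \dim E_2^{p,q}(\pi)$. Since the inequality $\dim H^m \leq \sum_{p+q=m} \dim E_2^{p,q}$ always holds, this forces all differentials $d_r$, $r \geq 2$, of the Leray spectral sequence of $\pi$ to vanish. (This argument runs over $\overline\Q$, where Corollary \ref{cor:deligne} applies; over $\overline\F_q$ it follows because both $\dim H^m(\mathcal X;\Q_\ell)$ and $\dim E_2^{p,q}$ are unchanged under the comparison of Remark \ref{rmk:etale-Q-Fq}, applied to $\mathcal X$ and to the local systems $\mathbf V_{a,b}$ on $\mathcal A_2$.)

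For $\mathcal X = \mathcal X_2^{\Sym(n)}$ I would not repeat the level-cover argument but descend along the finite quotient $\mathcal X_2^n \to \mathcal X_2^{\Sym(n)} = [\mathcal X_2^n/S_n]$. The $S_n$-action on $\mathcal X_2^n$ is over $\mathcal A_2$, hence acts on the whole Leray spectral sequence of $\pi^{(n)}$ by automorphisms of spectral sequences; taking $S_n$-invariants is exact over $\Q_\ell$, so $(E_r^{p,q}(\pi^{(n)}))^{S_n}$ with the restricted differentials is again a spectral sequence. Its $E_2$-page is $H^p(\mathcal A_2; (R^q\pi^{(n)}_*\Q_\ell)^{S_n}) = H^p(\mathcal A_2; H^q(\Sym^n A;\Q_\ell))$ by the identification above, and its abutment is $H^{p+q}(\mathcal X_2^n;\Q_\ell)^{S_n} = H^{p+q}(\mathcal X_2^{\Sym(n)};\Q_\ell)$ by Hochschild--Serre for the $S_n$-cover (again using the characteristic-zero vanishing, as in Remark \ref{rmk:etale-Q-Fq}); so it is the Leray spectral sequence of $\pi$ on $\mathcal X_2^{\Sym(n)}$. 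Since the spectral sequence of $\pi^{(n)}$ degenerates at $E_2$ by the previous paragraph and its differentials restrict to those of the invariant spectral sequence, the latter degenerates at $E_2$ as well.

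The substantive content is entirely in the degeneration, and within it the only delicate point is the descent: one must know that the Leray spectral sequence over the stack $\mathcal A_2$ is recovered as the $G$-invariant part of the Leray spectral sequence of the honest morphism $\pi_N$ of quasiprojective varieties, and, in the symmetric case, that passing to $S_n$-invariants of the Leray spectral sequence of $\pi^{(n)}$ yields that of $\pi$. Everything else --- the existence of the Leray spectral sequence for a morphism of Deligne--Mumford stacks, proper base change, and the identification of $R^q\pi_*\Q_\ell$ with the local systems of Subsection \ref{sec:local-systems} --- is standard.
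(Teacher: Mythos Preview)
Your proof is correct and follows essentially the same strategy as the paper: pass to a level $N \geq 3$ cover where Corollary~\ref{cor:deligne} applies, then descend by taking $\Sp(4,\Z/N\Z)$-invariants via Hochschild--Serre, and for the symmetric case take $S_n$-invariants of the spectral sequence for $\mathcal X_2^n$. The only presentational difference is that the paper constructs the spectral sequence for $\pi$ directly as the $G$-invariant subsequence of the Leray sequence of $\pi_N$ (which then inherits degeneration immediately), whereas you begin with the abstract Leray spectral sequence of the stack morphism and deduce its degeneration by a dimension count; both are valid.
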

\begin{proof}
Let $N \geq 3$ and let $\mathcal X_2[N]^n$ be the $n$th fiber power of $\mathcal X_2[N]$ over $\mathcal A_2[N]$ with respect to the projection map $\mathcal X_2[N] \to \mathcal A_2[N]$. Then $\mathcal A_2[N]$ and $\mathcal X_2[N]^n$ are quasi-projective schemes. By the standard Leray spectral sequence for \'etale cohomology (\cite[Theorem 12.7]{milne-etale}) with $\pi_N: \mathcal X_2[N]^n_{\overline \Q} \to \mathcal A_2[N]_{\overline \Q}$,
\[
  H^p(\mathcal A_2[N]_{\overline \Q}; R^q(\pi_N)_*\Q_\ell) \implies H^{p+q}(\mathcal X_2[N]^n_{\overline \Q}; \Q_\ell)
\]
and this spectral sequence degenerates on the $E_2$-page by Corollary \ref{cor:deligne}. Applying a corollary of the proper base change theorem (\cite[Corollary VI.2.5]{milne-etale-book}) with the torsion (constant) sheaf $\Z/\ell^m\Z$, taking inverse limits, and tensoring with $\Q_\ell$,
\[
  \bigoplus_{p+q = k} H^p(\mathcal A_2[N]_{\overline \Q}; H^q(A^n; \Q_\ell))\cong H^k(\mathcal X_2[N]^n_{\overline \Q}; \Q_\ell) 
\]
as Galois representations up to semi-simplification. Then by the Hochschild--Serre spectral sequence for the $\Sp(4, \Z/N\Z)$-quotient $\mathcal X_2[N]^n_{\overline \Q} \to (\mathcal X_2^n)_{\overline \Q}$, where $\Sp(4, \Z/N\Z)$ acts diagonally on $\mathcal X_2^n[N]$, and the $\Sp(4, \Z/N\Z)$-quotient $\mathcal A_2[N]_{\overline \Q} \to (\mathcal A_2)_{\overline \Q}$,
\[
\bigoplus_{p+q = k} H^p(\mathcal A_2[N]_{\overline \Q}; H^q(A^n; \Q_\ell))^{\Sp(4, \Z/N\Z)} \cong H^{k}(\mathcal X_2[N]^n_{\overline \Q}; \Q_\ell)^{\Sp(4, \Z/N\Z)} \cong H^{k}((\mathcal X_2^n)_{\overline \Q}; \Q_\ell)
\]
and
\[
  H^p(\mathcal A_2[N]_{\overline \Q}; H^q(A^n; \Q_\ell))^{\Sp(4, \Z/N\Z)} \cong H^p((\mathcal A_2)_{\overline \Q}; H^q(A^n; \Q_\ell)),
\]
where on the left, $H^q(A^n)$ is the local system corresponding to the respective $\Sp(4, \Z)[N]$-representation, while on the right, $H^q(A^n)$ is the local system corresponding to the respective $\Sp(4, \Z)$-representation. Therefore, taking $\Sp(4, \Z/N\Z)$-invariants in the spectral sequence for $\pi_N$, which one can do by naturality of that sequence, gives the following $E_2$-page of a spectral sequence
\[
  E_2^{p,q} = H^p((\mathcal A_2)_{\overline \Q}; H^q(A^n; \Q_\ell)) \implies H^{p+q}((\mathcal X_2^n)_{\overline \Q}; \Q_\ell)
\]
degenerating on the $E_2$-page. By Remark \ref{rmk:etale-Q-Fq}, the spectral sequence for $(\mathcal X_2^n)_{\overline \F_q} \to (\mathcal A_2)_{\overline \F_q}$ must also degenerate on the $E_2$-page. Therefore, we have now proven the Proposition for $\mathcal X = (\mathcal X_2^n)_{\overline \Q}$ and $(\mathcal X_2^n)_{\overline\F_q}$. Lastly, again by the Hochschild--Serre spectral sequence,
\[
  H^{k}(\mathcal X_2^{\Sym(n)}; \Q_\ell) \cong H^k(\mathcal X_2^n; \Q_\ell)^{S_n}.
\]
Because $S_n$ acts trivially on $\mathcal A_2$, 
\[
  H^p((\mathcal A_2)_{\overline \Q}; H^q(A^n))^{S_n} \cong H^p((\mathcal A_2)_{\overline \Q}; H^q(A^n)^{S_n}) \cong H^p((\mathcal A_2)_{\overline \Q}; H^q(\Sym^n A))
\]
and
\[
  H^p((\mathcal A_2)_{\overline \F_q}; H^q(A^n))^{S_n} \cong H^p((\mathcal A_2)_{\overline \F_q}; H^q(A^n)^{S_n}) \cong H^p((\mathcal A_2)_{\overline \F_q}; H^q(\Sym^n A))
\]
where $H^q(\Sym^n A)$ is again an $\Sp(4, \Z)$-representation. Again by naturality, taking $S_n$-invariants in the spectral sequence for $(\mathcal X_2^n)_{\overline \Q} \to (\mathcal A_2)_{\overline \Q}$ and $(\mathcal X_2^n)_{\overline \F_q} \to (\mathcal A_2)_{\overline \F_q}$ gives
\begin{align*}
  E_2^{p,q} = H^p((\mathcal A_2)_{\overline \Q}; H^q(\Sym^n A)) \implies H^{p+q}((\mathcal X_2^{\Sym(n)})_{\overline \Q}; \Q_\ell), \\
  E_2^{p,q} = H^p((\mathcal A_2)_{\overline \F_q}; H^q(\Sym^n A)) \implies H^{p+q}((\mathcal X_2^{\Sym(n)})_{\overline \F_q}; \Q_\ell)
\end{align*}
which both degenerate on the $E_2$-page.
\end{proof}

\section{Cohomology of the Universal Abelian Surface}\label{sec:universal-surface}
In this section, we study the cohomology of $\mathcal X_2$ using $\pi: \mathcal X_2 \to \mathcal A_2$. We first need to compute the following local systems.

\begin{lem}\label{lem:X2-local-systems}
There are isomorphisms of local systems
\[
  H^k(A; \Q_\ell) \cong \begin{cases}
  \Q_\ell & k = 0\\
  \mathbf V_{1, 0} & k = 1 \\
  \Q_\ell(-1) \oplus \mathbf V_{1, 1} & k = 2 \\
  \mathbf V_{1,0}(-1) & k = 3 \\
  \Q_\ell(-2) & k = 4 \\
  0 & k > 4.
  \end{cases}
\]
\end{lem}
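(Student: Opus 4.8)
The plan is to identify the $\Sp(4,\Z)$-representation structure on $H^k(A;\Q_\ell)$ using Fact~\ref{fact:monodromy-action}, which tells us that as a representation, $H^k(A;\Q_\ell)\cong\bigwedge^k V_{1,0}$ where $V_{1,0}$ is the standard $4$-dimensional representation. So the task splits into two parts: (1) decompose each exterior power $\bigwedge^k V_{1,0}$ into irreducibles $V_{a,b}$, and (2) pin down the correct Tate twist on each summand. For part (1), the cases $k=0$ and $k=1$ are immediate: $\bigwedge^0 V_{1,0}=\Q_\ell$ (trivial) and $\bigwedge^1 V_{1,0}=V_{1,0}$. For $k=2$, the symplectic form gives a canonical $\Sp(4)$-equivariant map $\bigwedge^2 V_{1,0}\to\Q_\ell$ (the trivial representation), and the kernel is the irreducible representation $V_{1,1}$ of dimension $5$; since $\dim\bigwedge^2 V_{1,0}=6=1+5$, this gives $\bigwedge^2 V_{1,0}\cong\Q_\ell\oplus V_{1,1}$. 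For $k=3,4$, I would invoke the symplectic (Lefschetz-type) duality $\bigwedge^k V_{1,0}\cong\bigwedge^{4-k}V_{1,0}$ coming from the nondegenerate pairing $\bigwedge^k V_{1,0}\otimes\bigwedge^{4-k}V_{1,0}\to\bigwedge^4 V_{1,0}\cong\Q_\ell$, so $\bigwedge^3 V_{1,0}\cong V_{1,0}$ and $\bigwedge^4 V_{1,0}\cong\Q_\ell$ as representations; for $k>4$ the exterior powers vanish since $\dim A=2$ gives real dimension $4$. This is all standard representation theory of $\Sp(4)$ (see \cite[Section 17.3]{fulton--harris} or the setup in Subsection~\ref{sec:local-systems}).

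For part (2), the Tate twists are dictated by Hodge/Galois weight. The local system $\mathbf V_{1,0}=R^1\pi_*\Q_\ell$ has weight $1$ by construction (Subsection~\ref{sec:local-systems}), and the cup product $H^k(A;\Q_\ell)=\bigwedge^k H^1(A;\Q_\ell)$ forces $H^k(A;\Q_\ell)$ to have weight $k$. Combined with the Weil pairing $\bigwedge^2\mathbf V_{1,0}\to\Q_\ell(-1)$ recorded in Subsection~\ref{sec:local-systems}, the trivial summand in $H^2$ must be $\Q_\ell(-1)$ rather than $\Q_\ell$, and $\mathbf V_{1,1}$ (defined as a summand of $\mathbf V_{1,0}^{\otimes 2}$) carries weight $2$ with no extra twist needed. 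For $H^3$, the perfect pairing $H^3(A)\otimes H^1(A)\to H^4(A)\cong\Q_\ell(-2)$ identifies $H^3(A;\Q_\ell)\cong\mathbf V_{1,0}^*(-2)\cong\mathbf V_{1,0}(-1)$ (using the self-duality $\mathbf V_{1,0}^*\cong\mathbf V_{1,0}(1)$ from the symplectic form). For $H^4$, $H^4(A;\Q_\ell)\cong\Q_\ell(-2)$ is just the fundamental class, which has weight $4$, i.e.\ is a Tate twist by $-2$. Thus each entry in the statement is determined.

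The main obstacle — really the only nontrivial point — is making sure the Tate twists are stated with the correct sign convention and are internally consistent with the definitions of $\mathbf V_{a,b}(n)$ and the Weil pairing as set up in Subsection~\ref{sec:local-systems}; the representation-theoretic decomposition itself is routine. A clean way to organize the twist bookkeeping is to work stalk-wise: at a point $[A]\in\mathcal A_2$, use the Künneth/exterior-power description $H^k(A;\Q_\ell)=\bigwedge^k H^1(A;\Q_\ell)$ together with the fact that $H^2(A;\Q_\ell)$ contains the class of the principal polarization spanning a copy of $\Q_\ell(-1)$ and that $H^4(A;\Q_\ell)=\Q_\ell(-2)$ is the trace/orientation class, then propagate these identifications across $\mathcal A_2$ using that $\mathbf V_{1,0}=R^1\pi_*\Q_\ell$ and proper base change (\cite[Corollary VI.2.5]{milne-etale-book}). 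No serious computation is involved; the content is identifying the irreducible constituents and matching weights.
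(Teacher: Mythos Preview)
Your proposal is correct and follows essentially the same approach as the paper: identify $H^k(A;\Q_\ell)\cong\bigwedge^k\mathbf V_{1,0}$ via Fact~\ref{fact:monodromy-action}, decompose the exterior powers into irreducible $\Sp(4)$-representations, and then determine the Tate twists. The paper is terser---it cites \cite[Chapter 16]{fulton--harris} for the decomposition and pins down the twists by observing that $\Frob_p$ acts through $\GSp(4,\Q_\ell)$---whereas you spell out the decomposition by hand and justify the twists via weight/Poincar\'e-duality arguments, but these are equivalent presentations of the same content.
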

\begin{proof}
By \cite[p. 238]{faltings--chai}, smooth $\ell$-adic sheaves on $\mathcal A_2$ correspond to continuous representations of the arithmetic fundamental group $\pi_1(\mathcal A_2)$ of $\mathcal A_2$ after choosing a base point; we can view $\GSp(4)$ as the arithmetic fundamental group of $\mathcal A_2$ after a choice of base point (cf. \cite[p. 6]{v-rank-1-eisenstein}). For any abelian surface $A$, there is an isomorphism $\bigwedge^k H^1(A; \Q_\ell) \to H^k(A; \Q_\ell)$ for all $k \geq 0$ given by the cup-product pairing by \cite[Theorem 12.1]{milne-abvar}. Therefore, there is an isomorphism of local systems between $H^k(A; \Q_\ell)$ and $\bigwedge^k \mathbf V_{1, 0}$, the local system corresponding to the $\GSp(4, \Q_\ell)$-representation $\bigwedge^k V_{1,0}$. We deccompose the local system $\bigwedge^k \mathbf V_{1,0}$ into a direct sum of local systems of the form $\mathbf V_{a,b}(n)$ with $a \geq b \geq 0$ and $n \in \Z$ corresponding to irreducible $\GSp(4, \Q_\ell)$-representations.

We first consider the decomposition of the $\GSp(4, \Q_\ell)$-representation $\bigwedge^k V_{1,0}$ into irreducible $\GSp(4, \Q_\ell)$-representations, where $V_{a,b}$ denotes the irreducible $\GSp(4, \Q_\ell)$-representation corresponding to the partition $a\geq b \geq 0$ as explained in Section \ref{sec:local-systems}. Recall also that $W_{a,b}$ denotes the irreducible $\Sp(4, \Q_\ell)$-representation corresponding to the partition $a \geq b \geq 0$. The decomposition of $\bigwedge^k W_{1,0}$ as $\Sp(4, \Q_\ell)$-representation is
\[
  \bigwedge^k W_{1,0} \cong \begin{cases}
  W_{0,0} & k = 0, 4, \\
  W_{1,0} & k = 1, 3, \\
  W_{0,0} \oplus W_{1,1} & k = 2
  \end{cases}
\]
as given in \cite[Chapter 16]{fulton--harris}. 

The $\Sp(4, \Q_\ell)$-representation decomposition above determines the corresponding $\GSp(4, \Q_\ell)$-representation $\bigwedge^k V_{1,0}$ up to tensoring by the multiplier representation $\eta$ (see Section \ref{sec:local-systems}). In particular, this means that if $W_{a,b} \subseteq \bigwedge^k W_{1,0}$ for some $a \geq b \geq 0$ then there exists some $q \in \Z$ such that $V_{a,b} \otimes \eta^{q} \subseteq \bigwedge^k V_{1,0}$. To determine the $q$ for each summand $V_{a,b} \otimes \eta^q \subseteq \bigwedge^k V_{1,0}$, it will suffice to consider the action of all scalar matrices $m I_4 \in \GSp(4, \Q_\ell)$ on $\bigwedge^k V_{1,0}$. 

Recall that $V_{1,0}$ is the contragredient of the standard representation as a $\GSp(4, \Q_\ell)$-representation $V$, i.e. $V_{1,0} \cong V \otimes \eta^{-1}$. Apply the definition of the multiplier representation from Section \ref{sec:local-systems} to see that $\eta(mI_4) = m^2$. For any $v \in V_{a,b} \subseteq V_{1,0}^{\otimes(a+b)} = V^{\otimes(a+b)} \otimes \eta^{-(a+b)}$, 
\[
  m I_4 \cdot v = (m^{(a+b)} v) \cdot m^{-2(a+b)} = m^{-(a+b)} v
\]
and so $mI_4 \cdot w = m^{-(a+b) +2q}w$ for any $w \in V_{a,b} \otimes \eta^q \subseteq V_{1,0}^{\otimes (a+b)}\otimes \eta^q$. On the other hand, for any $w \in V_{a,b} \otimes \eta^q \subseteq \bigwedge^k V_{1,0} \subseteq V_{1,0}^{\otimes k}$,
\[
  m I_4 \cdot w = (m^kw) \cdot m^{-2k} = m^{-k}w.
\]
This shows that $m^{-(a+b) + 2q}w = m^{-k}w$, which implies that $-k = -(a+b)+2q$ and so $q = \frac{(a+b)-k}{2}$. Therefore as $\GSp(4, \Q_\ell)$-representations,
\[
  \bigwedge^k V_{1,0} \cong \begin{cases}
  V_{0,0} & k = 0,\\
  V_{1,0} & k = 1,\\
  V_{0,0}\otimes \eta^{-1} \oplus V_{1,1} & k = 2, \\
  V_{1,0} \otimes \eta^{-1} & k = 3, \\
  V_{0,0}\otimes \eta^{-2} & k = 4, \\
  0 & k > 4.
  \end{cases}
\]
Tensoring by $\eta(q)$ corresponds to tensoring with $\Q_\ell(q)$ for all $n \in \Z$ (see Section \ref{sec:local-systems}). Rewriting the above decomposition of $\bigwedge^k V_{1,0}$ in terms of local systems proves this lemma.
\end{proof}

Our main tool is \cite[Theorem 2.1]{petersen}, restated below for convenience. Before we do so, we need to establish some notation, which agrees with that of \cite{petersen}.

Let $s_k$ be the dimension of the space of cusp forms of $\SL(2, \Z)$ of weight $k$. For $j \geq 0$ and $k \geq 3$, let $s_{j,k}$ be the dimension of the space of vector-valued Siegel cusp forms for $\Sp(4, \Z)$ transforming according to the representation $\Sym^j \otimes \det^k$. Let $\rho_f$ be the $2$-dimensional $\ell$-adic Galois representation of weight $k-1$ of the normalized cusp eigenform $f$ for $\SL(2, \Z)$, as given by \cite{deligne-1969}, and let $S_k = \bigoplus_f \rho_f$ be the direct sum of such representations for $k$. Let $\tau_f$ be the $4$-dimensional $\ell$-adic Galois representation of the vector-valued Siegel cusp eigenform $f$ of type $\Sym^j \otimes \det^k$ as given by \cite{weissauer}, and let $S_{j,k} = \bigoplus_f \tau_f$. Let $s_k'$ be the number of normalized cusp eigenforms of weight $k$ for $\SL(2, \Z)$ for which $L(f, \frac 12)$ vanishes. 

Finally, we need to describe the Galois representations $\overline S_{j,k}$. These representations satisfy the condition that $\overline S_{j,k} = S_{j,k}$ if $j \neq 0$ or $k \equiv 1 \pmod 2$. Otherwise, $\overline S_{j,k}$ is a subrepresentation of $S_{j,k}$ that can be determined in a prescribed way. Because the only property of $\overline S_{j,k}$ we will use in this paper is that it is a subrepresentation of $S_{j,k}$, we refer the reader to \cite[p. 3]{petersen} for the specific definition.

Because every abelian surface has an involution which acts by multiplication by $(-1)^k$ on each stalk of $\mathbf V_{1,0}^{\otimes k}$, and each $\mathbf V_{a, b}$ is a summand of $\mathbf V_{1,0}^{\otimes(a+b)}$, the cohomology $H^p(\mathcal A_2; \mathbf V_{a, b})$ vanishes if $a+b$ is odd. (See \cite[\S 1]{faber-van-der-geer-1} or \cite[p. 3]{petersen}.)
\begin{thm}[Petersen, {\cite[Theorem 2.1]{petersen}}]\label{thm:petersen}
Suppose $(a, b) \neq (0, 0)$, and that $a + b$ is even. Then

\begin{enumerate}
  \item In degrees $k \neq 2$, $3$, $4$, 
  \[
    H_c^k(\mathcal A_2; \mathbf V_{a, b}) = 0. 
  \]

  \item In degree $4$,
  \[
    H_c^4(\mathcal A_2; \mathbf V_{a, b}) = \begin{cases}
    s_{a+b+4}\Q_\ell(-b-2) & a = b\text{ even} \\
    0 & \text{otherwise}.
    \end{cases}
  \]

  \item In degree $3$, up to semi-simplification,
  \begin{align*}
    H^3_c(\mathcal A_2; \mathbf V_{a, b}) = &\overline S_{a - b, b+3} \oplus s_{a+b+4} S_{a-b+2}(-b-1) \oplus S_{a+3} \\
    &\oplus \begin{cases}
    s'_{a+b+4}\Q_\ell(-b-1) & a = b\text{ even}\\
    s_{a+b+4}\Q_\ell(-b-1) & \text{otherwise}
    \end{cases} \\
    &\oplus \begin{cases}
    \Q_\ell & a = b \text{ odd}\\
    0 & \text{otherwise}
    \end{cases} \\
    &\oplus \begin{cases}
    \Q_\ell(-1) & b = 0\\
    0 & \text{otherwise}.
    \end{cases}
  \end{align*}

  \item In degree $2$, up to semi-simplification,
  \begin{align*}
  H_c^2(\mathcal A_2; \mathbf V_{a, b}) = &S_{b+2} \oplus s_{a-b+2}\Q_\ell \\
  &\oplus \begin{cases}
  s'_{a+b+4}\Q_\ell(-b-1) & a = b\text{ even}\\
  0 & \text{otherwise}
  \end{cases} \\
  &\oplus \begin{cases}
  \Q_\ell & a > b > 0 \text{ and } a, b \text{ even}\\
  0 & \text{otherwise}.
  \end{cases}
  \end{align*}
\end{enumerate}
\end{thm}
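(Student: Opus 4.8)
The plan is to compute $H^*_c(\mathcal A_2; \mathbf V_{a,b})$ by stratifying $\mathcal A_2$ according to the Torelli morphism. A principally polarized abelian surface is either the Jacobian of a smooth genus $2$ curve or a polarized product of two elliptic curves, so $\mathcal A_2 = \mathcal M_2 \sqcup \Sym^2\mathcal A_1$, where the locus $\mathcal M_2$ of Jacobians is open and the decomposable locus $\Sym^2\mathcal A_1 = [(\mathcal A_1 \times \mathcal A_1)/S_2]$ is closed of codimension $1$. Excision in compactly supported cohomology then gives a long exact sequence
\[
  \cdots \to H^k_c(\mathcal M_2; \mathbf V_{a,b}) \to H^k_c(\mathcal A_2; \mathbf V_{a,b}) \to H^k_c(\Sym^2\mathcal A_1; \mathbf V_{a,b}) \to H^{k+1}_c(\mathcal M_2; \mathbf V_{a,b}) \to \cdots
\]
with no Tate twist, since the twist of the Gysin map intervenes only for ordinary cohomology. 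Working up to semisimplification throughout, this reduces the problem to (i) computing the two strata contributions and (ii) placing the resulting classes in the correct cohomological degrees. Step (ii) I would get largely for free from weight purity: Deligne's bounds (Theorem \ref{thm:weights}) together with Poincar\'e duality (Theorem \ref{thm:poincare-duality}) force the heaviest classes into $H^4_c$, the pure cuspidal classes into $H^3_c$, and the lighter Eisenstein-type classes into $H^2_c$, while vanishing outside $\{2,3,4\}$ follows because $\mathcal M_2$ and $\mathcal A_1 \times \mathcal A_1$ are affine (so their $H^k_c$ vanish below their dimension) together with duality.

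For the decomposable locus I would pull $\mathbf V_{a,b}$ back along the finite cover $\mathcal A_1 \times \mathcal A_1 \to \Sym^2\mathcal A_1$ and use the branching rule for $\Sp(4) \downarrow \SL(2)\times\SL(2)$ to decompose the restriction of $V_{a,b}$ into external tensor products $\Sym^i \boxtimes \Sym^j$ of standard representations. The K\"unneth formula then reduces everything to the groups $H^*_c(\mathcal A_1; \Sym^i)$, which are given by Eichler--Shimura theory: for $i$ even and positive, $H^1_c(\mathcal A_1; \Sym^i) \cong S_{i+2} \oplus \Q_\ell$ (cusp form part plus one untwisted Eisenstein class) and the other $H^k_c$ vanish, while $H^*_c(\mathcal A_1)$ is elementary. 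Taking $S_2$-invariants of the K\"unneth answer, keeping careful track of the sign with which the swap acts both on the two factors and on the branching summands, produces the terms $S_{b+2}$, $s_{a-b+2}\Q_\ell$, the extra $\Q_\ell$ when $a > b > 0$ with $a,b$ even, and their Tate twists, and accounts for the parity conditions in parts (2)--(4).

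For the Jacobian locus I would exploit that every genus $2$ curve is hyperelliptic, so that (up to the $\mu_2$-gerbe coming from the hyperelliptic involution) $\mathcal M_2 \cong [M_{0,6}/S_6]$, the moduli of six unordered points on $\mathbf P^1$, and $\mathbf V_{a,b}$ is pulled back from an explicit local system on the hyperplane complement $M_{0,6}$ built from $H^1$ of the double cover. The $S_6$-equivariant cohomology of these local systems on $M_{0,6}$ is where the Siegel modular contributions $\overline S_{a-b,b+3}$, $s_{a+b+4}S_{a-b+2}(-b-1)$, and $S_{a+3}$ enter; I would import this as known input (the genus-two point counts combined with purity, or equivalently the automorphic description of the interior cohomology $H^3_!(\mathcal A_2; \mathbf V_{a,b})$ via \cite{faltings--chai} and \cite{weissauer} together with Harder's Eisenstein cohomology of $\GSp(4)$). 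Assembling the long exact sequence, and pinning down each Tate twist by matching weights across the two strata, then yields the stated formulas.

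The main obstacle I expect is the precise shape of the interior cohomology in degree $3$, and in particular the appearance of $\overline S_{j,k}$ rather than all of $S_{j,k}$ when $j = 0$ and $k$ is even: this reflects that for scalar-valued Siegel cusp forms the Saito--Kurokawa packet splits into a holomorphic and a non-holomorphic part according to the sign of the functional equation, equivalently according to whether the central value $L(f,\tfrac12)$ of the associated elliptic newform vanishes, which is exactly what the constants $s'_{a+b+4}$ record. Getting this bookkeeping right, while simultaneously keeping every Tate twist consistent across the three strata, is the delicate part; the remainder is branching-rule combinatorics, K\"unneth, and weight and degeneration arguments.
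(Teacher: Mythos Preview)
The paper does not prove this theorem at all: it is quoted verbatim from \cite[Theorem 2.1]{petersen} and used as a black box. So there is no ``paper's own proof'' to compare against; the author simply imports Petersen's result as input to the Leray spectral sequence computations in Sections \ref{sec:universal-surface}--\ref{sec:symmetric-powers}.

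That said, your outline is essentially the strategy Petersen himself uses. The Torelli stratification $\mathcal A_2 = \mathcal M_2 \sqcup \Sym^2\mathcal A_1$, the branching for $\Sp(4) \downarrow \SL(2)\times\SL(2)$ combined with Eichler--Shimura on each $\mathcal A_1$ factor, and the hyperelliptic model $\mathcal M_2 \simeq [M_{0,6}/S_6]$ (up to the $\mu_2$-gerbe) are exactly the ingredients of his argument. Your identification of the Saito--Kurokawa bookkeeping as the delicate point is also accurate: distinguishing $\overline S_{j,k}$ from $S_{j,k}$ in the scalar-valued even-weight case is where most of the real work lies, and it is not something you can extract from the stratification and weight arguments alone---it requires genuine automorphic input (Weissauer's results on the four-dimensional Galois representations and the structure of CAP representations). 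Your sketch is honest about importing this, which is the right attitude, but be aware that this importation is the heart of the theorem rather than a side condition.
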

\begin{rmk}
Although we do not need the full power of Theorem \ref{thm:petersen} to compute $H^*(\mathcal X_2; \Q_\ell)$, the existence of such a result is important for the calculations in Sections \ref{sec:fiber-powers} and \ref{sec:symmetric-powers}.
\end{rmk}

We now give examples of computations using Theorem \ref{thm:petersen}. The next corollary gives all applications of this theorem that we explicitly use in the rest of the paper. All results are up to semi-simplification.
\begin{cor}\label{cor:petersen}
For all integers $k \geq 0$, 
\begin{align}
  H^1(\mathcal A_2; \mathbf V_{a,b}) &= 0 \quad\quad \text{for all }a \geq b \geq 0, \label{eqn:cor-petersen-deg1}\\
  H^k(\mathcal A_2; \mathbf V_{1, 1}) &= \begin{cases}
  \Q_\ell(-5) & k = 3 \\
  0 & \text{otherwise,}
  \end{cases} \label{eqn:cor-petersen-1,1} \\
  H^k(\mathcal A_2; \mathbf V_{2,0}) &= \begin{cases}
  \Q_\ell(-4) & k = 3 \\
  0 & \text{otherwise,}
  \end{cases} \label{eqn:cor-petersen-2,0}\\
  H^k(\mathcal A_2; \mathbf V_{2,2}) &= 0 \quad\quad \text{for all }k \geq 0. \label{eqn:cor-petersen-2,2}
\end{align}
\end{cor}
\begin{proof}
The smallest weight possible for nonzero cusp forms of $\SL(2, \Z)$ is $12$. (For example, see \cite[Theorem 7.4]{serre-arithmetic}.) Thus $s_k' = s_k = 0$ and $S_k = 0$ for all $k < 12$. 

By Theorem \ref{thm:petersen},
\begin{align*}
H_c^5(\mathcal A_2; \mathbf V_{a,b}) &= 0 \quad\quad \text{for all }a \geq b \geq 0, \\
H_c^4(\mathcal A_2; \mathbf V_{1, 1}) &= 0, \\
H_c^3(\mathcal A_2; \mathbf V_{1, 1}) &= \overline S_{0, 4} \oplus s_6S_2(-2) \oplus S_4 \oplus s_6\Q_\ell(-2) \oplus \Q_\ell = \overline S_{0,4} \oplus \Q_\ell, \\
H_c^2(\mathcal A_2; \mathbf V_{1, 1}) &= S_3 \oplus s_2\Q_\ell = 0, \\
H_c^4(\mathcal A_2; \mathbf V_{2,0}) &= 0, \\
H_c^3(\mathcal A_2; \mathbf V_{2,0}) &= \overline S_{2, 3} \oplus s_6 S_4(-1) \oplus S_5 \oplus s_6\Q_\ell(-1) \oplus \Q_\ell(-1) = \overline S_{2,3} \oplus \Q_\ell(-1), \\
H_c^2(\mathcal A_2; \mathbf V_{2,0}) &= S_2 \oplus s_4\Q_\ell = 0, \\
H_c^4(\mathcal A_2; \mathbf V_{2,2}) &= s_8\Q_\ell(-4) = 0, \\
H_c^3(\mathcal A_2; \mathbf V_{2,2}) &= \overline S_{0, 5} \oplus s_8 S_2(-3) \oplus S_5 \oplus s'_{8}\Q_\ell(-3) = \overline S_{0,5}, \\
H_c^2(\mathcal A_2; \mathbf V_{2,2}) &= S_4 \oplus s_2\Q_\ell \oplus s'_8 \Q_\ell(-3) = 0.
\end{align*}
By computations in \cite[p. 249]{wakatsuki}, 
\begin{align*}
\overline S_{0, 4} &\subseteq S_{0, 4} = 0, \\
\overline S_{0, 5} &\subseteq S_{0 ,5} = 0.
\end{align*} 
By \cite[Lemma 2.1]{ibukiyama}, $S_{j,k} = 0$ for all $0 \leq k \leq 4$ and $j \leq 14$, and so 
\[
  \overline S_{2,3} \subseteq S_{2,3} = 0.
\]
Finally, by Poincar\'e Duality (Theorem \ref{thm:poincare-duality}), 
\begin{align*}
H^1(\mathcal A_2; \mathbf V_{a,b})^* &\cong H_c^5(\mathcal A_2; \mathbf V_{a,b}) \otimes \Q_\ell(3+a+b) = 0 \quad\quad \text{for all } a \geq b \geq 0,\\
H^{k}(\mathcal A_2; \mathbf V_{1,1})^* &\cong H_c^{6-k}(\mathcal A_2; \mathbf V_{1,1}) \otimes \Q_\ell(5) = \begin{cases}
\Q_\ell(5) & k = 3 \\
0 & \text{otherwise,}
\end{cases}\\
H^{k}(\mathcal A_2; \mathbf V_{2, 0})^* &\cong H_c^{6-k}(\mathcal A_2; \mathbf V_{2,0}) \otimes \Q_\ell(5) = \begin{cases}
\Q_\ell(4) & k = 3 \\
0 & \text{otherwise,} 
\end{cases}\\
H^{k}(\mathcal A_2; \mathbf V_{2,2})^* &\cong H_c^{6-k}(\mathcal A_2; \mathbf V_{2,2}) \otimes \Q_\ell(7) = 0 \quad\quad \text{for all }k \geq 0. \qedhere
\end{align*}
\end{proof}

With the above preliminaries in hand, we can now prove our first main result, Theorem \ref{thm:universal-surface}. We restate the theorem here for convenience.
\begin{thm:universal-surface}
The cohomology of the universal abelian surface $\mathcal X_2$ is given by
\[
  H^k(\mathcal X_2; \Q_\ell) = \begin{cases}
  \Q_\ell & k = 0 \\
  0 & k = 1, 3, k \geq 7\\
  2\Q_\ell(-1) & k = 2 \\
  2\Q_\ell(-2) & k = 4 \\
  \Q_\ell(-5) & k = 5 \\
  \Q_\ell(-3) & k = 6 \\
  \end{cases}
\]
up to semi-simplification. 
\end{thm:universal-surface}
\begin{proof}
By Proposition \ref{prop:leray}, there is a spectral sequence
\[
  E_2^{p, q} = H^p(\mathcal A_2; H^q(A; \Q_\ell)) \implies H^{p+q}(\mathcal X_2; \Q_\ell)
\]
which degenerates on the $E_2$-page. Combining Lemma \ref{lem:X2-local-systems}, Theorem \ref{thm:a2}, and Corollary \ref{cor:petersen} shows that the $E_2$-page is as in Figure \ref{fig:X2-spectral-sequence}. The theorem now follows directly, since up to semi-simplification,
\[
  H^k(\mathcal X_2; \Q_\ell) = \bigoplus_{p+q = k} E_2^{p, q}. \qedhere
\]
\end{proof}

\begin{figure}
\centering
\includegraphics{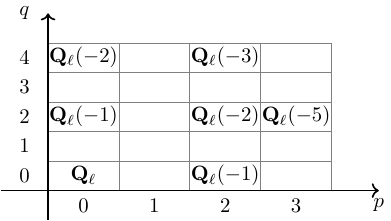}
\caption{The nonzero terms on the $E_2$-page of the Leray spectral sequence for $\pi: \mathcal X_2 \to \mathcal A_2$.}
\label{fig:X2-spectral-sequence}
\end{figure}

\section{Cohomology of Fiber Powers of the Universal Abelian Surface}\label{sec:fiber-powers}
In this section, we compute $H^*(\mathcal X_2^n; \Q_\ell)$. In particular, we give a procedure for computing this for general $n$, and then give the specific results that follow for the case $n = 2$. For brevity, we omit the coefficients when it is clear that we mean the constant ones and write $H^*(X)$ to mean $H^*(X; \Q_\ell)$. 
\subsection{Computations for general $n$.}\label{sec:fiber-powers-n}
Let $\pi: \mathcal X_2 \to \mathcal A_2$ and let $\pi^n: \mathcal X_2^n \to \mathcal A_2$. We first need to consider the following local systems.
\begin{lem}\label{lem:X2-n-local-systems}
There are isomorphisms of local systems
\begin{align*}
  H^k(A^n) &\cong \bigoplus_{\sum_{i=1}^n k_i = k} \left(\bigotimes_i H^{k_i}(A)\right)\\
  &\cong H^k(A^{n-1}) \oplus \left(\mathbf V_{1,0} \otimes H^{k-1}(A^{n-1}) \right) \oplus \left((\Q_\ell(-1) \oplus \mathbf V_{1,1}) \otimes H^{k-2}(A^{n-1})\right) \\
  &\quad \oplus \left(\mathbf V_{1,0}(-1) \otimes H^{k-3}(A^{n-1}) \right) \oplus H^{k-4}(A^{n-1})(-2)
\end{align*}
where we say $H^k(A^{n-1}) = 0$ if $k < 0$. For appropriate constants $m_{a,b}(H^k(A^n))$,
\begin{equation}\label{eqn:tate-twists}
  H^k(A^n) \cong \bigoplus_{a \geq b \geq 0} m_{a,b}(H^k(A^n))\mathbf V_{a,b}\left(\frac{a+b- k}{2}\right).
\end{equation}
\end{lem}
\begin{proof}
The K\"unneth isomorphism applied to the local system $H^k(A^n)$ says
\begin{align*}
  H^k(A^n) &\cong \bigoplus_{k_1 + k_2 = k} H^{k_1}(A^{n-1}) \otimes H^{k_2}(A) \\
  &\cong \bigoplus_{k_2 = 0}^4 H^{k_2}(A) \otimes H^{k-k_2}(A^{n-1}),
\end{align*}
from which the first half of the lemma follows from Lemma \ref{lem:X2-local-systems}. 

For the third isomorphism, the same proof as that of Lemma \ref{lem:X2-local-systems} applies as follows. As explained in Section \ref{sec:local-systems}, $W_{a,b}$ denotes the irreducible $\Sp(4, \Q_\ell)$-representation corresponding to the partition $a \geq b \geq 0$ and $V_{a,b}$ denotes an irreducible $\GSp(4, \Q_\ell)$-representation whose underlying $\Sp(4, \Q_\ell)$-representation is $W_{a,b}$. For each summand in the first isomorphism, suppose as $\Sp(4, \Q_\ell)$-representations
\[
  W_{a,b} \subseteq \bigotimes_i H^{k_i}(A) \cong \bigotimes_i \left(\bigwedge^{k_i} H^1(A)\right)
\]
which implies that for some $q \in \Z$, $V_{a,b} \otimes \eta^q \subseteq \bigotimes_i \left(\bigwedge^{k_i} V_{1,0}\right) \subseteq V_{1,0}^{\otimes k}$ as $\GSp(4, \Q_\ell)$-representations. Exactly the same computation as in the proof of Lemma \ref{lem:X2-local-systems} by applying scalar matrices $m I_4 \in \GSp(4, \Q_\ell)$ to both $V_{1,0}^{\otimes k}$ and $V_{a,b}\otimes \eta^q$ shows that $q = \frac{a+b-k}{2}$. Finally, note that tensoring with $\eta$ corresponds to Tate twists, which proves the last isomorphism.
\end{proof}

In order to apply Theorem \ref{thm:petersen} like in Section \ref{sec:universal-surface}, we need to decompose each $H^k(A^n; \Q_\ell)$ into irreducible representations of $\Sp(4, \Q_\ell)$. Consider the restriction of the irreducible $\GSp(4, \Q_\ell)$-representation $V_{a,b}$ (corresponding to the partition $a \geq b \geq 0$) to $\Sp(4,\Q_\ell)$; as in Section \ref{sec:local-systems}, we denote this irreducible $\Sp(4, \Q_\ell)$-representation by $W_{a,b}$. We account for the $\GSp(4, \Q_\ell)$-representation structures at the end by using Lemma \ref{lem:X2-n-local-systems}(\ref{eqn:tate-twists}). We set $W_{a,b} := 0$ if $a < b$ or $b < 0$. Lemma \ref{lem:X2-n-local-systems} gives rise to a recursive computation (in $n$) for this decomposition to which we will apply the following two lemmas:
\begin{lem}\label{lem:rep-thy-1,0}
For $a \geq b \geq 0$, 
\[
  W_{1, 0} \otimes W_{a, b} = W_{a-1, b} \oplus W_{a+1, b} \oplus W_{a, b-1} \oplus W_{a, b+1}. 
\]
\end{lem}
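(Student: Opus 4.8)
The plan is to specialize Proposition~\ref{prop:tensors} to the case $(A,B) = (1,0)$, where the Littlewood--Richardson combinatorics collapses to something completely explicit. Here $\lambda = (1)$, a single box. In the formula $N_{\lambda\mu\nu} = \sum_{\zeta,\sigma,\tau} c^\lambda_{\zeta\sigma} c^\mu_{\zeta\tau} c^\nu_{\sigma\tau}$ with $\lambda = (1)$, the coefficient $c^{(1)}_{\zeta\sigma}$ forces $\{\zeta,\sigma\}$ to be $\{(1),\emptyset\}$ in some order, since $(1)$ can only be split into a single box and the empty partition. So there are exactly two contributing terms: either $\zeta = \emptyset,\ \sigma = (1)$, or $\zeta = (1),\ \sigma = \emptyset$.

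In the first case ($\zeta = \emptyset$, $\sigma = (1)$), we get $c^\mu_{\emptyset\tau} = \delta_{\mu\tau}$, so $\tau = \mu = (a,b)$, and the contribution is $c^\nu_{(1),(a,b)}$, the number of ways to add a single box to the partition $(a,b)$ to obtain $\nu$. By Pieri's rule this is $1$ exactly when $\nu \in \{(a+1,b),(a,b+1)\}$ (and is a valid partition, i.e. $b+1 \le a$; the term $(a,b+1)$ simply does not arise as a dominant weight when $b = a$, which is consistent with the statement). In the second case ($\zeta = (1)$, $\sigma = \emptyset$), we need $c^\mu_{(1)\tau}$, forcing $\mu$ to be obtained from $\tau$ by adding a box, i.e. $\tau \in \{(a-1,b),(a,b-1)\}$, and then $c^\nu_{\emptyset\tau} = \delta_{\nu\tau}$ gives $\nu = \tau$. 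So the second case contributes $V_{a-1,b} \oplus V_{a,b-1}$, again with the convention that a term is omitted if it fails to be a valid partition (e.g. $(a-1,b)$ when $b > a-1$, or $(a,b-1)$ when $b = 0$ — where indeed $V_{a,-1}$ is zero). Assembling the two cases gives precisely $V_{1,0}\otimes V_{a,b} = V_{a-1,b}\oplus V_{a+1,b}\oplus V_{a,b-1}\oplus V_{a,b+1}$, with the understanding that summands indexed by non-partitions are dropped.

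The only genuine subtlety — and the step I would flag as needing a line of care rather than being truly hard — is the boundary behavior: one must check that the combinatorial count agrees with the convention that $V_{a,b}$ is defined (and nonzero) only for $a \ge b \ge 0$, and that in the degenerate cases ($b = 0$, or $a = b$) the "missing" summands on the right-hand side are exactly the ones that the Littlewood--Richardson rule also declines to produce because the relevant skew shape or content is not admissible. Concretely: when $b = 0$ the term $V_{a,b-1} = V_{a,-1}$ should vanish, and indeed there is no semistandard filling producing a partition below $(a,0)$ by removing a box from row $2$; when $a = b$ the term $V_{a,b+1} = V_{a,a+1}$ is not dominant, and correspondingly $c^{(a,a+1)}_{(1),(a,a)} = 0$ since $(a,a+1)$ is not a partition. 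This is a short case check. Alternatively — and this is the cleaner route I would actually write — one can avoid re-deriving it from Proposition~\ref{prop:tensors} entirely and instead invoke the classical branching/Pieri rule for $\mathrm{Sp}_4$: tensoring with the standard representation $V_{1,0}$ adds or removes one box from the Young diagram $(a,b)$ in all admissible ways, which for rank $2$ is exactly the four-term formula stated. Either presentation is a few lines; I would lead with the $\mathrm{Sp}_4$ Pieri rule and note it is the $(A,B)=(1,0)$ case of Proposition~\ref{prop:tensors}.
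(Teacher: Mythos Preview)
Your proposal is correct and takes essentially the same approach as the paper: both specialize Proposition~\ref{prop:tensors} to $\lambda=(1,0)$, split into the two cases $(\zeta,\sigma)=((1,0),(0,0))$ and $((0,0),(1,0))$, and read off the four possible $\nu$ via the Littlewood--Richardson/Pieri count. The paper writes out the tableau-counting slightly more explicitly rather than invoking Pieri by name, and does not mention your alternative route through the $\mathrm{Sp}_4$ Pieri rule, but the substance of the argument is the same.
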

\begin{lem}\label{lem:rep-thy-1,1}
If $a > b \geq 0$, then
\[
  W_{1, 1} \otimes W_{a, b} = W_{a-1, b-1} \oplus W_{a-1, b+1} \oplus W_{a, b} \oplus W_{a+1, b-1}\oplus W_{a+1, b+1}.
\]
If $a \geq 0$, then
\[
  W_{1, 1} \otimes W_{a, a} = W_{a-1, a-1} \oplus W_{a+1, a-1}\oplus W_{a+1, a+1}.
\]
\end{lem}
The proofs of Lemmas \ref{lem:rep-thy-1,0} and \ref{lem:rep-thy-1,1} apply combinatorial theorems to decompose tensor products of irreducible $\Sp(4, \Q_\ell)$-representations into a direct sum of irreducible $\Sp(4, \Q_\ell)$-representations. We summarize the necessary combinatorial results and prove Lemmas \ref{lem:rep-thy-1,0} and \ref{lem:rep-thy-1,1} in Appendix \ref{sec:tensors}; the rest of this paper is independent of the content of Appendix \ref{sec:tensors}.

We are now able to give a recursive formula (in $n$) for the multiplicity of a given $W_{a, b}$ in $H^k(A^n;\Q_\ell)$ as $\Sp(4, \Q_\ell)$-representations. We do so in pieces after establishing some notation.
\begin{defn}
For any $(a, b) \in \N^2$ and any $\Sp(4, \Q_\ell)$-representation $V$, let
\[
  m_{a, b}(V) = \langle W_{a, b}, V\rangle.
\]
Recall that if $a < b$ or $b < 0$, then $W_{a,b} = 0$ so $m_{a, b}(V) = 0$ for all $V$.
\end{defn} 
\begin{rmk}
Lemma \ref{lem:X2-local-systems} determines $m_{a, b}(H^q(A))$ for all $a \geq b \geq 0$ and all $q \geq 0$. 
\end{rmk}
\begin{defn}\label{defn:indicator}
For any set $S$, denote the indicator function by $\ind_S(a, b)$ with
\[
  \ind_S(a, b) = \begin{cases}
  1 & (a, b)\in S \\
  0 & (a, b)\notin S.
  \end{cases}
\]
\end{defn}

The following two lemmas establish formulas that are necessary to give a recursive formula for $m_{a, b}(H^k(A^n))$ in $n$. The proofs are completely straight-forward but are included for completeness. 
\begin{lem}\label{lem:rec-13}
For any $a \geq b \geq 0$, $k \geq 0$, and $\Sp(4, \Q_\ell)$-representation $V$,
\begin{align*}
  \langle W_{a, b}, W_{1, 0} \otimes V \rangle = m_{a+1, b}(V) + m_{a, b+1}(V) + m_{a-1, b}(V) + m_{a, b-1}(V).
\end{align*}
\end{lem}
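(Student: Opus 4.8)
The plan is to reduce the identity to Lemma~\ref{lem:rep-thy-1,0} by decomposing $V$ into irreducibles and tensoring one summand at a time. Since $V$ is a restriction of an $\Sp(4,\C)$-representation, it is completely reducible, so write $V \cong \bigoplus_{c \geq d \geq 0} m_{c,d}(V)\, V_{c,d}$. Because $V_{1,0} \otimes (-)$ is additive,
\[
V_{1,0} \otimes V \cong \bigoplus_{c \geq d \geq 0} m_{c,d}(V)\,\bigl(V_{1,0} \otimes V_{c,d}\bigr),
\]
and Lemma~\ref{lem:rep-thy-1,0} rewrites each factor as $V_{c-1,d} \oplus V_{c+1,d} \oplus V_{c,d-1} \oplus V_{c,d+1}$, where any symbol $V_{x,y}$ with $(x,y)$ outside the dominant cone $x \geq y \geq 0$ is read as $0$.

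Then I would compute $\langle V_{a,b}, V_{1,0} \otimes V \rangle$ for a fixed dominant pair $(a,b)$ by pairing $V_{a,b}$ against the displayed direct sum. The block indexed by $(c,d)$ contributes a copy of $V_{a,b}$ exactly when $(a,b)$ is one of $(c-1,d)$, $(c+1,d)$, $(c,d-1)$, $(c,d+1)$; solving for $(c,d)$ in each case gives the four values $(a+1,b)$, $(a-1,b)$, $(a,b+1)$, $(a,b-1)$. These four pairs are pairwise distinct, and within any single block no irreducible occurs more than once, so the total multiplicity of $V_{a,b}$ is simply the sum of $m_{c,d}(V)$ over these four pairs, i.e. $m_{a+1,b}(V) + m_{a-1,b}(V) + m_{a,b+1}(V) + m_{a,b-1}(V)$, which is the asserted formula.

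The argument is elementary bookkeeping, so there is no real obstacle; the only point requiring a little care is the boundary behaviour when $a = b$ or $b = 0$, in which case some of the shifted indices fall outside the dominant cone. This is exactly what the convention $m_{a,b}(V) = 0$ unless $a \geq b \geq 0$ in the Definition (together with the matching convention already used in Lemma~\ref{lem:rep-thy-1,0}) is designed to absorb: a summand that disappears from $V_{1,0} \otimes V_{c,d}$ corresponds to an $m$-term that is declared zero on the other side, so the identity holds verbatim without case distinctions. The hypothesis $k \geq 0$ is vestigial and plays no role.
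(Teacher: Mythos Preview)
Your proposal is correct and follows essentially the same approach as the paper: decompose $V$ into irreducibles, apply Lemma~\ref{lem:rep-thy-1,0} to each summand, and identify which $(c,d)$ contribute a copy of $V_{a,b}$. Your handling of the boundary cases via the convention $m_{a,b}(V)=0$ for non-dominant $(a,b)$ is somewhat more explicit than the paper's, but the arguments are otherwise identical.
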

\begin{proof}
By Section \ref{sec:local-systems}, $V$ is a direct sum of irreducible $\Sp(4, \Q_\ell)$-representations, which are all of the form $W_{A,B}$ for some $A \geq B \geq 0$. Let $I(a, b) = \{(a+1, b), (a, b+1), (a-1, b), (a, b-1)\}$. By Lemma \ref{lem:rep-thy-1,0},
\begin{align*}
  \langle W_{a, b}, W_{1, 0} \otimes V \rangle &= \left\langle W_{a, b}, W_{1, 0} \otimes \left(\bigoplus_{(A, B)\in I(a, b)}m_{A, B}(V) W_{A, B} \right)\right\rangle \\
  &= \sum_{(A, B) \in I(a, b)} m_{A, B}(V) \langle W_{a, b}, W_{1, 0} \otimes W_{A, B} \rangle \\
  &= \sum_{(A, B) \in I(a, b)}m_{A, B}(V) \langle W_{a, b}, W_{A+1, B} \oplus W_{A, B+1} \oplus W_{A-1, B} \oplus W_{A, B-1} \rangle \\
  &= m_{a+1, b}(V) + m_{a, b+1}(V) + m_{a-1, b}(V) + m_{a, b-1}(V). \qedhere
\end{align*}
\end{proof}
\begin{lem}\label{lem:rec-2}
For any $a \geq b \geq 0$ and $\Sp(4, \Q_\ell)$-representation $V$,
\begin{align*}
\langle & W_{a, b}, (W_{0,0} \oplus W_{1, 1})\otimes V \rangle \\
&= (2- \ind_{\{a = b\}}(a, b))m_{a, b}(V) + m_{a-1, b-1}(V)+ m_{a-1, b+1}(V) + m_{a+1, b-1}(V) + m_{a+1, b+1}(V). 
\end{align*}
\end{lem}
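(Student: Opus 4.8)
The plan is to reduce the statement to Lemma~\ref{lem:rep-thy-1,1} exactly as Lemma~\ref{lem:rec-13} reduced to Lemma~\ref{lem:rep-thy-1,0}. First I would note that for the trivial summand, $\langle V_{a,b}, \Q_\ell \otimes V\rangle = \langle V_{a,b}, V\rangle = m_{a,b}(V)$, which contributes one copy of $m_{a,b}(V)$. The substantive part is computing $\langle V_{a,b}, V_{1,1}\otimes V\rangle$. Writing $V = \bigoplus_{(A,B)} m_{A,B}(V)\,V_{A,B}$ and using bilinearity of the tensor product together with the fact that $\langle V_{a,b}, V_{1,1}\otimes V_{A,B}\rangle$ is nonzero (and equal to $1$) precisely when $V_{a,b}$ appears in the decomposition of $V_{1,1}\otimes V_{A,B}$ given by Lemma~\ref{lem:rep-thy-1,1}, I would get that $\langle V_{a,b}, V_{1,1}\otimes V\rangle$ equals the sum of $m_{A,B}(V)$ over those $(A,B)$ for which $V_{a,b} \subseteq V_{1,1}\otimes V_{A,B}$.

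By Lemma~\ref{lem:rep-thy-1,1}, for $(A,B) \neq (0,0),(1,1)$ the representation $V_{1,1}\otimes V_{A,B}$ contains $V_{a,b}$ iff $(A,B)$ is one of $(a-1,b-1)$, $(a-1,b+1)$, $(a,b)$, $(a+1,b-1)$, $(a+1,b+1)$; and the relation ``$V_{a,b}\subseteq V_{1,1}\otimes V_{A,B}$'' is symmetric in the two index pairs in the generic range, so the contributing $(A,B)$ are exactly the five pairs listed in the statement (with the convention $m_{a,b}(V)=0$ unless $a\ge b\ge 0$, which automatically discards out-of-range terms). This gives $\langle V_{a,b}, V_{1,1}\otimes V\rangle = m_{a,b}(V) + m_{a-1,b-1}(V) + m_{a-1,b+1}(V) + m_{a+1,b-1}(V) + m_{a+1,b+1}(V)$ in the generic case, and adding the one copy from the trivial summand yields the coefficient $2$ on $m_{a,b}(V)$, as claimed.

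The one point needing care is the boundary cases $(a,b) = (0,0)$ and $(a,b) = (1,1)$, where the second formula of Lemma~\ref{lem:rep-thy-1,1} applies and the multiplicity of $V_{a,b}$ in $V_{1,1}\otimes V_{A,B}$ behaves differently. For $(a,b)=(1,1)$: $V_{1,1}$ occurs in $V_{1,1}\otimes V_{A,B}$ when $(A,B)=(0,0)$ (giving $V_{1,1}$) and when $(A,B)=(1,1)$ (the decomposition $V_{0,0}\oplus V_{2,0}\oplus V_{2,2}$ does \emph{not} contain $V_{1,1}$), so among the five generic candidates the self-pairing $(A,B)=(1,1)$ must be excluded — hence the term $m_{1,1}(V)$ in $\langle V_{1,1}, V_{1,1}\otimes V\rangle$ is dropped, and combined with the trivial summand the coefficient of $m_{1,1}(V)$ becomes $2-1 = 1$. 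For $(a,b)=(0,0)$: $V_{0,0}$ occurs in $V_{1,1}\otimes V_{A,B}$ only for $(A,B)=(1,1)$ (from $V_{1,1}\otimes V_{1,1} = V_{0,0}\oplus V_{2,0}\oplus V_{2,2}$) and not for $(A,B)=(0,0)$, so again one of the generic ``diagonal'' contributions is removed and the coefficient of $m_{0,0}(V)$ drops from $2$ to $1$. Both exceptions are precisely captured by the factor $2 - \ind_{\{(1,1),(0,0)\}}(a,b)$, which is the main (and only) subtlety; everything else is a direct bookkeeping of Lemma~\ref{lem:rep-thy-1,1}. I would present the generic computation in a displayed chain of equalities mirroring the proof of Lemma~\ref{lem:rec-13}, then dispose of the two special pairs by the explicit decompositions.
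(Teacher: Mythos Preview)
Your approach is correct and matches the paper's: split off the trivial summand, expand $V$ into irreducibles, and use Lemma~\ref{lem:rep-thy-1,1} to identify which $(A,B)$ contribute. One point you gloss over: the paper also separately treats $(a,b)=(2,0)$ and $(2,2)$, because in those cases $(1,1)$ lies among the five candidate pairs and the \emph{special} formula $V_{1,1}\otimes V_{1,1}=V_{0,0}\oplus V_{2,0}\oplus V_{2,2}$ applies rather than the generic one. The check is trivial (both $V_{2,0}$ and $V_{2,2}$ appear with multiplicity~$1$, so no correction term arises), but your argument as written only restricts to $(A,B)\neq(0,0),(1,1)$ and so does not literally cover these. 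Your ``symmetry'' remark can be sharpened to make these cases automatic: since every irreducible $\Sp(4)$-representation is self-dual, $\langle V_{a,b}, V_{1,1}\otimes V_{A,B}\rangle=\langle V_{A,B}, V_{1,1}\otimes V_{a,b}\rangle$ for \emph{all} pairs, so for $(a,b)\neq(0,0),(1,1)$ you may apply the generic decomposition directly to $V_{1,1}\otimes V_{a,b}$ and read off the contributing $(A,B)$ without worrying about whether any of them is special --- this is slightly cleaner than the paper's explicit case split.
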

\begin{proof}
By Section \ref{sec:local-systems}, $V$ is a direct sum of irreducible $\Sp(4, \Q_\ell)$-representations, which are all of the form $W_{A,B}$ for some $A \geq B \geq 0$. For all $a \geq b \geq 0$, $\langle W_{a, b}, W_{0,0} \otimes V \rangle = m_{a,b}(V)$. Now let $J(a, b) = \{(a-1, b-1), (a-1, b+1), (a, b), (a+1, b-1), (a+1, b+1)\}$. By Lemma \ref{lem:rep-thy-1,1},
\begin{align*}
  \langle W_{a, b}, W_{1,1} \otimes V \rangle &= \left\langle W_{a, b}, W_{1,1} \otimes \left(\bigoplus_{(A, B) \in J(a, b)}m_{A, B}(V) W_{A, B}\right) \right\rangle \\
  &= \sum_{(A, B) \in J(a, b)} m_{A, B}(V) \langle W_{a, b}, W_{1,1} \otimes W_{A, B} \rangle.
\end{align*}
For all $(A, B) \in J(a,b)$ with $A > B$, Lemma \ref{lem:rep-thy-1,1} says
\[
  m_{A, B}(V) \langle W_{a, b}, W_{1,1} \otimes W_{A, B} \rangle = m_{A, B}(V) \left\langle W_{a, b}, \bigoplus_{(C, D) \in J(A, B)}W_{C, D} \right\rangle = m_{A, B}(V).
\]
If $A > B$ for all $(A, B) \in J(a,b)$ then $a \neq b$ and 
\begin{align*}
\langle W_{a,b}, (\Q_\ell \oplus W_{1,1})\otimes V \rangle &= \langle W_{a,b}, \Q_\ell \otimes V \rangle + \langle W_{a,b}, W_{1,1} \otimes V \rangle \\
&= m_{a,b}(V) + \sum_{(A,B) \in J(a,b)} m_{A,B}(V) \\
&= 2m_{a,b}(V) + m_{a-1, b-1}(V)+ m_{a-1, b+1}(V) + m_{a+1, b-1}(V) + m_{a+1, b+1}(V),
\end{align*}
which proves the lemma in this case.

If $(A, A) \in J(a,b)$ with $A \geq 0$, then one of the following cases occur:
\begin{enumerate}
  \item $A+1 = a = b$ and 
  \begin{align*}
    m_{A,A}(V) \langle W_{a,b}, W_{1,1} \otimes W_{A,A} \rangle &= m_{A,A}(V) \langle W_{A+1, A+1}, W_{A-1, A-1} \oplus W_{A+1, A-1} \oplus W_{A+1, A+1}\rangle\\
    &= m_{a-1, a-1}(V).
  \end{align*}
  \item $A+1 = a = b+2$ and
  \begin{align*}
    m_{A,A}(V) \langle W_{a,b}, W_{1,1} \otimes W_{A,A} \rangle &= m_{A,A}(V) \langle W_{A+1,A-1}, W_{A-1, A-1} \oplus W_{A+1, A-1} \oplus W_{A+1, A+1}\rangle\\
    &= m_{a-1, b+1}(V).
  \end{align*}
  \item $A = a = b$ and
  \begin{align*}
    m_{A,A}(V) \langle W_{a,b}, W_{1,1} \otimes W_{A,A} \rangle &= m_{A,A}(V) \langle W_{A,A}, W_{A-1, A-1} \oplus W_{A+1, A-1} \oplus W_{A+1, A+1}\rangle \\
    &= 0.
  \end{align*}
  \item $A-1 = a = b-2$, but this implies that $a < b$. 
  \item $A-1 = a = b$ and
  \begin{align*}
    m_{A,A}(V) \langle W_{a,b}, W_{1,1} \otimes W_{A,A} \rangle &= m_{A,A}(V) \langle W_{A-1,A-1}, W_{A-1, A-1} \oplus W_{A+1, A-1} \oplus W_{A+1, A+1}\rangle \\
    &= m_{a+1, a+1}(V).
  \end{align*}
\end{enumerate}
Rearranging, these cases reduce to one of the following:
\begin{enumerate}
  \item $a = b$ and $(a-1, a-1), (a, a), (a+1, a+1) \in J(a,a)$, so
  \begin{align*}
  \langle W_{a,a}, (W_{0,0} \oplus W_{1,1}) \otimes V \rangle &= \langle W_{a,a}, W_{0,0} \otimes V \rangle + \langle W_{a,a}, W_{1,1} \otimes V \rangle \\
  &= m_{a,a}(V) + \sum_{(A,B) \in J(a,a)} m_{A,B}(V) \langle W_{a,a}, W_{1,1} \otimes W_{A,B} \rangle \\
  &= m_{a,a}(V) + \left( \sum_{(A, A) \in J(a,a)} m_{A,A}(V) \langle W_{a,a}, W_{1,1} \otimes W_{A,A} \rangle \right) \\
  &\quad + m_{a+1, a-1}(V) \langle W_{a,a}, W_{1,1} \otimes W_{a+1, a-1} \rangle \\
  &= m_{a,a}(V) + m_{a-1,a-1}(V) + m_{a+1,a+1}(V) + m_{a+1,a-1}(V)
  \end{align*}
  which proves the lemma in this case.
  \item $a = b+2$ and $(a-1, a-1) \in J(a, b)$, so
  \begin{align*}
  \langle W_{a,b}, (W_{0,0} \oplus W_{1,1}) \otimes V \rangle &= \langle W_{a,b}, W_{0,0} \otimes V \rangle + \langle W_{a,b}, W_{1,1} \otimes V \rangle \\
  &= m_{a,b}(V) + \sum_{(A,B) \in J(a,b)} m_{A,B}(V) \langle W_{a,b}, W_{1,1} \otimes W_{A,B} \rangle \\
  &= m_{a,b}(V) + m_{a-1,b+1}(V) \langle W_{a,b}, W_{1,1} \otimes W_{a-1,a-1} \rangle + \sum_{\substack{(A,B) \in J(a,b) \\ (A,B) \neq (a-1,a-1)}} m_{A,B}(V)\\
  &= m_{a,b}(V) +\sum_{(A,B) \in J(a,b)} m_{A,B}(V)
  \end{align*}
  which proves the lemma in this case. \qedhere
\end{enumerate}
\end{proof}

Combining all of the lemmas of this subsection shows that we have determined a recursive formula for all parts of the first identity of the following proposition.
\begin{prop}\label{prop:recursive_mult}
Let $a \geq b \geq 0$, $n \geq 1$, and $k \geq 0$. Viewing $H^*(A^N)$ as $\Sp(4, \Q_\ell)$-representations,
\begin{equation}\label{eqn:ab-for-Hk}
\begin{gathered}
  m_{a, b}(H^k(A^n)) = m_{a, b}(H^k(A^{n-1})) + \langle W_{a, b}, W_{1, 0}\otimes H^{k-1}(A^{n-1}) \rangle + \langle W_{a, b}, (W_{0,0} \oplus W_{1, 1})\otimes H^{k-2}(A^{n-1}) \rangle \\
  + \langle W_{a, b}, W_{1, 0}\otimes H^{k-3}(A^{n-1}) \rangle + m_{a, b}(H^{k-4}(A^{n-1}))
\end{gathered}
\end{equation}
and
\begin{equation}\label{eqn:00-for-Hk}
\begin{gathered}
  m_{0,0}(H^k(A^n)) = m_{0,0}(H^k(A^{n-1})) + m_{1,0}(H^{k-1}(A^{n-1})) + m_{0,0}(H^{k-2}(A^{n-1}))\\ + m_{1,1}(H^{k-2}(A^{n-1})) + m_{1,0}(H^{k-3}(A^{n-1})) + m_{0,0}(H^{k-4}(A^{n-1})).
\end{gathered}
\end{equation}
\end{prop}
\begin{proof}
Apply $m_{a,b}$ to the isomorphism given by Lemma \ref{lem:X2-n-local-systems} to obtain Equation (\ref{eqn:ab-for-Hk}). 

For $j = 1, 3$, apply Lemma \ref{lem:rec-13} to compute
\[
  \langle W_{0,0}, W_{1,0} \otimes H^{k-j}(A^{n-1}) \rangle = m_{1, 0}(H^{k-j}(A^{n-1}))
\]
and apply Lemma \ref{lem:rec-2} to compute
\[
  \langle W_{0,0}, (W_{0,0} \oplus W_{1,1}) \otimes H^{k-2}(A^{n-1}) \rangle = m_{0,0}(H^{k-2}(A^{n-1})) + m_{1,1}(H^{k-2}(A^{n-1})).
\]
Plug in $a = b = 0$ to Equation (\ref{eqn:ab-for-Hk}) with these calculations to obtain Equation (\ref{eqn:00-for-Hk}).
\end{proof}

We can also more explicitly describe the representations $W_{a, b}$ that occur in $H^k(A^n)$. These descriptions are necessary to prove Theorem \ref{thm:low-degree-computation}. 
\begin{prop}\label{prop:mult-even-odd}
If $m_{a, b}(H^k(A^n)) \neq 0$, then $a + b \equiv k \pmod 2$ and $a + b \leq k$. For all $n\geq k$, all such $a \geq b \geq 0$ and $k \geq 0$ give $m_{a, b}(H^k(A^n)) \neq 0$. 
\end{prop}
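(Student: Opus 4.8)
\textbf{Proof proposal for Proposition~\ref{prop:mult-even-odd}.}

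The plan is to prove both assertions by induction on $n$, using the recursive formulas of Proposition~\ref{prop:recursive_mult} together with the explicit tensor-product rules of Lemmas~\ref{lem:rep-thy-1,0} and~\ref{lem:rep-thy-1,1}. For the base case $n = 1$, Lemma~\ref{lem:X2-local-systems} (or Lemma~\ref{lem:X2-n-local-systems}) lists $H^k(A)$ as a sum of $\mathbf{V}_{a,b}$'s, and one checks directly that in each degree $k \in \{0,1,2,3,4\}$ the summands $V_{a,b}$ that appear satisfy $a+b \equiv k \pmod 2$ and $a + b \le k$, and moreover that \emph{every} such $(a,b)$ with $a \ge b \ge 0$ actually occurs: for $k = 0$ only $V_{0,0}$; for $k=1$ only $V_{1,0}$; for $k = 2$ both $V_{0,0}$ and $V_{1,1}$; for $k = 3$ only $V_{1,0}$; for $k=4$ only $V_{0,0}$. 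Note that for $k \ge 5$ the local system $H^k(A)$ vanishes, but $n \ge k$ fails there, so the "surjectivity" statement is vacuous in those degrees for $n=1$.

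For the inductive step of the first assertion (the parity and upper bound), suppose every nonzero multiplicity $m_{a,b}(H^j(A^{n-1}))$ forces $a + b \equiv j \pmod 2$ and $a+b \le j$. Each of the five terms in the recursion for $m_{a,b}(H^k(A^n))$ is governed by Lemmas~\ref{lem:rec-13} and~\ref{lem:rec-2}: the first and last terms contribute only $m_{a,b}(H^{k}(A^{n-1}))$ and $m_{a,b}(H^{k-4}(A^{n-1}))$, whose nonvanishing forces $a+b \equiv k$ and $a + b \le k$; the $V_{1,0}$-tensor terms (in degrees $k-1$, $k-3$) contribute $m_{a\pm1,b}$ and $m_{a,b\pm1}$ of $H^{k-1}$ resp. $H^{k-3}$, so nonvanishing forces $(a+b)\pm 1 \equiv k - 1$, i.e. $a+b \equiv k$, and $(a+b) - 1 \le k - 1$, i.e. $a + b \le k$; the $(\mathbf{1}\oplus V_{1,1})$-tensor term (degree $k-2$) contributes $m_{a,b}$ and $m_{a\pm1,b\pm1}$ of $H^{k-2}$, forcing $a+b \equiv k$ and $a + b \le k$ again (the worst case $a+b \le (k-2)+2$). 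So in all five cases the constraints $a+b\equiv k \pmod 2$ and $a+b \le k$ are preserved, completing the induction.

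For the second assertion — that for $n \ge k$ \emph{all} admissible $(a,b)$ occur — I would again induct on $n$, but the key point is to show that the recursion is "monotone" enough that no cancellation can occur: every term in the formula of Proposition~\ref{prop:recursive_mult} is a sum of nonnegative multiplicities (the coefficient $2 - \mathbf{1}_{\{(1,1),(0,0)\}}$ in Lemma~\ref{lem:rec-2} is always $\ge 1$), so $m_{a,b}(H^k(A^n)) \ge m_{a,b}(H^k(A^{n-1}))$, and more usefully $m_{a,b}(H^k(A^n))$ is bounded below by the contribution of any single term. Given admissible $(a,b,k)$ with $n \ge k$, I want to exhibit one term of the recursion that is already nonzero by the inductive hypothesis applied to $A^{n-1}$ (where $n - 1 \ge k - 1$). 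The natural move: if $a + b = k$, pick a term lowering both the cohomological degree and the "size" $a+b$ by the same amount — e.g. if $a \ge 1$ use the $V_{1,0}\otimes H^{k-1}$ term via the summand $m_{a-1,b}(H^{k-1}(A^{n-1}))$, and $(a-1,b)$ is admissible for degree $k-1$ with $n-1 \ge k-1$; if instead $a = 0$ (so $a=b=0$, $k$ even) use the diagonal term $m_{0,0}(H^{k-2}(A^{n-1}))$ through the recursion's second identity. If $a + b < k$ (so $a+b \le k-2$), use the $\mathbf{1}\otimes H^{k-2}$ part of the $(\mathbf{1}\oplus V_{1,1})$-term, i.e. the summand $m_{a,b}(H^{k-2}(A^{n-1}))$, which is admissible for degree $k-2$ with $n - 1 \ge k - 1 \ge k-2$. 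In every case one lands on an admissible triple for $A^{n-1}$ with the slack $n' \ge k'$ still satisfied, so induction closes.

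\textbf{Main obstacle.} The delicate part is the downward induction in the second assertion: I must make sure that the term of the recursion I select to witness nonvanishing corresponds to a triple $(a',b',k')$ that is genuinely admissible — in particular $a' \ge b' \ge 0$ — and that the inequality $n' \ge k'$ is preserved. The boundary/corner cases $b = 0$, $a = b$, and the small values $(a,b) \in \{(0,0),(1,1),(2,0),(2,2)\}$ where Lemma~\ref{lem:rep-thy-1,1} behaves irregularly need to be handled separately, and one must check that the special coefficient $2 - \mathbf{1}_{\{(1,1),(0,0)\}}$ never drops a needed contribution to zero. Organizing this casework cleanly — perhaps by always reducing via the $V_{1,0}$-term when $b \ge 1$ or $a > b$, and via the scalar $\mathbf{1}$-term otherwise — is where the real work lies; the parity/upper-bound half is essentially bookkeeping once the tensor rules are in hand.
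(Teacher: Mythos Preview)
Your proposal is correct and follows essentially the same route as the paper: both halves are proved by induction on $n$ via Proposition~\ref{prop:recursive_mult} and Lemmas~\ref{lem:rec-13}, \ref{lem:rec-2}, with the same base case $n=1$ from Lemma~\ref{lem:X2-local-systems}. Your ``main obstacle'' paragraph in fact lands exactly on the paper's case split for the nonvanishing direction---reduce via $m_{a,b-1}(H^{k-1}(A^{n-1}))$ when $b\ge 1$, via $m_{a-1,0}(H^{k-1}(A^{n-1}))$ when $a>b=0$, and via $m_{0,0}(H^{k-2}(A^{n-1}))$ when $(a,b)=(0,0)$---so the only repair needed to your main text is to use $m_{a,b-1}$ rather than $m_{a-1,b}$ in the diagonal case $a=b\ge 1$, and to treat $k=0$ separately (trivially, since $H^0(A^n)=\Q_\ell$).
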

\begin{proof}
To prove that if $m_{a, b}(H^k(A^n)) \neq 0$ then $a + b \equiv k \pmod 2$, we proceed by induction on $n$. For $n = 1$, the claim is true by Lemma \ref{lem:X2-local-systems}. Now assume the claim for $n-1$. Suppose $a + b \equiv k+1 \pmod 2$ or $a + b > k$ and let $V(k) = H^k(A^{n-1})$. Then using Lemma \ref{lem:rec-13} with $j = 1$ or $3$,
\begin{align*}
\langle W_{a, b}, W_{1, 0}\otimes V(k-j)) \rangle &= m_{a+1, b}(V(k-j)) + m_{a, b+1}(V(k-j)) \\
&\quad+ m_{a-1, b}(V(k-j)) + m_{a, b-1}(V(k-j)).
\end{align*}
Here, each summand is of the form $m_{\alpha, \beta}(V(k-j))$ for some $(\alpha, \beta) \in I(a,b)$ using the notation of the proof of Lemma \ref{lem:rec-13}. All such tuples satisfy $\alpha + \beta \equiv a+b+1 \pmod 2$, and so $\alpha + \beta \equiv k \pmod 2$. Since $k-j \equiv k+1 \pmod 2$, the inductive hypothesis shows that each $m_{\alpha, \beta}(V(k-j)) = 0$. Therefore, $\langle W_{a, b}, W_{1, 0}\otimes V(k-j)) \rangle = 0$.

By Lemma \ref{lem:rec-2} ,
\begin{align*}
\langle W_{a, b}, (W_{0,0} \oplus W_{1,1}) \otimes V(k-2)\rangle &= (2 - \ind_{\{a = b\}}(a, b)) m_{a,b}(V(k-2)) + m_{a-1, b-1}(V(k-2)) \\
&\quad + m_{a-1, b+1}(V(k-2)) + m_{a+1, b-1}(V(k-2)) + m_{a+1, b+1}(V(k-2)).
\end{align*}
Each summand is a multiple of $m_{\alpha, \beta}(V(k-2))$ for some $(\alpha, \beta) \in J(a,b)$ using the notation of the proof of Lemma \ref{lem:rec-2}. All such tuples satisfy $\alpha + \beta \equiv a+b \pmod 2$, and so $\alpha + \beta \equiv k+1 \pmod 2$. Since $\alpha + \beta \not\equiv k-2 \pmod 2$, the inductive hypothesis shows that each $m_{\alpha,\beta}(V(k-2)) = 0$. Therefore, $\langle W_{a, b}, (W_{0,0} \oplus W_{1,1}) \otimes V(k-2)\rangle = 0$.

By Proposition \ref{prop:recursive_mult}(\ref{eqn:ab-for-Hk}),
\begin{align*}
m_{a,b}(H^k(A^n)) &= m_{a, b}(V(k)) + \langle W_{a, b}, W_{1, 0}\otimes V(k-1)) \rangle + \langle W_{a, b}, (W_{0,0} \oplus W_{1, 1})\otimes V(k-2) \rangle \\
  &\quad+ \langle W_{a, b}, W_{1, 0}\otimes V(k-3) \rangle + m_{a, b}(V(k-4)) \\
  &= m_{a, b}(V(k)) + m_{a, b}(V(k-4)) = 0.
\end{align*}

For fixed $k \geq 0$ and $a \geq b \geq 0$ with $a+b \equiv k \pmod 2$ and $a+b \leq k$, we next show that $m_{a,b}(H^k(A^n)) \neq 0$ for $n \geq k$ by induction on $n$. For $n = 1$, the claim is again true by Lemma \ref{lem:X2-local-systems}. Assume the claim holds for $n-1$ and take any $k \leq n$. If $b \geq 1$, then by Proposition \ref{prop:recursive_mult}(\ref{eqn:ab-for-Hk}), Lemma \ref{lem:rec-13}, and the inductive hypothesis,
\[
  m_{a,b}(H^k(A^n)) \geq \langle W_{a,b}, W_{1,0} \otimes H^{k-1}(A^{n-1}) \rangle \geq m_{a,b-1}(H^{k-1}(A^{n-1})) > 0.
\]
If $a \neq 0$ and $b = 0$, then
\[
  m_{a,b}(H^k(A^n)) \geq \langle W_{a,b}, W_{1,0} \otimes H^{k-1}(A^{n-1}) \rangle \geq m_{a-1,b}(H^{k-1}(A^{n-1})) > 0.
\]
If $(a, b) = (0,0)$ and $k \geq 2$, then
\[
  m_{0,0}(H^k(A^n)) \geq \langle W_{0,0}, (W_{0,0} \oplus W_{1,1}) \otimes H^{k-2}(A^{n-1}) \rangle \geq (2 - \ind_{\{(1,1), (0,0)\}}(0,0)) m_{0,0}(H^{k-2}(A^{n-1})) > 0.
\]
If $k = 0$, then $H^0(A^n) = W_{0,0}$ for all $n$ and so $m_{0,0}(H^0(A^n)) = 1$.
\end{proof}

Throughout the rest of this section, let $\binom nm = 0$ if $n < m$. The following lemma contains the recursive calculations of $m_{a,b}(H^k(A^n))$ for select values of $a, b$, and $k$ which will be used in the proof of Theorem \ref{thm:low-degree-computation}, the main theorem of this section.
\begin{lem}\label{lem:small-cases-mult}
Let $a \geq b \geq 0$ and $M > 0$.
\begin{multicols}{2}
\begin{enumerate}[(a)]
  \item $m_{a,b}(H^0(A^M)) = \ind_{\{(0,0)\}}(a, b)$.\label{item:a,b,M,0}
  \item $m_{1,0}(H^1(A^M)) = M$.\label{item:1,0,M,1}
  \item $m_{0,0}(H^2(A^M)) = \binom{M+1}{2}$. \label{item:0,0,M,2}
  \item $m_{1,1}(H^2(A^M)) = \binom{M+1}{2}$. \label{item:1,1,M,2}
  \item $m_{2,0}(H^2(A^M)) = \binom M 2$. \label{item:2,0,M,2}
  \item $m_{1,0}(H^3(A^M)) = \binom M3 + 2 \binom{M+1}{3} + M^2$. \label{item:1,0,M,3}
  \item $m_{0,0}(H^4(A^M)) = \frac{M(M+1)(M^2 + M + 2)}{8}$.\label{item:0,0,M,4}
\end{enumerate}
\end{multicols}
\end{lem}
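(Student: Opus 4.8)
The plan is to prove each of the seven identities in Lemma~\ref{lem:small-cases-mult} by induction on $M$, using the recursive formulas of Proposition~\ref{prop:recursive_mult} together with the base case $M = 1$ supplied by Lemma~\ref{lem:X2-local-systems} (which gives $m_{0,0}(H^0(A)) = 1$, $m_{1,0}(H^1(A)) = 1$, $m_{0,0}(H^2(A)) = 1$, $m_{1,1}(H^2(A)) = 1$, $m_{1,0}(H^3(A)) = 1$, $m_{0,0}(H^4(A)) = 1$, and all other relevant multiplicities zero, including $m_{2,0}(H^2(A)) = 0$). The key observation that makes this tractable is that each identity involves multiplicities in a \emph{fixed} low degree $k \le 4$, and Proposition~\ref{prop:recursive_mult} expresses $m_{a,b}(H^k(A^M))$ in terms of multiplicities of $H^{k'}(A^{M-1})$ with $k' \le k$; since $H^0$, $H^1$, $H^2$, $H^3$ are already pinned down by earlier items in the list (in the right order), the recursion for each item only refers to quantities established previously or to the same item at $M-1$. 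So the proof is a bookkeeping exercise: process items (1)--(7) in order, and within each, run the one-variable induction on $M$.

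Concretely, here is the order of operations I would follow. Item (1) is immediate since $H^0(A^M) = \Q_\ell$ for all $M$. For item (2), apply Proposition~\ref{prop:recursive_mult} to $m_{1,0}(H^1(A^M))$: the only surviving terms are $m_{1,0}(H^1(A^{M-1}))$ and $\langle V_{1,0}, V_{1,0} \otimes H^0(A^{M-1})\rangle = m_{0,0}(H^0(A^{M-1})) = 1$ (using Lemma~\ref{lem:rec-13} and item (1)), giving the recursion $m_{1,0}(H^1(A^M)) = m_{1,0}(H^1(A^{M-1})) + 1$, hence $M$. For items (3)--(5), apply Proposition~\ref{prop:recursive_mult} in degree $k = 2$: $m_{0,0}(H^2(A^M))$ picks up $m_{0,0}(H^2(A^{M-1})) + m_{1,0}(H^1(A^{M-1})) + m_{0,0}(H^0(A^{M-1})) = m_{0,0}(H^2(A^{M-1})) + (M-1) + 1$, which telescopes to $\binom{M+1}{2}$; similarly $m_{1,1}(H^2(A^M))$ and $m_{2,0}(H^2(A^M))$ are handled via Lemmas~\ref{lem:rec-13} and~\ref{lem:rec-2} applied to $H^1(A^{M-1}) \cong M\,\mathbf V_{1,0}$ (as $\Sp(4,\Z)$-reps) and $H^0(A^{M-1}) = \Q_\ell$, being careful with the indicator-function correction in Lemma~\ref{lem:rec-2} when $(a,b) = (1,1)$. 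Then item (6) in degree $k=3$ uses items (2), (3), (4), (5) as inputs, and item (7) in degree $k = 4$ uses items (1), (2), (3), (4), (6); in each case the recursion produces a sum that telescopes to the claimed closed form, which one checks by a routine computation with binomial coefficients (e.g.\ verifying $\sum_{j=1}^{M}\left(\binom{j}{3} + 2\binom{j+1}{3} + j^2\right)$-type identities, or more simply checking the claimed polynomial satisfies the first-difference recursion and the $M=1$ base case).

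The one genuinely delicate point is item (6): the recursion for $m_{1,0}(H^3(A^M))$ involves $\langle V_{1,0}, V_{1,0}\otimes H^2(A^{M-1})\rangle$, which by Lemma~\ref{lem:rec-13} equals $m_{2,0} + m_{0,0} + m_{1,1}$ of $H^2(A^{M-1})$ (the terms $m_{1,-1}$ and $m_{2,1}$ vanishing — $m_{2,1}$ because $a+b=3$ is odd in degree $2$, by Proposition~\ref{prop:mult-even-odd}, and $m_{1,-1}$ trivially), plus $\langle V_{1,0}, V_{1,0}\otimes H^0(A^{M-1})\rangle = 1$ from the $H^0$ term, plus $m_{1,0}(H^3(A^{M-1}))$. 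Substituting items (3), (4), (5) gives the first difference $m_{1,0}(H^3(A^M)) - m_{1,0}(H^3(A^{M-1})) = \binom{M-1}{2} + \binom{M}{2} + \binom{M}{2} + 1$, and then one must verify that $\binom{M}{3} + 2\binom{M+1}{3} + M^2$ has exactly this first difference and the correct value at $M=1$. I expect the main obstacle to be nothing conceptual but rather keeping the Tate-twist-free $\Sp(4,\Z)$-representation bookkeeping straight and making sure that in each application of Lemmas~\ref{lem:rec-13} and~\ref{lem:rec-2} the out-of-range multiplicities (negative entries, or entries of the wrong parity forbidden by Proposition~\ref{prop:mult-even-odd}) are correctly zeroed out, and handling the indicator corrections at the sporadic pairs $(0,0)$ and $(1,1)$; the binomial-coefficient verifications at the end are mechanical.
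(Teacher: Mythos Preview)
Your approach is exactly the paper's: induction on $M$ via Proposition~\ref{prop:recursive_mult}, with the base case $M=1$ read off from Lemma~\ref{lem:X2-local-systems}, processing items (1)--(7) in order so that each recursion only calls earlier items. The paper carries out precisely these seven inductions, in this order, using Lemmas~\ref{lem:rec-13} and~\ref{lem:rec-2} and the parity constraint from Proposition~\ref{prop:mult-even-odd} to kill the stray multiplicities.

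There is, however, a bookkeeping slip in your treatment of item~(6). Proposition~\ref{prop:recursive_mult} for $k=3$ has \emph{five} terms, and you have dropped the middle one, $\langle V_{1,0},\,(\Q_\ell\oplus V_{1,1})\otimes H^{1}(A^{M-1})\rangle$. By Lemma~\ref{lem:rec-2} with $(a,b)=(1,0)$ this equals $2\,m_{1,0}(H^1(A^{M-1})) + m_{2,1}(H^1(A^{M-1})) = 2(M-1)$, using item~(2) and Proposition~\ref{prop:mult-even-odd}. So the correct first difference is
\[
  m_{1,0}(H^3(A^M)) - m_{1,0}(H^3(A^{M-1})) \;=\; \binom{M-1}{2} + 2\binom{M}{2} + 2(M-1) + 1,
\]
not $\binom{M-1}{2} + 2\binom{M}{2} + 1$ as you wrote. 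Indeed, the claimed closed form $f(M)=\binom{M}{3}+2\binom{M+1}{3}+M^2$ satisfies $f(M)-f(M-1)=\binom{M-1}{2}+2\binom{M}{2}+2M-1$, which matches only once the missing $2(M-1)$ is restored. With that correction, your argument goes through and coincides with the paper's.
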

\begin{proof}
All proofs are by induction on $M$. We can check manually that all claims hold for $M = 1$. Assume that they hold for $M -1$.
\begin{enumerate}
  \item For all $M$, $H^0(A^M) = W_{0,0}$, the trivial $\Sp(4, \Q_\ell)$-representation.

  \item By Proposition \ref{prop:recursive_mult}(\ref{eqn:ab-for-Hk}), Lemma \ref{lem:rec-13} and (\ref{item:a,b,M,0}),
  \begin{align*}
    m_{1,0}&(H^1(A^M)) = m_{1,0}(H^1(A^{M-1})) + \langle W_{1,0}, W_{1,0} \otimes H^0(A^{M-1}) \rangle \\
    &= (M-1) + \left(m_{2,0}(H^0(A^{M-1})) + m_{1,1}(H^0(A^{M-1})) + m_{0,0}(H^0(A^{M-1})) \right) = M.
  \end{align*}

  \item By Proposition \ref{prop:recursive_mult}(\ref{eqn:00-for-Hk}), (\ref{item:a,b,M,0}), and (\ref{item:1,0,M,1}),
  \begin{align*}
  m_{0,0}(H^2(A^{M})) &= m_{0,0}(H^2(A^{M-1})) + m_{1,0}(H^1(A^{M-1})) + m_{0,0}(H^0(A^{M-1}))+ m_{1,1}(H^0(A^{M-1})) \\
  &= \binom{M}{2} + (M-1) + 1 = \binom{M+1}{2}.
  \end{align*}

  \item By Proposition \ref{prop:recursive_mult}(\ref{eqn:ab-for-Hk}), Lemma \ref{lem:rec-13}, Lemma \ref{lem:rec-2}, (\ref{item:a,b,M,0}), and (\ref{item:1,0,M,1}),
  \begin{align*}
  m_{1,1}(H^2(A^{M})) &= m_{1,1}(H^2(A^{M-1})) + \langle W_{1,1}, W_{1,0} \otimes (H^1(A^{M-1})) \rangle + \langle W_{1,1}, (\Q_\ell \oplus W_{1,1}) \otimes (H^0(A^{M-1})) \rangle \\
  &= \binom{M}{2} + \left(m_{2, 1}(H^1(A^{M-1})) + m_{1, 0}(H^1(A^{M-1}))\right) \\
  &\quad+ \left(m_{1,1}(H^0(A^{M-1})) + m_{0,0}(H^0(A^{M-1})) + m_{2, 0}(H^0(A^{M-1})) + m_{2,2}(H^0(A^{M-1}))\right) \\
  &= \binom{M}{2} + \left(m_{2, 1}(H^1(A^{M-1})) + (M-1)\right) + \left(1\right) = \binom{M+1}{2}
  \end{align*}
  where the last equality follows by Proposition \ref{prop:mult-even-odd}, which gives that $m_{2,1}(H^1(A^{M-1})) = 0$.

  \item By Proposition \ref{prop:recursive_mult}(\ref{eqn:ab-for-Hk}), Lemma \ref{lem:rec-13}, Lemma \ref{lem:rec-2}, (\ref{item:a,b,M,0}), and (\ref{item:1,0,M,1}),
  \begin{align*}
  m_{2,0}(H^2(A^{M})) &= m_{2,0}(H^2(A^{M-1})) + \langle W_{2,0}, W_{1,0} \otimes (H^1(A^{M-1})) \rangle + \langle W_{2,0},(\Q_\ell \oplus W_{1,1}) \otimes (H^0(A^{M-1})) \rangle \\
  &= \binom{M-1}{2} + \left(m_{3, 0}(H^1(A^{M-1})) + m_{2, 1}(H^1(A^{M-1})) + m_{1, 0}(H^1(A^{M-1}))\right) \\
  &\quad+ \left(2m_{2, 0}(H^0(A^{M-1})) + m_{1,1}(H^0(A^{M-1})) + m_{3, 1}(H^0(A^{M-1}))\right) \\
  &=\binom{M-1}{2} + (M-1) = \binom M 2.
  \end{align*}
  where the last equality follows by Proposition \ref{prop:mult-even-odd}, which gives that $m_{3,0}(H^1(A^{M-1})) = m_{2,1}(H^1(A^{M-1})) = 0$.

  \item By Proposition \ref{prop:recursive_mult}(\ref{eqn:ab-for-Hk}), Lemma \ref{lem:rec-13}, Lemma \ref{lem:rec-2}, and (\ref{item:a,b,M,0}) - (\ref{item:2,0,M,2}),
  \begin{align*}
  m_{1,0}(H^3(A^{M})) &= m_{1,0}(H^3(A^{M-1})) + \langle W_{1,0}, W_{1,0} \otimes H^2(A^{M-1}) \rangle \\
  &\quad+ \langle W_{1,0}, (\Q_\ell \oplus W_{1,1})\otimes H^1(A^{M-1}) \rangle + \langle W_{1,0}, W_{1,0} \otimes H^0(A^{M-1}) \rangle \\
  &= m_{1,0}(H^3(A^{M-1})) + \left(m_{2, 0}(H^2(A^{M-1})) + m_{1,1}(H^2(A^{M-1})) + m_{0,0}(H^2(A^{M-1}))\right) \\
  &\quad+ \left(2m_{1,0}(H^1(A^{M-1})) + m_{2,1}(H^1(A^{M-1}))\right) \\
  &\quad+ \left(m_{2, 0}(H^0(A^{M-1})) + m_{1,1}(H^0(A^{M-1})) + m_{0,0}(H^0(A^{M-1}))\right) \\
  &= \left(\binom{M-1}{3} + 2\binom M 3 + (M-1)^2 \right) + \left(\binom{M-1}{2} +\binom{M}{2} + \binom{M}{2}\right) \\
  &\quad + \left(2(M-1) + m_{2,1}(H^1(A^{M-1}))\right) + 1\\
  &= \binom M3 + 2 \binom{M+1}{3} + M^2
  \end{align*}
  where again, the last equality uses that $m_{2,1}(H^1(A^{M-1})) = 0$ by Proposition \ref{prop:mult-even-odd}.

  \item By Proposition \ref{prop:recursive_mult}(\ref{eqn:00-for-Hk}), (\ref{item:a,b,M,0}) - (\ref{item:1,1,M,2}), and (\ref{item:1,0,M,3}), 
  \begin{align*}
    m_{0,0}(H^4(A^{M})) &= m_{0,0}(H^4(A^{M-1})) + m_{1,0}(H^3(A^{M-1})) + m_{0,0}(H^2(A^{M-1})) + m_{1,1}(H^2(A^{M-1})) \\
    &\quad+ m_{1,0}(H^1(A^{M-1})) + m_{0,0}(H^0(A^{M-1})) \\
    &= \frac{(M-1)(M)(M^2 - M + 2)}{8} + \left(\binom{M-1}{3} + 2\binom M3 + (M-1)^2\right) \\
    &\quad + \binom{M}{2} + \binom{M}{2} + (M-1) + 1 \\
    &= \frac{M(M+1)(M^2 + M + 2)}{8}. \qedhere
  \end{align*}
\end{enumerate}
\end{proof}

We are now ready to compute $H^k(\mathcal X_2^n; \Q_\ell)$ for all $n \geq 1$ and $0 \leq k \leq 5$.
\begin{thm:low-degree-computation}
For all $n \geq 1$, 
\begin{align*}
H^k(\mathcal X_2^n; \Q_\ell) &= \begin{cases}
  \Q_\ell & k = 0 \\
  0 & k = 1, 3 \\
  \left(\binom{n+1}{2} + 1\right) \Q_\ell(-1) & k = 2 \\
  \left(\frac{n(n+1)(n^2+n+2)}{8} + \binom{n+1}{2}\right)\Q_\ell(-2) & k = 4\\
  \binom{n+1}{2} \Q_\ell(-5) \oplus \binom n2 \Q_\ell(-4) & k = 5
\end{cases}
\end{align*}
up to semi-simplification.
\end{thm:low-degree-computation}
\begin{proof}
Recall that the $(p, q)$-entry of the $E_2$-sheet of the Leray spectral sequence of $\pi^n: \mathcal X_2^n \to \mathcal A_2$ is $H^p(\mathcal A_2; H^q(A^n))$ by Proposition \ref{prop:leray}; denote this entry by $E_2^{p,q}(n)$. We compute many entries on the $E_2$-sheet and list the nonzero results in Figure \ref{fig:X2-n-spectral-sequence}, from which the theorem follows directly. The special case of $n = 1$ is given in Figure \ref{fig:X2-spectral-sequence}. All computations here are up to semi-simplification. 
\begin{enumerate}
  \item $E_2^{p,0}(n) = H^p(\mathcal A_2; \Q_\ell)$ for all $p$.
  \begin{proof}
  By definition and Lemmas \ref{lem:X2-n-local-systems} and \ref{lem:small-cases-mult},
  \[
    E_2^{p,0}(n) = H^p(\mathcal A_2; H^0(A^{n})) = \bigoplus_{a \geq b \geq 0} m_{a, b}(H^0(A^{n}))H^p\left(\mathcal A_2; \mathbf V_{a, b} \left(\frac{a+b}{2}\right)\right) = H^p(\mathcal A_2; \Q_\ell).\qedhere
  \]
  \end{proof}

  \item $E_2^{p, q}(n) = 0$ for all $p \geq 0$, $q \equiv 1 \pmod 2$.
  \begin{proof}
  Suppose $m_{a, b}(H^q(A^n)) \neq 0$. By Proposition \ref{prop:mult-even-odd}, $a+b \equiv 1 \pmod 2$. Therefore
  \[
    E_2^{p,q}(n) = H^p(\mathcal A_2; H^q(A^{n})) = \bigoplus_{\substack{a\geq b \geq 0 \\ a+b \equiv 1 \pmod 2}} m_{a, b}(H^q(A^{n})) H^p(\mathcal A_2; \mathbf V_{a, b})\left(\frac{a+b-q}{2}\right) = 0
  \]
  where the last equality follows since $H^p(\mathcal A_2; \mathbf V_{a, b}) = 0$. (See the remark before Theorem \ref{thm:petersen}.)
  \end{proof}

  \item $E_2^{1, q}(n) = 0$ for all $q \geq 0$.
  \begin{proof}
  By definition and Corollary \ref{cor:petersen}(\ref{eqn:cor-petersen-deg1}),
  \[
    E_2^{1,q}(n) = H^1(\mathcal A_2; H^q(A^n)) = \bigoplus_{a \geq b \geq 0} m_{a,b}(H^q(A^n)) H^1(\mathcal A_2; \mathbf V_{a,b}) = 0. \qedhere
  \]
  \end{proof}

  \item $E_2^{p, 2}(n) = \binom{n+1}{2} H^p(\mathcal A_2; \Q_\ell)(-1)$ and $E_2^{p, 4}(n) = \frac{n(n+1)(n^2+n+2)}{8} H^p(\mathcal A_2; \Q_\ell)(-2)$ for $p = 0, 1, 2$.
  \begin{proof}
  Let $q = 2$, $4$. By definition and Proposition \ref{prop:mult-even-odd},
  \[
    E_2^{p, q}(n) = H^p(\mathcal A_2; H^q(A^{n})) = \bigoplus_{\substack{a + b \leq q \\ a + b \equiv 0 \pmod 2}} m_{a,b}(H^q(A^{n}))H^p(\mathcal A_2; \mathbf V_{a, b})\left(\frac{a+b-q}{2}\right).
  \]
  By Theorem \ref{thm:petersen}, $H^p(\mathcal A_2; \mathbf V_{a, b}) = 0$ for $(a, b) \neq 0$ and $p = 0$, $1$, so
  \[
    E_{2}^{p, q}(n) = m_{0,0}(H^q(A^{n}))H^p(\mathcal A_2;\Q_\ell) \left(-\frac{q}{2}\right)
  \]
  for $p = 0$, $1$. The claim then follows by Lemma \ref{lem:small-cases-mult}. For $p = 2$, Theorem \ref{thm:petersen} gives that $H^2(\mathcal A_2; \mathbf V_{a, b}) \neq 0$ only if $a = b$ even, and so
  \[
    E_2^{2, q}(n) = m_{0,0}(H^q(A^{n}))H^2(\mathcal A_2; \Q_\ell)\left(-\frac q2\right) \oplus \begin{cases} 
    0 & q = 2\\
    m_{2,2}(H^4(A^{n}))H^2(\mathcal A_2; \mathbf V_{2,2}) & q = 4.
    \end{cases}
  \]
  In the case $q = 2$, the claim then follows from Lemma \ref{lem:small-cases-mult}. In the case $q = 4$, the claim follows from Corollary \ref{cor:petersen}(\ref{eqn:cor-petersen-2,2}) since $H^2(\mathcal A_2; \mathbf V_{2,2}) = 0$. 
  \end{proof}

  \item $E_2^{3, 2}(n) = \binom{n+1}{2} \Q_\ell(-5) \oplus \binom n 2 \Q_\ell(-4)$ and $E_2^{4, 2}(n) = 0$.
  \begin{proof}
  Let $p = 3$, $4$. By definition, Proposition \ref{prop:mult-even-odd}, Theorem \ref{thm:a2}, and Lemma \ref{lem:small-cases-mult},
  \begin{align*}
    E_2^{p, 2}(n) &= H^p(\mathcal A_2; H^2(A^{n})) = \bigoplus_{a \geq b \geq 0} m_{a, b}(H^2(A^{n})) H^p(\mathcal A_2; \mathbf V_{a, b})\left(\frac{a + b - 2}{2}\right)\\
    &= m_{0,0}(H^2(A^{n})) H^p(\mathcal A_2; \mathbf V_{0,0})(-1) \oplus m_{1,1}(H^2(A^{n})) H^p(\mathcal A_2; \mathbf V_{1,1}) \oplus m_{2,0}(H^2(A^{n})) H^p(\mathcal A_2; \mathbf V_{2,0})\\
    &= \binom{n+1}{2} H^p(\mathcal A_2; \mathbf V_{1,1}) \oplus \binom{n}{2} H^p(\mathcal A_2; \mathbf V_{2,0}).
  \end{align*}
  Then the claim follows from Corollary \ref{cor:petersen}(\ref{eqn:cor-petersen-1,1}) and (\ref{eqn:cor-petersen-2,0}).
  \end{proof}

  \item $E_2^{p, q}(n) = 0$ for all $p \geq 4$ and $q = 0, 1, 2, 3$.
  \begin{proof}
  By definition and Proposition \ref{prop:mult-even-odd},
  \[
    E_2^{p,q}(n) = H^p(\mathcal A_2; H^q(A^n)) = \bigoplus_{\substack{a+b \leq q \\ a + b \equiv q \pmod 2}} m_{a,b}(H^q(A^n)) H^p(\mathcal A_2; \mathbf V_{a,b}) \left(\frac{a+b-q}{2}\right).
  \]
  For $q = 0, 1, 2, 3$, the only tuples $(a, b)$ with $a \geq b \geq 0$ and $a+b \leq q$ satisfy $(a,b) \in \{(0,0), (1,0), (1,1), (2,0), (2,1)\}$. By the remark before Theorem \ref{thm:petersen}, $H^p(\mathcal A_2; \mathbf V_{a,b}) = 0$ for all $p \geq 0$ if $a+b \equiv 1 \pmod 2$. By Corollary \ref{cor:petersen}(\ref{eqn:cor-petersen-1,1}) and (\ref{eqn:cor-petersen-2,0}), $H^p(\mathcal A_2; \mathbf V_{a,b}) = 0$ for all $p \geq 4$ and $(a,b) = (1,1), (2,0)$. By Theorem \ref{thm:a2}, $H^p(\mathcal A_2; \mathbf V_{0,0}) = 0$ for all $p \geq 3$. Therefore, all summands of the direct sum above are zero.
  \end{proof}
\end{enumerate}
\end{proof}
\begin{rmk}
Theorem \ref{thm:low-degree-computation} is consistent with the stabilization result \cite[Theorem 6.1]{grushevsky--hulek--tommasi}, which says that the rational cohomology of $\mathcal X_g^n$ stabilizes in degrees $k < g = 2$. 
\end{rmk}
\begin{figure}
\centering
\includegraphics[width=\textwidth]{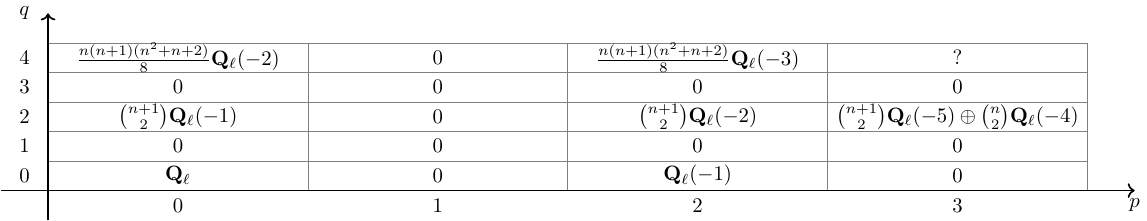}
\caption{Some low degree terms of the $E_2$-page of the Leray spectral sequence for $\pi^n: \mathcal X_2^n\to \mathcal A_2$ determined in Theorem \ref{thm:low-degree-computation}.}
\label{fig:X2-n-spectral-sequence}
\end{figure}

\subsection{Explicit computations for $n = 2$.}
Once one has computed $m_{a, b}(H^k(A^{n}))$ for fixed $n$ and for all $k \geq 0$, $a \geq b \geq 0$, one can in theory apply Proposition \ref{prop:leray} and Theorem \ref{thm:petersen} to determine $H^*(\mathcal X_2^n; \Q_\ell)$. In this subsection, we detail the results of this process for $n = 2$. 

\begin{lem}\label{lem:X2-2-local-systems}
The local systems $H^k(A^2; \Q_\ell)$ on $\mathcal A_2$ are
\[
  H^k(A^2; \Q_\ell) \cong \begin{cases}
  \mathbf V_{0,0} & k = 0 \\
  2\mathbf V_{1, 0} & k = 1 \\
  3 \mathbf V_{0,0}(-1) \oplus 3 \mathbf V_{1,1} \oplus \mathbf V_{2,0} & k = 2 \\
  6 \mathbf V_{1, 0}(-1) \oplus 2\mathbf V_{2, 1} & k = 3\\
  6 \mathbf V_{0,0}(-2) \oplus 4\mathbf V_{1,1}(-1) \oplus 3\mathbf V_{2, 0}(-1) \oplus \mathbf V_{2,2} & k = 4 \\
  6 \mathbf V_{1, 0}(-2) \oplus 2\mathbf V_{2, 1}(-1) & k = 5 \\
  3 \mathbf V_{0,0}(-3) \oplus 3 \mathbf V_{1,1}(-2) \oplus \mathbf V_{2,0}(-2) & k = 6 \\
  2\mathbf V_{1,0}(-3) & k = 7 \\
  \mathbf V_{0,0}(-4) & k = 8 \\
  0 & k > 8.
  \end{cases}
\]
\end{lem}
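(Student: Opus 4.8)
The plan is to expand $H^k(A^2;\Q_\ell)$ via the K\"unneth isomorphism of Lemma~\ref{lem:X2-n-local-systems} at $n=2$, $H^k(A^2)\cong\bigoplus_{i+j=k}H^i(A)\otimes H^j(A)$, substitute the explicit decompositions of the $H^i(A)$ recorded in Lemma~\ref{lem:X2-local-systems}, and break each tensor summand into irreducible $\Sp(4,\Z)$-representations. Equivalently one could run the recursion of Proposition~\ref{prop:recursive_mult} at $n=2$, whose input is precisely the list of $m_{a,b}(H^k(A))$ from Lemma~\ref{lem:X2-local-systems}; I prefer the K\"unneth form because it keeps the Galois twists visible. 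Since $\dim A^2=4$ the groups vanish for $k>8$, so only finitely many cases occur, and Proposition~\ref{prop:mult-even-odd} restricts the constituents of $H^k(A^2)$ to those $\mathbf V_{a,b}$ with $a+b\equiv k\pmod 2$ and $a+b\le k$, which keeps the bookkeeping short.

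The representation-theoretic input needed is just the decomposition of the tensor products $\mathbf V_{1,0}\otimes\mathbf V_{1,0}$, $\mathbf V_{1,0}\otimes\mathbf V_{1,1}$, and $\mathbf V_{1,1}\otimes\mathbf V_{1,1}$, all immediate from Lemmas~\ref{lem:rep-thy-1,0} and~\ref{lem:rep-thy-1,1} (for instance $V_{1,0}\otimes V_{1,0}=V_{2,0}\oplus V_{1,1}\oplus V_{0,0}$ and $V_{1,0}\otimes V_{1,1}=V_{2,1}\oplus V_{1,0}$). Feeding these into the K\"unneth sum degree by degree yields the underlying $\Sp(4,\Z)$-representations in the statement. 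To determine the Galois action I would use the weight-matching principle behind the last display of Lemma~\ref{lem:X2-n-local-systems}: since $\mathbf V_{1,0}=R^1\pi_*\Q_\ell$ has Hodge weight $1$ and carries the Weil pairing $\bigwedge^2\mathbf V_{1,0}\to\Q_\ell(-1)$, each constituent appearing in $H^k(A^2)$ must occur as $\mathbf V_{a,b}\left(\frac{a+b-k}{2}\right)$ so that its weight equals $k$, and this twist is forced once the $\Sp(4,\Z)$-decomposition is in hand. As a sanity check one can use Poincar\'e duality on the fibers, $H^k(A^2)\cong H^{8-k}(A^2)^\vee(-4)$ as local systems on $\mathcal A_2$, which moreover lets one carry out the expansion only for $k\le 4$ and read off $k=5,\dots,8$ by duality.

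There is no conceptual obstacle here — the computation is finite and mechanical — but one must be careful with the exceptional cases $(a,b)=(0,0),(1,1)$ of Lemma~\ref{lem:rep-thy-1,1}, where $V_{1,1}\otimes V_{1,1}=V_{0,0}\oplus V_{2,0}\oplus V_{2,2}$ rather than the generic five-term formula. These exceptional terms are exactly what governs the multiplicities of $\mathbf V_{0,0}$ and $\mathbf V_{1,1}$ in the even-degree pieces of the statement, so the correct branch of Lemma~\ref{lem:rep-thy-1,1} must be used whenever a copy of $V_{1,1}$ meets another $V_{1,1}$ (or the trivial representation) in the K\"unneth expansion.
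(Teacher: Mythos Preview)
Your proposal is correct and matches the paper's own proof, which simply states that the lemma is a direct computation using Lemma~\ref{lem:X2-n-local-systems}, Lemma~\ref{lem:rep-thy-1,0}, and Lemma~\ref{lem:rep-thy-1,1}. Your elaboration on the Tate twists via the weight-matching formula in Lemma~\ref{lem:X2-n-local-systems} and the Poincar\'e duality sanity check are useful additions, but the core method is identical.
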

\begin{proof}
This is a direct computation using Lemma \ref{lem:X2-n-local-systems}, Lemma \ref{lem:rec-13}, Lemma \ref{lem:rec-2}, and Lemma \ref{lem:X2-local-systems}.
\end{proof}

Using Lemma \ref{lem:X2-2-local-systems}, we can compute all entries of the $E_2$-page of the Leray spectral sequence for $\pi^2: \mathcal X_2^2 \to \mathcal A_2$. As always, the following results are up to semi-simplification.

\begin{lem}\label{lem:e2-terms-X2-2}
For $q = 1$, $3$, $5$, $7$, and all $p$,
\[
  H^p(\mathcal A_2; H^q(A^{2})) = 0.
\]
For $q = 0$, $8$,
\[
  H^p(\mathcal A_2; H^q(A^{2})) \cong \begin{cases}
  \Q_\ell\left(-\frac q2\right) & p = 0 \\
  \Q_\ell\left(-\frac q2-1\right) & p = 2 \\
  0 & \text{otherwise}.
  \end{cases}
\]
For $q = 2$, $6$,
\[
  H^p(\mathcal A_2; H^q(A^{2})) \cong \begin{cases}
  3\Q_\ell\left(-\frac q2\right) & p = 0\\
  3\Q_\ell\left(-\frac q2-1\right) & p = 2 \\
  3\Q_\ell\left(-\frac{q-2}{2}-5\right) \oplus \Q_\ell\left(-\frac{q-2}{2}-4\right) & p = 3 \\
  0 & \text{otherwise}.
  \end{cases} 
\]
For $q = 4$,
\[
  H^p(\mathcal A_2; H^4(A^{2})) \cong \begin{cases}
  6\Q_\ell(-2) & p = 0 \\
  6\Q_\ell(-3) & p = 2 \\
  3\Q_\ell(-5) \oplus 4\Q_\ell(-6) & p = 3 \\
  0 & \text{otherwise}.
  \end{cases}
\]
\end{lem}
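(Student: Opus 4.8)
The plan is to substitute the decomposition of Lemma~\ref{lem:X2-2-local-systems} into the functor $H^p(\mathcal A_2; -)$ and read off the answer summand by summand. Since $H^p(\mathcal A_2; -)$ is additive and commutes with Tate twists, it is enough to know $H^p(\mathcal A_2; \mathbf V_{a,b})$ for each highest weight $(a,b)$ that appears in Lemma~\ref{lem:X2-2-local-systems}, namely $(0,0)$, $(1,0)$, $(1,1)$, $(2,0)$, $(2,1)$, and $(2,2)$. All of these inputs are already in hand: $H^p(\mathcal A_2; \mathbf V_{0,0}) = H^p(\mathcal A_2; \Q_\ell)$ is $\Q_\ell$ in degree $0$ and $\Q_\ell(-1)$ in degree $2$ by Theorem~\ref{thm:a2}; for the odd-weight types $(1,0)$ and $(2,1)$ we have $H^p(\mathcal A_2; \mathbf V_{a,b}) = 0$ for all $p$ by the involution argument recalled just before Theorem~\ref{thm:petersen}; and $H^p(\mathcal A_2; \mathbf V_{1,1})$, $H^p(\mathcal A_2; \mathbf V_{2,0})$, $H^p(\mathcal A_2; \mathbf V_{2,2})$ are supplied by Corollary~\ref{cor:petersen}, being concentrated in degree $3$ and equal to $\Q_\ell(-5)$, $\Q_\ell(-4)$, and $0$ respectively.

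With these inputs the proof is pure bookkeeping. For $q = 1, 3, 5, 7$ every summand of $H^q(A^2)$ has odd highest weight, so $H^p(\mathcal A_2; H^q(A^2)) = 0$ for every $p$. For $q = 0$ and $q = 8$, the local system $H^q(A^2)$ is a single Tate twist of $\mathbf V_{0,0}$, and Theorem~\ref{thm:a2} gives the claimed two-line answer. For $q = 2, 4, 6$ one splits $H^q(A^2)$ into its $\mathbf V_{0,0}$-part, which contributes in cohomological degrees $0$ and $2$ by Theorem~\ref{thm:a2}, and its $\mathbf V_{1,1}$- and $\mathbf V_{2,0}$-parts, which contribute in degree $3$ by Corollary~\ref{cor:petersen}; the $\mathbf V_{2,2}$-summand that occurs at $q = 4$ contributes nothing. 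Propagating the Tate twist $\Q_\ell\big(\frac{a+b-q}{2}\big)$ attached to each $\mathbf V_{a,b}$-summand in Lemma~\ref{lem:X2-2-local-systems} through these identifications reproduces exactly the table in the statement.

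There is no genuine obstacle here; the lemma is a mechanical consequence of Lemma~\ref{lem:X2-2-local-systems}, Theorem~\ref{thm:a2}, Corollary~\ref{cor:petersen}, and the odd-weight vanishing. The only point demanding care is the weight bookkeeping: one must add the twist carried by a summand $\mathbf V_{a,b}$ of $H^q(A^2)$ to the twist appearing in $H^p(\mathcal A_2; \mathbf V_{a,b})$ itself, and keep track of both across all values of $q$. A convenient internal check is that the outputs are compatible with Poincar\'e duality for $\mathcal X_2^2$, which follows from the symmetry $H^q(A^2) \cong H^{8-q}(A^2)(4-q)$ already visible in Lemma~\ref{lem:X2-2-local-systems}.
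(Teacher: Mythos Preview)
Your proposal is correct and follows essentially the same approach as the paper: both decompose $H^q(A^2)$ via Lemma~\ref{lem:X2-2-local-systems} and then feed in Theorem~\ref{thm:a2}, Corollary~\ref{cor:petersen}, and the odd-weight vanishing (the paper phrases the latter via Proposition~\ref{prop:mult-even-odd}, while you read it directly off the decomposition). The bookkeeping you outline matches the paper's computation exactly.
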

\begin{proof}
By definition, $E^{p,q}_2 = H^p(\mathcal A_2; H^q(A^2)) = \bigoplus_{a \geq b \geq 0} m_{a, b}(H^q(A^{2}))H^p(\mathcal A_2; \mathbf V_{a, b})$. Suppose $q$ is odd. By Proposition \ref{prop:mult-even-odd}, if $m_{a, b}(H^q(A^{2})) \neq 0$, then $a+b \equiv 1 \pmod 2$. For such $(a, b)$, the remarks before Theorem \ref{thm:petersen} imply that $H^p(\mathcal A_2; \mathbf V_{a, b}) = 0$ for all $p$.

For $0 \leq q\leq 8$ even, combine Lemma \ref{lem:X2-n-local-systems}, Lemma \ref{lem:X2-2-local-systems}, Theorem \ref{thm:a2}, and Corollary \ref{cor:petersen}.
\end{proof}
\begin{rmk}
The computations in Lemma \ref{lem:e2-terms-X2-2} are consistent with Theorem \ref{thm:low-degree-computation}.
\end{rmk}

By the usual argument, we obtain the following theorem using these preliminaries.
\begin{thm}\label{thm:2-power-universal-surface}
The cohomology of the second fiber power $\mathcal X_2^2$ of the universal abelian surface is given by

\[
  H^k(\mathcal X_2^2;\Q_\ell) = \begin{cases}
  \Q_\ell & k = 0 \\
  0 & k = 1, 3, k > 10 \\
  4\Q_\ell(-1) & k = 2 \\
  9\Q_\ell(-2) & k = 4 \\
  3\Q_\ell(-5) \oplus \Q_\ell(-4) & k = 5 \\
  9\Q_\ell(-3) & k = 6 \\
  3\Q_\ell(-5) \oplus 4\Q_\ell(-6) & k = 7 \\
  4\Q_\ell(-4) & k = 8 \\
  3\Q_\ell(-7) \oplus \Q_\ell(-6) & k = 9 \\
  \Q_\ell(-5) & k = 10\\
  \end{cases}
\]
up to semi-simplification.
\end{thm}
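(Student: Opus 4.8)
The plan is to run the same argument already used for Theorems \ref{thm:universal-surface} and \ref{thm:low-degree-computation}, now with complete information about the $E_2$-page. By Proposition \ref{prop:leray} applied to $\pi^2 \colon \mathcal X_2^2 \to \mathcal A_2$, there is a spectral sequence
\[
  E_2^{p,q} = H^p(\mathcal A_2; H^q(A^2; \Q_\ell)) \implies H^{p+q}(\mathcal X_2^2; \Q_\ell)
\]
which degenerates on the $E_2$-page. Because it degenerates and everything is taken up to semisimplification, there are no extension issues, so
\[
  H^k(\mathcal X_2^2; \Q_\ell) \cong \bigoplus_{p+q=k} E_2^{p,q}
\]
as Galois representations up to semisimplification. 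The theorem therefore reduces entirely to reading off the $E_2$-page.

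First I would invoke Lemma \ref{lem:e2-terms-X2-2}, which records every nonzero group $H^p(\mathcal A_2; H^q(A^2))$: the odd-$q$ rows vanish, and for each even $q \in \{0,2,4,6,8\}$ the only nonzero entries lie in columns $p = 0, 2$, together with an extra entry in column $p = 3$ when $q \in \{2,4,6\}$, with the explicit Tate twists given there. That lemma in turn rests on Lemma \ref{lem:X2-2-local-systems}, the decomposition of the local systems $H^k(A^2)$ into the irreducible local systems $\mathbf V_{a,b}$ obtained from the recursions of Subsection \ref{sec:fiber-powers-n}, combined with Theorem \ref{thm:a2} and Corollary \ref{cor:petersen} for the cohomology of $\mathcal A_2$ with those coefficients.

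Then I would simply collect, for each $k = 0, \dots, 10$, the entries of Lemma \ref{lem:e2-terms-X2-2} lying on the anti-diagonal $p+q = k$, and observe that all $E_2^{p,q}$ with $p+q > 10$ vanish, since $H^q(A^2) = 0$ for $q > 8$ and, on the coefficient systems appearing, $H^p(\mathcal A_2; \mathbf V_{a,b}) = 0$ for $p > 3$ (and moreover $H^3(\mathcal A_2;\Q_\ell) = 0$, so $E_2^{3,8} = 0$). For example, degree $5$ receives only $E_2^{3,2} = 3\Q_\ell(-5) \oplus \Q_\ell(-4)$, while degree $8$ receives $E_2^{0,8} \oplus E_2^{2,6} = \Q_\ell(-4) \oplus 3\Q_\ell(-4) = 4\Q_\ell(-4)$; carrying this out for each $k$ yields the displayed table.

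As for the main obstacle: essentially nothing hard remains at this stage, since the conceptual input (degeneration of the Leray spectral sequence via Proposition \ref{prop:leray}), the arithmetic input (Petersen's computation, through Corollary \ref{cor:petersen}), and the representation-theoretic input (the recursions pinning down $H^q(A^2)$) have all been established. The only point requiring genuine care is bookkeeping of Tate twists: each $\mathbf V_{a,b}$-summand of $H^q(A^2)$ carries a twist by $\tfrac{a+b-q}{2}$ coming from Lemma \ref{lem:X2-n-local-systems}, and these must be composed correctly with the twists produced by Corollary \ref{cor:petersen}. A useful sanity check along the way is the comparison with Theorem \ref{thm:low-degree-computation} in degrees $\le 5$, which is exactly the consistency already flagged in the remark following Lemma \ref{lem:e2-terms-X2-2}.
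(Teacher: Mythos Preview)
Your proposal is correct and follows essentially the same approach as the paper: invoke Proposition \ref{prop:leray} to obtain the degenerating Leray spectral sequence, use Lemma \ref{lem:e2-terms-X2-2} (itself built from Lemma \ref{lem:X2-2-local-systems}, Theorem \ref{thm:a2}, and Corollary \ref{cor:petersen}) to populate the $E_2$-page, and then sum along anti-diagonals. The paper records the result pictorially in Figure \ref{fig:X2-2-spectral-sequence}, but the content is identical to the bookkeeping you describe.
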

\begin{proof}
By Proposition \ref{prop:leray}, there is a spectral sequence
\[
  E_2^{p,q} = H^p(\mathcal A_2; H^q(A^{2})) \implies H^{p+q}(\mathcal X_2^2; \Q_\ell)
\]
which degenerates on the $E_2$-page. Combining the lemmas in this section gives the entries of the $E_2$-page as recorded in Figure \ref{fig:X2-2-spectral-sequence}.
\end{proof}

\begin{figure}
\centering
\includegraphics{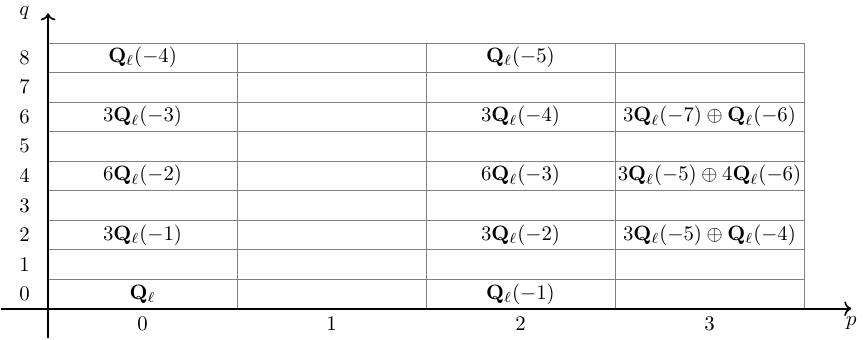}
\caption{The nonzero terms on the $E_2$-page of the Leray spectral sequence for $\pi^2: \mathcal X_2^2 \to \mathcal A_2$.}
\label{fig:X2-2-spectral-sequence}
\end{figure}

\section{Cohomology of $\mathcal X_2^{\Sym(n)}$}\label{sec:symmetric-powers}
In this section, we compute $H^*(\mathcal X_2^{\Sym(n)}; \Q_\ell)$. As opposed to Section \ref{sec:fiber-powers}, we show that the cohomology in fixed degree stabilizes as $n$ increases and explicitly give the computations for small degree. Afterwards, we give a complete description of the cohomology for the case $n = 2$. For brevity, we will often drop the constant coefficients if the context is clear, writing $H^k(A^n)$ instead of $H^k(A^n;\Q_\ell)$.

\subsection{Computations for general $n$.}\label{sec:sym-n}
Let $\pi^n: \mathcal X_2^{\Sym(n)} \to \mathcal A_2$ and $\pi: \mathcal X_2 \to \mathcal A_2$. Let $A$ be an abelian surface. For all $n \geq 1$, the symmetric group $S_n$ acts on $A^n$ by permuting the coordinates, which induces an action of $S_n$ on $H^*(A^n)$. The $S_n$-action on $A^n$ is not free but this is not a problem in the context of stacks. On the other hand, 
\[
  H^k(A^n; \Q_\ell) \cong \bigoplus_{\substack{(k_1, \dots, k_n)\\ \sum k_i = k}} \bigotimes_{i=1}^n H^{k_i}(A; \Q_\ell)
\]
by the K\"unneth formula. 
\begin{lem}\label{lem:sn-fixed}
There are isomorphisms of local systems 
\[
  H^k(\Sym^n A; \Q_\ell) \cong H^k(A^n; \Q_\ell)^{S_n} \cong \bigoplus_{\substack{(k_0, \dots, k_4) \\ \sum_{i=0}^4 ik_i = k \\ \sum_{i=0}^4 k_i = n}} \bigwedge^{k_1} H^1(A) \otimes \Sym^{k_2}H^2(A) \otimes \bigwedge^{k_3} H^3(A)\otimes \Q_\ell(-2k_4).
\]
\end{lem}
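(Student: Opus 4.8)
The plan is to compute the $S_n$-invariants of $H^k(A^n;\Q_\ell)$ using the Künneth decomposition together with the structure of $H^*(A;\Q_\ell)$ as a graded ring equipped with its $\Sp(4,\Z)$-action. The first isomorphism $H^k(\Sym^n A;\Q_\ell)\cong H^k(A^n;\Q_\ell)^{S_n}$ is already established in the proof of Proposition~\ref{prop:leray} (via the Hochschild--Serre spectral sequence for the quotient $\mathcal X_2^n\to\mathcal X_2^{\Sym(n)}$, which degenerates rationally since $S_n$ is finite), so the content is the second isomorphism. Here one must be careful about the sign conventions: $S_n$ acts on $H^k(A^n)\cong\bigoplus_{\sum k_i=k}\bigotimes_i H^{k_i}(A)$ by permuting tensor factors \emph{with the Koszul sign rule}, since cohomology classes in odd degree anticommute.

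The key step is the standard decomposition of the invariants of a permutation action on a tensor power of a graded vector space. First I would partition the multi-indices $(k_1,\dots,k_n)$ with $\sum k_i=k$ and each $k_i\in\{0,1,2,3,4\}$ according to how many indices equal each value $i\in\{0,1,2,3,4\}$; write $k_i$ (now reusing notation, as in the statement) for the number of tensor slots carrying a class from $H^i(A)$, so $\sum_{i=0}^4 k_i=n$ and $\sum_{i=0}^4 i\,k_i=k$. The subgroup of $S_n$ preserving such a labeled configuration is a Young subgroup $S_{k_0}\times S_{k_1}\times S_{k_2}\times S_{k_3}\times S_{k_4}$, and summing the corresponding induced-up pieces recovers $H^k(A^n)$ as an $S_n$-representation. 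Taking $S_n$-invariants and using that invariants of an induced representation from a subgroup are the invariants under that subgroup, the contribution of each configuration becomes
\[
  \left(H^0(A)^{\otimes k_0}\right)^{S_{k_0}}\otimes\left(H^1(A)^{\otimes k_1}\right)^{S_{k_1}}\otimes\left(H^2(A)^{\otimes k_2}\right)^{S_{k_2}}\otimes\left(H^3(A)^{\otimes k_3}\right)^{S_{k_3}}\otimes\left(H^4(A)^{\otimes k_4}\right)^{S_{k_4}}.
\]
Now $H^0(A)\cong\Q_\ell$ and $H^4(A)\cong\Q_\ell(-2)$ are one-dimensional in even degree, so $S_{k_0}$ and $S_{k_4}$ act trivially (up to sign, trivially since even degree) and these factors contribute $\Q_\ell$ and $\Q_\ell(-2k_4)$ respectively. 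For $H^2(A)$, also even degree, $S_{k_2}$ acts by the \emph{untwisted} permutation action, so $(H^2(A)^{\otimes k_2})^{S_{k_2}}=\Sym^{k_2}H^2(A)$. For $H^1(A)$ and $H^3(A)$, odd degree, the Koszul signs make the $S_{k_i}$-action the permutation action twisted by the sign character, so the invariants are the \emph{alternating} part, i.e. $\bigwedge^{k_1}H^1(A)$ and $\bigwedge^{k_3}H^3(A)$. Assembling these and summing over all configurations $(k_0,\dots,k_4)$ with $\sum k_i=n$, $\sum i\,k_i=k$ gives the claimed formula; all the isomorphisms are $\Sp(4,\Z)$-equivariant because the $\Sp(4,\Z)$-action commutes with the $S_n$-action on $A^n$, and the Tate twists are exactly those tracked through Lemma~\ref{lem:X2-local-systems}.

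The main obstacle — really the only subtle point — is getting the sign/Koszul bookkeeping right so that odd-degree factors yield exterior powers rather than symmetric powers; I would state the Koszul rule explicitly and verify it on the transposition swapping two adjacent equal-degree slots, since the general case follows as $S_m$ is generated by adjacent transpositions. A secondary, purely notational, hazard is the collision between the index $k_i$ in the Künneth sum and the multiplicity $k_i$ in the final formula; I would either rename one set of indices or make the re-indexing explicit. Everything else — the decomposition of a tensor power under a Young subgroup, $(\,\cdot\,)^{S_m}$ of induced modules — is routine representation theory, and the one-dimensionality of $H^0,H^4$ (and the explicit $H^2\cong\Q_\ell(-1)\oplus\mathbf V_{1,1}$ from Lemma~\ref{lem:X2-local-systems}, though its internal structure is not needed here, only that it sits in even degree) makes the $k_0,k_2,k_4$ factors immediate.
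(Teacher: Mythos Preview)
Your proposal is correct and follows essentially the same route as the paper: decompose $H^k(A^n)$ into $S_n$-orbits indexed by the type $(k_0,\dots,k_4)$, identify each orbit-piece with an induction from the Young subgroup, and use the Koszul sign on transpositions to see that even-degree factors contribute symmetric powers while odd-degree factors contribute exterior powers. The only cosmetic difference is that the paper carries this out via the explicit averaging projection $p$ and computes its kernel on a fixed summand, whereas you invoke the equivalent abstract fact $(\mathrm{Ind}_H^{S_n}V)^{S_n}\cong V^H$; both arrive at the same sign computation for a transposition inside a block $S_{m_j}$.
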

\begin{proof}
For any abelian surface $A$, there is an isomorphism $H^k(\Sym^n A; \Q_\ell) \to H^k(A^n; \Q_\ell)^{S_n}$ by the Hochschild--Serre spectral sequence for $A^n \to A^n/S_n \cong \Sym^n A$. For any $\sigma \in S_n$, let $Q_\sigma(x_1, \dots, x_n)$ be the sum of all products $x_i x_j$ with $i < j$ which occur in reversed order in the sequence $\sigma^{-1}(1), \dots, \sigma^{-1}(n)$. The induced action of $S_n$ on $H^k(A^n; \Q_\ell)$ is then given by 
\[
  \sigma\cdot (c_1 \otimes \dots \otimes c_n) = (-1)^{\varepsilon}\left(c_{\sigma^{-1}(1)} \otimes \dots \otimes c_{\sigma^{-1}(n)}\right) \in  \bigotimes_{i=1}^n H^{m_{\sigma^{-1}(i)}}(A; \Q_\ell) \subseteq H^k(A^n; \Q_\ell)
\]
for all $c_1 \otimes \dots \otimes c_n \in \bigotimes_{i=1}^n H^{m_i}(A; \Q_\ell)$ with $\varepsilon = Q_\sigma(m_1, \dots, m_n)$ as described in \cite{macdonald}. For example if $\sigma= (\ell, \ell+1)$ is a transposition, then $Q_\sigma(x_1, \dots, x_n) = x_{\ell}x_{\ell+1}$ which encodes the fact that $H^*(A^n; \Q_\ell)$ is a graded commutative ring with respect to the cup product and that for any $c_1, \dots, c_n \in H^*(A; \Q_\ell)$ with $c_i \in H^{k_i}(A; \Q_\ell)$ for some $k_i \geq 0$ for all $1 \leq i \leq n$, the cup product $c_1 \smile \dots \smile c_n$ corresponds to the simple tensor $c_1 \otimes \dots \otimes c_n$ under the K\"unneth isomorphism. There is a relationship between $Q_{\sigma_1}$, $Q_{\sigma_2}$, and $Q_{\sigma_1\sigma_2}$ which encodes the fact that $S_n$ acts on $H^k(A^n; \Q_\ell)$; we refer the reader to \cite[(1.1)-(1.3)]{macdonald} for more properties of the polynomials $Q_\sigma$ since we do not use any of them explicitly in this proof.

There is a projection $p: H^k(A^n; \Q_\ell) \to H^k(A^n; \Q_\ell)^{S_n}$ given by averaging. For each fixed $(m_1, \dots, m_n)$ with $\sum_{i=1}^n m_i = k$ and $m_1 \geq \dots \geq m_n$, consider the $S_n$-subrepresentation 
\[
  W_{m_1, \dots, m_n} := \bigoplus_{\sigma \in S_n} \bigotimes_{i=1}^n H^{m_{\sigma(i)}}(A; \Q_\ell) \subseteq H^k(A^n; \Q_\ell).
\]
The summand corresponding to $1 \in S_n$ above can be written as $\bigotimes_{i=0}^4 H^i(A; \Q_\ell)^{\otimes k_i}$ with $k_i = \#\{m_j: m_j = i\}$. For any simple tensor $c \in \bigotimes_{i=1}^n H^{m_{\sigma(i)}}(A; \Q_\ell)$ in $W_{m_1, \dots, m_n}$, it is straightforward to check that $\sigma \cdot c \in \bigotimes_{i=0}^4 H^i(A; \Q_\ell)^{\otimes k_i}$. Because $W_{m_1, \dots, m_n}$ is spanned over $\Q_\ell$ by such simple tensors $c$ and $p(c) = p(\sigma\cdot c)$ for all $\sigma \in S_n$, the image $p(W_{m_1, \dots, m_n})$ is spanned by the images $p(c)$ of simple tensors $c \in \bigotimes_{i=0}^4 H^i(A; \Q_\ell)^{\otimes k_i}$. Therefore, $p$ restricted to $\bigotimes_{i=0}^4 H^i(A; \Q_\ell)^{\otimes k_i}$ is surjective onto $p(W_{m_1, \dots, m_n})$ with kernel $\langle c - \sigma \cdot c: \sigma \in \prod_{i=0}^4 S_{k_i} \leq S_n\rangle$.

For a transposition $\sigma = (\ell_1 \ell_2) \in S_{k_j} \leq \prod_{i=0}^4 S_{k_i}$ with $\ell_1 > \ell_2$, observe that
\[
  Q_\sigma(m_1, \dots, m_n) = m_{\ell_2}m_{\ell_1}+\sum_{\ell = \ell_2+1}^{\ell_1 - 1} \left(m_{\ell}m_{\ell_1}+m_{\ell_2}m_{\ell}\right)  = j^2 + \sum_{\ell = \ell_2+1}^{\ell_1 - 1} 2j^2 \equiv j \pmod 2.
\]
This implies that for $i \equiv 0 \pmod 2$, 
\[
  H^i(A; \Q_\ell)^{\otimes k_i}/\langle c - \sigma \cdot c: \sigma \in S_{k_i} \rangle \cong \Sym^{k_i} H^i(A; \Q_\ell)
\]
and for $i \equiv 1 \pmod 2$,
\[
  H^i(A; \Q_\ell)^{\otimes k_i}/\langle c - \sigma \cdot c: \sigma \in S_{k_i} \rangle \cong \bigwedge^{k_i} H^i(A; \Q_\ell).
\]
Combining all of the above,
\begin{align*}
  p(W_{m_1, \dots, m_n}) &\cong \bigotimes_{i=0}^4 \left(H^i(A; \Q_\ell)^{\otimes k_i}/\langle c - \sigma \cdot c: \sigma \in S_{k_i} \rangle\right) \\
  &\cong \bigwedge^{k_1} H^1(A; \Q_\ell) \otimes \Sym^{k_2}H^2(A; \Q_\ell) \otimes \bigwedge^{k_3} H^3(A; \Q_\ell) \otimes \Q_\ell.
\end{align*}
Therefore, we have proven the desired isomorphisms on the level of $\Sp(4, \Q_\ell)$-representations. To determine the structure as $\GSp(4, \Q_\ell)$-representations and as local systems, we add in appropriate Tate twists as in the proofs of Lemma \ref{lem:X2-local-systems} and \ref{lem:X2-n-local-systems}. 
\end{proof}
\begin{lem}
For fixed $k$, $H^k(\mathcal X_2^{\Sym(n)}; \Q_\ell)$ stabilizes for $n \geq k$ up to semi-simplification.
\end{lem}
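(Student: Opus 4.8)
The plan is to use the degenerate Leray spectral sequence of Proposition~\ref{prop:leray}, which gives
\[
  H^k(\mathcal X_2^{\Sym(n)}; \Q_\ell) \cong \bigoplus_{p+q=k} H^p(\mathcal A_2; H^q(\Sym^n A)),
\]
so it suffices to show that the local system $H^q(\Sym^n A)$ on $\mathcal A_2$ — equivalently the $\Sp(4,\Z)$-representation of the same name — stabilizes for $n \geq q$ for each fixed $q$ with $0 \leq q \leq k$. (Since $p \geq 0$, only $q \leq k$ contribute, and $n \geq k$ forces $n \geq q$ for all such $q$.) By Lemma~\ref{lem:sn-fixed}, $H^q(\Sym^n A)$ is the direct sum over tuples $(k_0,\dots,k_4)$ with $\sum_i k_i = n$ and $\sum_i i\,k_i = q$ of the term $\bigwedge^{k_1}H^1(A) \otimes \Sym^{k_2}H^2(A) \otimes \bigwedge^{k_3}H^3(A) \otimes \Q_\ell(-2k_4)$. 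Each of $H^1(A)$ and $H^3(A)$ is $4$-dimensional, so $\bigwedge^{k_1}H^1(A) = \bigwedge^{k_3}H^3(A) = 0$ once $k_1 > 4$ or $k_3 > 4$; thus only finitely many tuples contribute a nonzero summand, and each such summand depends only on $k_1,k_2,k_3$ (the twist $\Q_\ell(-2k_4)$ is forced by $q$ and $k_1,k_2,k_3$ via $k_4 = q - k_1 - 2k_2 - 3k_3$, and changes in $n$ are absorbed entirely into $k_0$, the number of trivial $H^0(A)$-factors, which contributes nothing).

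The key point is then the following: the set of tuples $(k_1,k_2,k_3)$ with $k_1,k_3 \leq 4$, $k_1 + 2k_2 + 3k_3 \leq q$, and for which a valid completion to $(k_0,\dots,k_4)$ with $\sum k_i = n$ exists — i.e.\ $k_0 := n - k_1 - k_2 - k_3 - k_4 \geq 0$ — is independent of $n$ once $n$ is large enough. Concretely, the constraint $k_0 \geq 0$ reads $k_1 + k_2 + k_3 + k_4 \leq n$, and since $k_4 = q - k_1 - 2k_2 - 3k_3 \leq q$ and $k_1,k_2,k_3 \leq q$, the left side is at most $4q$ (in fact a cruder bound suffices); more precisely, I would check that for any contributing tuple the sum $k_1 + k_2 + k_3 + k_4$ is at most $q$ — indeed $k_1 + k_2 + k_3 + k_4 \leq k_1 + 2k_2 + 3k_3 + k_4 = q$ using $k_i \geq 0$ — so the condition $k_0 \geq 0$ holds automatically as soon as $n \geq q$. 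Hence for $n \geq q$ the direct sum in Lemma~\ref{lem:sn-fixed} ranges over exactly the same set of summands regardless of $n$, and each summand is literally the same local system. Therefore $H^q(\Sym^n A)$ is independent of $n$ for $n \geq q$, and $H^k(\mathcal X_2^{\Sym(n)}; \Q_\ell)$ is independent of $n$ for $n \geq k$.

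The only mildly delicate step is the bookkeeping inequality showing $k_0 \geq 0$ is automatic for $n \geq q$ — i.e.\ verifying $k_1 + k_2 + k_3 + k_4 \leq q$ for every tuple indexing a nonzero summand — and confirming that no summand that is present for some $n$ disappears for larger $n$ (it cannot: enlarging $n$ only enlarges $k_0$). I do not expect any genuine obstacle here; the vanishing $\bigwedge^{>4}$ of the $4$-dimensional pieces is what makes the index set finite, and the degeneration of the spectral sequence (already proved in Proposition~\ref{prop:leray}) is what lets us pass from stabilization of local systems to stabilization of $H^k$. One should also note in passing that the bound is not claimed to be sharp; the subsequent Theorem~\ref{thm:low-degree-sym-n} extracts the explicit values and in fact gets a slightly better range in odd degrees by a separate parity argument, but that refinement is not needed for the stabilization statement itself.
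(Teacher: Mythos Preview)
Your proof is correct and follows essentially the same approach as the paper: reduce via Proposition~\ref{prop:leray} to showing that $H^q(\Sym^n A)$ stabilizes for $n \geq q$, then use Lemma~\ref{lem:sn-fixed} and the tuple combinatorics (the paper phrases this as a bijection $S(k) \to S(n)$ shifting $k_0$, which is equivalent to your inequality $k_1+k_2+k_3+k_4 \leq q$). Note two small typos in your bookkeeping: the weight condition is $k_1+2k_2+3k_3+4k_4=q$, so $k_4=(q-k_1-2k_2-3k_3)/4$ rather than $q-k_1-2k_2-3k_3$; with the correct coefficient $4k_4$ the inequality $k_1+k_2+k_3+k_4 \leq k_1+2k_2+3k_3+4k_4 = q$ still goes through.
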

\begin{proof}
Fix $k\in \N$. For each $n \in \N$, consider the set 
\[
  S(n) := \left\{(k_0, \dots, k_4) \in \N^5 : \sum_{i=0}^4 ik_i = k, \sum_{i=0}^4 k_i = n\right\}.
\] 
If $n \geq k$, then there is a bijection $S(k) \to S(n)$ given by sending each $(k_0, \dots, k_4) \mapsto (k_0 + (n-k), k_1, \dots, k_4)$. Using this bijection and the fact that $\Sym^m H^0(A; \Q_\ell) \cong \Q_\ell$ for any $m \geq 0$, compute for all $n \geq k$ that
\begin{align*}
H^k(A^k; \Q_\ell)^{S_k} &\cong \bigoplus_{(k_0, \dots, k_4) \in S(k)} \bigwedge^{k_1}H^1(A; \Q_\ell) \otimes \Sym^{k_2} H^2(A; \Q_\ell) \otimes \bigwedge^{k_3}H^3(A; \Q_\ell)\otimes \Q_\ell(-2k_4) \\
&\cong \bigoplus_{(k_0, \dots, k_4) \in S(n)} \bigwedge^{k_1}H^1(A; \Q_\ell) \otimes \Sym^{k_2} H^2(A; \Q_\ell) \otimes \bigwedge^{k_3}H^3(A; \Q_\ell)\otimes \Q_\ell(-2k_4) \cong H^k(A^n; \Q_\ell)^{S_n}.
\end{align*}
By Lemma \ref{lem:sn-fixed} and the above computation, there is an isomorphism of local systems
\[
  H^k(\Sym^k A; \Q_\ell) \cong H^k(\Sym^n A; \Q_\ell)
\]
for all $n \geq k$. 

Up to semi-simplification,
\[
  H^k(\mathcal X_2^{\Sym(n)}; \Q_\ell) = \bigoplus_{p+q=k} H^p(\mathcal A_2; H^q(\Sym^nA; \Q_\ell)) = \bigoplus_{p+q=k} H^p(\mathcal A_2; H^q(\Sym^k A; \Q_\ell))
\]
where the first equality follows by Proposition \ref{prop:leray} and the second equality follows by the first part of the proof which shows that as local systems, 
\[
  H^q(\Sym^q A; \Q_\ell) \cong H^q(\Sym^k A; \Q_\ell) \cong H^q( \Sym^n A; \Q_\ell) 
\]
for all $k, n$ such that $q \leq k \leq n$.
\end{proof}

For small $k$, this simplifies the computations for the local systems $H^k(\Sym^n A; \Q_\ell)$. 
\begin{prop}\label{prop:example-sym-n}
For all $n \geq 1$,
\begin{align*}
H^0(\Sym^nA; \Q_\ell) &\cong \Q_\ell,\\
H^1(\Sym^nA; \Q_\ell) &\cong \mathbf V_{1,0}.
\end{align*}
For all $n \geq 2$, 
\[
  H^2(\Sym^nA; \Q_\ell) \cong 2\Q_\ell(-1) \oplus 2\mathbf V_{1,1}.
\]
For all $n \geq 3$,
\[
  H^3(\Sym^nA; \Q_\ell) \cong 4\mathbf V_{1,0}(-1) \oplus \mathbf V_{2,1}.
\]
For all $n \geq 4$, 
\[
  H^4(\Sym^nA; \Q_\ell) \cong 7\Q_\ell(-2) \oplus 4\mathbf V_{1,1}(-1) \oplus 2 \mathbf V_{2,0}(-1) \oplus 2\mathbf V_{2,2}.
\]
For all $n \geq 5$, 
\[
  H^5(\Sym^nA; \Q_\ell) \cong 10 \mathbf V_{1,0}(-2) \oplus 5 \mathbf V_{2,1}(-1) \oplus \mathbf V_{3,2}.
\]
\end{prop}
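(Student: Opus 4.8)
The plan is to feed the explicit formula of Lemma~\ref{lem:sn-fixed} into the known structure of the local systems $H^i(A;\Q_\ell)$ recorded in Lemma~\ref{lem:X2-local-systems}. By the preceding lemma, $H^k(\Sym^n A;\Q_\ell)$ is independent of $n$ for $n \ge k$ (and for $k=0,1$ the values are immediate already at $n=1$), so for each $k \in \{2,3,4,5\}$ I may compute with $n = k$. The first step is then to enumerate the finitely many tuples $(k_0,\dots,k_4)\in\N^5$ with $\sum_{i=0}^4 i k_i = k$ and $\sum_{i=0}^4 k_i = n$; since $k_0$ merely absorbs the slack $n - (k_1+\dots+k_4)$, this amounts to listing the nonnegative solutions of $k_1 + 2k_2 + 3k_3 + 4k_4 = k$ (there are $2,3,5,6$ of them for $k=2,3,4,5$). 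For each tuple the corresponding summand of $H^k(\Sym^n A)$ is $\bigwedge^{k_1} H^1(A)\otimes\Sym^{k_2}H^2(A)\otimes\bigwedge^{k_3}H^3(A)\otimes\Q_\ell(-2k_4)$, into which I substitute $H^1(A)=\mathbf V_{1,0}$, $H^2(A)=\Q_\ell(-1)\oplus\mathbf V_{1,1}$, $H^3(A)=\mathbf V_{1,0}(-1)$.

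Next I would assemble the short list of representation-theoretic decompositions required. The exterior powers $\bigwedge^{k_1}\mathbf V_{1,0}$ are exactly the $H^{k_1}(A)$ of Lemma~\ref{lem:X2-local-systems} (in particular $\bigwedge^5\mathbf V_{1,0}=0$, which kills the tuple $(5,0,0,0)$ when $k=5$), and only $\bigwedge^0$ and $\bigwedge^1$ of $H^3(A)$ occur. For symmetric powers only $\Sym^{\le 2}$ of $H^2(A)$ appears; expanding $\Sym^2(\Q_\ell(-1)\oplus\mathbf V_{1,1}) = \Q_\ell(-2)\oplus\mathbf V_{1,1}(-1)\oplus\Sym^2\mathbf V_{1,1}$, I would evaluate $\Sym^2\mathbf V_{1,1}$ by noting that the Cartan square $\mathbf V_{2,2}$ occurs there with multiplicity one and that, since $\dim\Sym^2 V_{1,1} = 15 = 14+1$, the complement is one-dimensional, hence trivial: $\Sym^2\mathbf V_{1,1} = \Q_\ell(-2)\oplus\mathbf V_{2,2}$. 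The remaining inputs are the products $\mathbf V_{1,0}\otimes\mathbf V_{1,0}$, $\mathbf V_{1,0}\otimes\mathbf V_{1,1}$, $\mathbf V_{1,1}\otimes\mathbf V_{1,1}$, $\mathbf V_{1,0}\otimes\mathbf V_{2,2}$, each of which follows directly from Lemmas~\ref{lem:rep-thy-1,0} and~\ref{lem:rep-thy-1,1} after discarding the formal summands $V_{a,b}$ with $a<b$. In every case the Tate twist on each irreducible constituent is forced: all these local systems are pure, $\mathbf V_{a,b}$ has Hodge weight $a+b$, so a copy of $V_{a,b}$ inside a local system of weight $w$ must be $\mathbf V_{a,b}\bigl(\tfrac{a+b-w}{2}\bigr)$.

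Finally I would do the bookkeeping: for each $k$, sum the contributions of all tuples and collect coefficients. For instance, when $k=4$ the tuples $(4,0,0,0),(2,1,0,0),(0,2,0,0),(1,0,1,0),(0,0,0,1)$ contribute $\Q_\ell(-2)$ with total multiplicity $1+2+2+1+1=7$, $\mathbf V_{1,1}(-1)$ with $0+2+1+1+0=4$, $\mathbf V_{2,0}(-1)$ with $0+1+0+1+0=2$, and $\mathbf V_{2,2}$ with $0+1+1+0+0=2$, which is the asserted value; the degrees $k=2,3,5$ are handled the same way. The only step that is not entirely mechanical is the $\Sym^2$-versus-$\bigwedge^2$ split of $\mathbf V_{1,1}^{\otimes 2}$, and even that is pinned down by the Cartan-square-plus-dimension argument; the main practical risk is arithmetic error in the enumeration and the multiplicity sums, which I would guard against using the constraints that $\mathbf V_{a,b}(m)$ can occur in $H^k(\Sym^n A)$ only when $a+b\le k$, $a+b\equiv k\pmod 2$, and the total weight equals $k$.
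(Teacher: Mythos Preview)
Your proposal is correct and follows essentially the same approach as the paper: feed Lemma~\ref{lem:sn-fixed} into the known $H^i(A)$, decompose using Lemmas~\ref{lem:rep-thy-1,0} and~\ref{lem:rep-thy-1,1}, and fix Tate twists by weight. The only cosmetic difference is that the paper cites the general identity $\Sym^a V_{1,1} \cong \bigoplus_{j=0}^{\lfloor a/2\rfloor} V_{a-2j,a-2j}$ from \cite[Exercise~16.11]{fulton--harris} in place of your Cartan-square-plus-dimension argument for the single case $a=2$.
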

\begin{proof}
The necessary facts from the representation theory of $\Sp(4, \Q_\ell)$ are Lemma \ref{lem:rep-thy-1,0}, Lemma \ref{lem:rep-thy-1,1}, and \cite[Exercise 16.11]{fulton--harris} which says that $\Sym^a W_{1,1} \cong \bigoplus_{k=0}^{\lfloor \frac a2 \rfloor} W_{a - 2k, a - 2k}$ for any $a \geq 0$. Applying these facts to the direct sum given by Lemma \ref{lem:sn-fixed} gives the decomposition into irreducible $\Sp(4, \Q_\ell)$-representations as claimed. Finally, add appropriate Tate twists as in the proof of Lemma \ref{lem:X2-local-systems}. 

As an example, we work out the computation for $H^4(\Sym^n A; \Q_\ell)$ for $n \geq 4$ explicitly. For $k = 4$, the tuples $(k_0, \dots, k_4) \in \N^5$ satisfying $\sum_{i=0}^4 ik_i = k = 4$ and $\sum_{i=0}^4 k_i = n$ are
\[
  (n-1, 0, 0, 0, 1), \, (n-2, 1, 0, 1, 0), \, (n-2, 0, 2, 0, 0), \, (n-3, 2, 1, 0, 0), \, (n-4, 4, 0, 0, 0).
\]
Lemma \ref{lem:sn-fixed} gives 
\[
  H^4(\Sym^n A; \Q_\ell) \cong H^4(A) \oplus \left(H^1(A) \otimes H^3(A)\right) \oplus \Sym^2 H^2(A) \oplus \left(\bigwedge^2 H^1(A) \otimes H^2(A)\right) \oplus \bigwedge^4 H^1(A).
\]
Decompose each summand (as $\Sp(4, \Q_\ell)$-representations) into a direct sum of irreducible representations:
\begin{enumerate}
  \item By Lemma \ref{lem:X2-local-systems},
  \[
    H^4(A; \Q_\ell) \cong \Q_\ell.
  \]

  \item By Lemmas \ref{lem:X2-local-systems} and \ref{lem:rep-thy-1,0} for the first and second isomorphisms respectively,
  \[
    H^1(A) \otimes H^3(A) \cong W_{1,0} \otimes W_{1,0} \cong \Q_\ell \oplus W_{2,0} \oplus W_{1,1}.
  \]

  \item By Lemma \ref{lem:X2-local-systems} and \cite[(B.2)]{fulton--harris} for the first and second isomorphisms respectively, 
  \begin{align*}
  \Sym^2 H^2(A) &\cong \Sym^2 (\Q_\ell \oplus W_{1,1}) \cong \bigoplus_{a = 0}^2 \Sym^a \Q_\ell \otimes \Sym^{2-a} W_{1,1} = \Sym^2 W_{1,1} \oplus W_{1,1} \oplus \Q_\ell.
  \end{align*}
  By \cite[Exercise 16.11]{fulton--harris}, 
  \[
    \Sym^2 W_{1,1} \oplus W_{1,1} \oplus \Q_\ell \cong \left( W_{2,2} \oplus W_{0,0} \right) \oplus W_{1,1} \oplus \Q_\ell \cong W_{2,2} \oplus W_{1,1} \oplus 2\Q_\ell.
  \]

  \item By Lemma \ref{lem:X2-local-systems} and distributivity of tensor products over direct sums,
  \[
    \bigwedge^2 H^1(A) \otimes H^2(A) \cong (\Q_\ell \oplus W_{1,1}) \otimes (\Q_\ell \oplus W_{1,1}) \cong \Q_\ell \oplus 2 W_{1,1} \oplus (W_{1,1} \otimes W_{1,1}). 
  \]
  By Lemma \ref{lem:rep-thy-1,1}, 
  \[
    \bigwedge^2 H^1(A) \otimes H^2(A) \cong \Q_\ell \oplus 2 W_{1,1} \oplus (\Q_\ell \oplus W_{2,0} \oplus W_{2,2}) = 2\Q_\ell \oplus 2W_{1,1} \oplus W_{2,0} \oplus W_{2,2}.
  \]

  \item By Lemma \ref{lem:X2-local-systems},
  \[
    \bigwedge^4 H^1(A) \cong \Q_\ell.
  \]
\end{enumerate}

Collect all the terms above to see that as $\Sp(4, \Q_\ell)$-representations,
\[
  H^4(\Sym^nA; \Q_\ell) \cong 7\Q_\ell \oplus 4W_{1,1} \oplus 2 W_{2,0}  \oplus 2W_{2,2}
\]
as claimed. Now add in Tate twists to the local systems corresponding to the appropriate $\GSp(4,\Q_\ell)$-representations as in the proof of Lemma \ref{lem:X2-local-systems}.
\end{proof}

Proposition \ref{prop:example-sym-n} provides the inputs to the computation of the cohomology of $\mathcal X_2^{\Sym(n)}$ in the same way as in Sections \ref{sec:universal-surface} and \ref{sec:fiber-powers}.
\begin{thm:low-degree-sym-n}
For all $n \geq k$ for $k$ even and for all $n \geq k-1$ for $k$ odd,
\[
  H^k(\mathcal X_2^{\Sym(n)}; \Q_\ell) = \begin{cases}
  \Q_\ell & k = 0 \\
  0 & k = 1, 3 \\
  3\Q_\ell(-1) & k = 2 \\
  9\Q_\ell(-2) & k = 4 \\
  2\Q_\ell(-5) & k = 5
  \end{cases}
\]
up to semi-simplification.
\end{thm:low-degree-sym-n}
\begin{proof}
Denote the $(p, q)$-entry on the $E_2$-sheet of the Leray spectral sequence (given by Proposition \ref{prop:leray}) of $\pi^n: \mathcal X_2^{\Sym(n)} \to \mathcal A_2$ by $E_2^{p, q}(n) = H^p(\mathcal A_2; H^q(\Sym^n A))$. This spectral sequence degenerates on the $E_2$-page. Applying Proposition \ref{prop:example-sym-n} and Corollary \ref{cor:petersen} yields $E_2^{p, q}(n)$ for $n \geq q$ and $q = 0$, $2$, $4$, which we record in Figure \ref{fig:Sym-n-spectral-sequence}. Observe also that $E_2^{p, q}(n) = 0$ for all $n \geq 0$ if $q$ is odd or if $p > 4$ by Theorem \ref{thm:petersen}. The theorem now follows directly.
\end{proof}

\begin{figure}
\centering
\includegraphics[width=\textwidth]{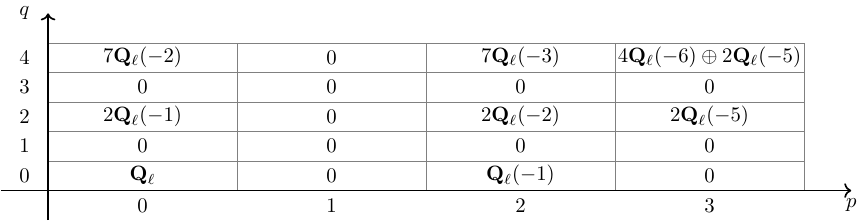}
\caption{Some nonzero terms $E_2^{p, q}(n)$ of the Leray spectral sequence for $\pi^n: \mathcal X_2^{\Sym(n)} \to \mathcal A_2$, for $n \geq q$ for $q$ even and for all $n \geq 0$ for $q$ odd. Note that $E_2^{1,q} = 0$ for all $q \geq 0$ and $E_2^{p, q} = 0$ for all $p \geq 4$ and $q = 0, 1, 2, 3$.}
\label{fig:Sym-n-spectral-sequence}
\end{figure}

\subsection{Explicit computations for $n = 2$.}
We compute $H^*(\mathcal X_2^{\Sym(2)}; \Q_\ell)$ completely. We first need the following.

\begin{lem}\label{lem:Sym2-local-systems}
There are isomorphisms of local systems
\begin{align*}
H^k(\Sym^2A; \Q_\ell) &\cong \begin{cases}
\Q_\ell & k = 0 \\
\mathbf V_{1,0} & k = 1 \\
2\Q_\ell(-1) \oplus 2\mathbf V_{1,1} &k = 2\\
3\mathbf V_{1,0}(-1) \oplus \mathbf V_{2, 1} & k = 3\\
4\Q_\ell(-2) \oplus 2 \mathbf V_{1,1}(-1) \oplus \mathbf V_{2,0}(-1) \oplus \mathbf V_{2, 2} & k = 4 \\
3\mathbf V_{1,0}(-2) \oplus \mathbf V_{2, 1}(-1) & k = 5\\
2\Q_\ell(-3) \oplus 2\mathbf V_{1,1}(-2) &k = 6\\
\mathbf V_{1,0}(-3) & k = 7 \\
\Q_\ell(-4) & k = 8 \\
0 & k > 8.
\end{cases}
\end{align*}
\end{lem}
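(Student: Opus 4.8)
The plan is a direct computation, specializing Lemma~\ref{lem:sn-fixed} to $n=2$. For each degree $k$ I would first enumerate the tuples $(k_0,k_1,k_2,k_3,k_4)\in\N^5$ with $\sum_{i=0}^4 k_i = 2$ and $\sum_{i=0}^4 i k_i = k$; since the $k_i$ sum to $2$, at most two of them are nonzero and each such one is $1$, so for every $k$ this is a list of at most three tuples, and it is empty for $k>8$ (which yields the asserted vanishing $H^k(\Sym^2 A;\Q_\ell)=0$ for $k>8$, consistent with $\dim_\C\Sym^2 A = 4$). Each tuple contributes, via Lemma~\ref{lem:sn-fixed}, the summand $\bigwedge^{k_1}H^1(A)\otimes\Sym^{k_2}H^2(A)\otimes\bigwedge^{k_3}H^3(A)\otimes\Q_\ell(-2k_4)$, into which I substitute the local systems $H^i(A)$ from Lemma~\ref{lem:X2-local-systems}.

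Then I would evaluate the exterior/symmetric powers and tensor products that arise. The ingredients are: $\bigwedge^2 H^1(A)\cong H^2(A)\cong\Q_\ell(-1)\oplus\mathbf V_{1,1}$ and $\bigwedge^2 H^3(A)\cong\Q_\ell(-3)\oplus\mathbf V_{1,1}(-2)$, both because $H^\ast(A)$ is the exterior algebra on $H^1(A)$ together with the decomposition of $\bigwedge^2$ of the standard $\Sp(4)$-representation; the tensor products $\mathbf V_{1,0}\otimes\mathbf V_{1,0}$ and $\mathbf V_{1,0}\otimes\mathbf V_{1,1}$, computed from Lemmas~\ref{lem:rep-thy-1,0} and~\ref{lem:rep-thy-1,1}; and $\Sym^2\mathbf V_{1,1}\cong\mathbf V_{2,2}\oplus\Q_\ell(-2)$, the $a=2$ case of \cite[Exercise 16.11]{fulton--harris}. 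The only place a symmetric square of a nontrivial local system genuinely enters is $\Sym^2 H^2(A)=\Sym^2(\Q_\ell(-1)\oplus\mathbf V_{1,1})\cong\Q_\ell(-2)\oplus\mathbf V_{1,1}(-1)\oplus\Sym^2\mathbf V_{1,1}$, relevant in degree $k=4$. In each resulting summand the Tate twist is pinned down by matching Hodge weights, exactly as in the proof of Lemma~\ref{lem:X2-local-systems}, using that weights add under $\otimes$, $\bigwedge$, and $\Sym$ and that the weight of each $H^i(A)$ is as recorded in Lemma~\ref{lem:X2-local-systems}.

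There is no conceptual obstacle here; the only real work is the bookkeeping of Tate twists alongside the enumeration, and the place most prone to error is the middle range $k=3,4,5$, where several tuples contribute simultaneously. I would sanity-check the final table using Poincar\'e duality for the quotient stack $\Sym^2 A=[A^2/S_2]$ of dimension $4$, namely $H^k(\Sym^2 A;\Q_\ell)\cong H^{8-k}(\Sym^2 A;\Q_\ell)^{\vee}\otimes\Q_\ell(-4)$ with $\mathbf V_{a,b}^{\vee}\cong\mathbf V_{a,b}(a+b)$: one checks that the asserted table is self-dual under this involution (pairing $k=0$ with $k=8$, $k=1$ with $k=7$, $k=2$ with $k=6$, $k=3$ with $k=5$, and $k=4$ with itself). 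A further cross-check is that in degrees $k\le 2$ the table agrees with the $n=2$ instance of Proposition~\ref{prop:example-sym-n}.
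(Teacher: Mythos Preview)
Your proposal is correct and is exactly the approach the paper takes: the paper's proof consists of the single sentence ``This is a direct computation using Lemma~\ref{lem:sn-fixed},'' and you have spelled out precisely that computation, down to the same auxiliary inputs (Lemmas~\ref{lem:X2-local-systems}, \ref{lem:rep-thy-1,0}, \ref{lem:rep-thy-1,1}, and \cite[Exercise~16.11]{fulton--harris}) invoked elsewhere in the paper. One small slip in your write-up: it is not true that ``each such [nonzero $k_i$] is $1$''---the tuples with a single $k_i=2$ also occur---but you clearly account for these later when you compute $\bigwedge^2 H^1(A)$, $\Sym^2 H^2(A)$, and $\bigwedge^2 H^3(A)$, so this does not affect the argument.
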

\begin{proof}
This is a direct computation using Lemma \ref{lem:sn-fixed}.
\end{proof}

As usual, we want to compute $H^p(\mathcal A_2; H^q(\Sym^nA))$ for all $p$, $q \geq 0$. With Lemma \ref{lem:Sym2-local-systems}, this process is completely analogous to that of Sections \ref{sec:universal-surface} and \ref{sec:fiber-powers}. Therefore, we list the results below and omit the explanations. 
\begin{thm}\label{thm:sym-2}
The cohomology of $\mathcal X_2^{\Sym(2)}$ is given by
\[
  H^k(\mathcal X_2^{\Sym(2)}; \Q_\ell) = \begin{cases}
  \Q_\ell & k = 0 \\
  0 & k = 1, 3, k > 10 \\
  3\Q_\ell(-1) & k = 2 \\
  6\Q_\ell(-2) & k = 4 \\
  2\Q_\ell(-5) & k = 5 \\
  6\Q_\ell(-3) & k = 6 \\
  2\Q_\ell(-6) \oplus \Q_\ell(-5)& k = 7 \\
  3\Q_\ell(-4) & k = 8  \\
  2\Q_\ell(-7) & k = 9 \\
  \Q_\ell(-5) & k = 10
  \end{cases}
\]
up to semi-simplification.
\end{thm}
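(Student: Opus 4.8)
The plan is to run the same argument used for Theorems~\ref{thm:2-power-universal-surface} and~\ref{thm:low-degree-sym-n}. Proposition~\ref{prop:leray} provides a Leray spectral sequence
\[
  E_2^{p,q} = H^p\!\big(\mathcal A_2;\, H^q(\Sym^2 A; \Q_\ell)\big) \implies H^{p+q}(\mathcal X_2^{\Sym(2)}; \Q_\ell)
\]
which degenerates on the $E_2$-page, so that $H^k(\mathcal X_2^{\Sym(2)}; \Q_\ell) = \bigoplus_{p+q=k} E_2^{p,q}$. Hence the entire computation reduces to evaluating $H^p(\mathcal A_2; H^q(\Sym^2 A; \Q_\ell))$ for all $p$ and all $0 \le q \le 8$; the range of $q$ is finite since $H^q(\Sym^2 A; \Q_\ell)=0$ for $q>8$ by Lemma~\ref{lem:Sym2-local-systems}.

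To assemble the inputs, I would first read off the decompositions of the local systems $H^q(\Sym^2 A; \Q_\ell)$ from Lemma~\ref{lem:Sym2-local-systems}: up to Tate twist the only irreducible constituents that occur are $\Q_\ell$, $\mathbf V_{1,0}$, $\mathbf V_{1,1}$, $\mathbf V_{2,0}$, $\mathbf V_{2,1}$, and $\mathbf V_{2,2}$. For each of these I record $H^\bullet(\mathcal A_2;-)$: Theorem~\ref{thm:a2} handles the constant coefficients; the remark preceding Theorem~\ref{thm:petersen} gives $H^\bullet(\mathcal A_2;\mathbf V_{1,0}) = H^\bullet(\mathcal A_2;\mathbf V_{2,1}) = 0$ since $1$ and $3$ are odd; and Corollary~\ref{cor:petersen} gives that $H^\bullet(\mathcal A_2;\mathbf V_{1,1})$ equals $\Q_\ell(-5)$ concentrated in degree $3$, that $H^\bullet(\mathcal A_2;\mathbf V_{2,0})$ equals $\Q_\ell(-4)$ concentrated in degree $3$, and that $H^\bullet(\mathcal A_2;\mathbf V_{2,2})=0$. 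Tensoring these with the relevant Tate twists appearing in Lemma~\ref{lem:Sym2-local-systems} and summing over the constituents of each $H^q(\Sym^2 A;\Q_\ell)$ yields every $E_2^{p,q}$. The outcome is that the $E_2$-page is supported only in columns $p\in\{0,2,3\}$ and even rows $q\in\{0,2,4,6,8\}$, with the column $p=3$ entries coming precisely from the $\mathbf V_{1,1}$- and $\mathbf V_{2,0}$-isotypic parts — for instance $E_2^{3,2}=2\Q_\ell(-5)$ and $E_2^{3,4}=2\Q_\ell(-6)\oplus\Q_\ell(-5)$.

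Finally I would tabulate the $E_2$-page in a figure analogous to Figures~\ref{fig:X2-2-spectral-sequence} and~\ref{fig:Sym-n-spectral-sequence} and sum along the antidiagonals $p+q=k$ to read off the asserted groups; since the top nonzero entry is $E_2^{2,8}=\Q_\ell(-5)$, the cohomology vanishes for $k>10$. There is no genuine obstacle here: all the real content — the $S_n$-invariant decomposition of $H^q(A^2)$ in Lemma~\ref{lem:Sym2-local-systems} and the cohomology of $\mathcal A_2$ with small $\mathbf V_{a,b}$ coefficients in Corollary~\ref{cor:petersen} — has already been established, so the remaining work is bookkeeping. The one point that requires care is tracking the Tate twists correctly through the tensor products, and the natural sanity check is that the answer must agree with Theorem~\ref{thm:low-degree-sym-n} in the overlapping stable range, i.e.\ $H^0=\Q_\ell$, $H^1=0$, $H^2=3\Q_\ell(-1)$, and $H^3=0$.
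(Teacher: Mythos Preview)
Your proposal is correct and follows essentially the same route as the paper: invoke the degenerating Leray spectral sequence of Proposition~\ref{prop:leray}, feed in the decomposition of $H^q(\Sym^2 A;\Q_\ell)$ from Lemma~\ref{lem:Sym2-local-systems}, evaluate each constituent using Theorem~\ref{thm:a2}, the parity remark before Theorem~\ref{thm:petersen}, and Corollary~\ref{cor:petersen}, and then sum along antidiagonals. Your sample computations of $E_2^{3,2}$ and $E_2^{3,4}$ and your observation that the page is supported in $p\in\{0,2,3\}$ and even $q$ are exactly right.
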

\begin{proof}
By Proposition \ref{prop:leray}, there is a spectral sequence with $E_2^{p, q} = H^p(\mathcal A_2; H^q(\Sym^2 A))$ which degenerates on the $E_2$-page and converges to $H^*(\mathcal X_2^{\Sym(2)}; \Q_\ell)$. Using Lemma \ref{lem:Sym2-local-systems} and Corollary \ref{cor:petersen}, we can compute $E_2^{p,q}$ for all $p, q \geq 0$. The results are recorded in Figure \ref{fig:Sym2-spectral-sequence}, from which the theorem follows directly.
\end{proof}

\begin{figure}
\centering
\includegraphics[width=\textwidth]{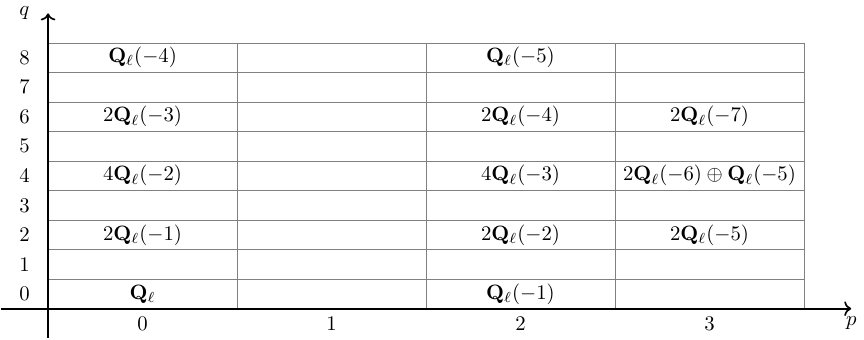}
\caption{The nonzero terms on the $E_2$-page of the Leray spectral sequence for $\pi^2: \mathcal X_2^{\Sym(2)} \to \mathcal A_2$.}
\label{fig:Sym2-spectral-sequence}
\end{figure}

\begin{rmk}
Note for all $n \geq 2$, the $E_2^{p, q}$ term in the spectral sequence for $\mathcal X_2^{\Sym(n)} \to \mathcal A_2$ for $q = 0, 1, 2, 3$ remain stable and are given in the corresponding entries in Figure \ref{fig:Sym2-spectral-sequence}.
\end{rmk}

\section{Arithmetic Statistics}\label{sec:arithmetic-statistics}
In this section, we apply the cohomological results of the previous sections to obtain arithmetic statistics results about abelian surfaces over finite fields. In Section \ref{sec:level-structures} we point out that the techniques of this paper can be applied to give arithmetic statistics about abelian surfaces with an ordered basis of its $N$-torsion given the cohomology of local systems of $\mathcal A_2[N]$, and apply the conjectural formulas (\cite{bergstrom-faber-vandergeer}) in the case $N = 2$ as an example.

Given the \'etale cohomology of a variety over a finite field $\F_q$, one can use the Grothendieck--Lefschetz trace formula to immediately deduce the number of $\F_q$-points on the variety. Even though the spaces $\mathcal X$ studied in this paper are not varieties but rather algebraic stacks, there is fortunately an applicable generalization, the Grothendieck--Lefschetz--Behrend trace formula, which gives the \emph{groupoid cardinality} of their $\F_q$-points.
\begin{defn}
Let $X$ be a groupoid. The \emph{groupoid cardinality} $\#X$ is defined as

\[
  \#X = \sum_{x \in X}\frac{1}{\#\Aut(x)}.
\]
\end{defn}

The following definition is necessary in order to state the Grothendieck--Lefschetz--Behrend trace formula.
\begin{defn}
Let $\mathcal X$ be a smooth Deligne--Mumford stack of finite type over $\F_q$. Let $\mathcal X_{\overline \F_q, \text{sm}}$ be the smooth site associated to $\mathcal X_{\overline \F_q}$. The arithmetic Frobenius acting on $H^i(\mathcal X_{\overline \F_q, \text{sm}}, \Q_\ell)$ is denoted by $\Phi_q: H^i(\mathcal X_{\overline \F_q, \text{sm}}, \Q_\ell) \to H^i(\mathcal X_{\overline \F_q,\text{sm}}, \Q_\ell)$. The action of $\Phi_q$ on $\Q_\ell(1)$ is multiplication by $q$. For any $n \in \Z$, the action of $\Phi_q$ on $\Q_\ell(n)$ is multiplication by $q^n$. 
\end{defn}

\begin{rmk}
For Deligne--Mumford stacks, the \'etale and smooth cohomology of abelian sheaves coincide: the \'etale and smooth cohomology of an abelian sheaf on schemes coincide by \cite[\href{https://stacks.math.columbia.edu/tag/03YY}{Lemma 03YY}]{stacks-project} and so the same holds for Deligne--Mumford stacks by \'etale descent since such stacks admit \'etale covers by schemes. All stacks in this section are Deligne--Mumford stacks over finite fields $\F_q$ (or their algebraic closures $\overline \F_q$) of any characteristic (see Section \ref{sec:spaces}). We will omit the distinction and just write $H^*(\mathcal X; \Q_\ell)$ for \'etale (or smooth) cohomology of $\mathcal X_{\overline \F_q}$.
\end{rmk}

The main tool of this section is the following trace formula.
\begin{thm}[Grothendieck--Lefschetz--Behrend trace formula, {\cite[Theorem 3.1.2]{behrend-inventiones}}]\label{thm:glb-trace}
Let $\mathcal X$ be a smooth Deligne--Mumford stack of finite type and constant dimension over the finite field $\F_q$. Then 
\[
  q^{\dim \mathcal X} \sum_{k \geq0} (-1)^k (\tr \Phi_q \mid H^k(\mathcal X_{\overline \F_q, \text{sm}}, \Q_\ell)) = \sum_{\xi \in[\mathcal X(\F_q)]} \frac{1}{\#\Aut(\xi)} =: \#\mathcal X(\F_q).
\]
\end{thm}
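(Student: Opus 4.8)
The plan is to deduce the formula from the ordinary Grothendieck--Lefschetz trace formula for schemes by exploiting a presentation of $\mathcal X$ as a finite quotient. Since $\mathcal X$ is a smooth Deligne--Mumford stack of finite type over $\F_q$, one may stratify it into locally closed substacks on each of which the statement is reduced (both sides being additive in such stratifications once one works with compactly supported cohomology) to the case $\mathcal X = [X/G]$ with $G$ a finite group and $X$ a smooth separated finite-type $\F_q$-scheme; for the stacks of interest in this paper such a presentation is given by construction, so this reduction costs nothing. The key structural input for the quotient case is that $\ell$-adic cohomology of $[X/G]$ is computed by Hochschild--Serre, $H^k(\mathcal X_{\overline\F_q};\Q_\ell)\cong H^k(X_{\overline\F_q};\Q_\ell)^G$ (using that $\#G$ is prime to $\ell$); in particular these groups are finite-dimensional and vanish outside a bounded range, so the alternating sum in the statement is a genuine finite sum, and since $\tfrac1{\#G}\sum_{g\in G}g$ is the $G$-invariant projector and commutes with the arithmetic Frobenius $\Phi_q$, one has $\tr(\Phi_q\mid H^k(\mathcal X))=\tfrac1{\#G}\sum_{g\in G}\tr(g\circ\Phi_q\mid H^k(X_{\overline\F_q}))$.

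Next I would compute the right-hand side for $[X/G]$. An object of the groupoid $[X/G](\F_q)$ is a $G$-torsor $P$ over $\F_q$ together with a $G$-equivariant map $P\to X$; isomorphism classes of $G$-torsors over $\F_q$ are classified by $H^1(\Gal(\overline\F_q/\F_q),G)$, i.e.\ by conjugacy classes in $G$ since the Galois group is procyclic, and the torsor $P_g$ attached to $g$ has inner twist $X_g := P_g\times^G X$, an $\F_q$-form of $X$ whose $\F_q$-points correspond to the equivariant maps $P_g\to X$. Unwinding groupoid cardinality (weighting each object by the reciprocal of its automorphism group) yields $\#[X/G](\F_q)=\tfrac1{\#G}\sum_{g\in G}\#\{x\in X(\overline\F_q): g^{-1}\cdot\Frob_q(x)=x\}=\tfrac1{\#G}\sum_{g\in G}\#X_g(\F_q)$, where the $g$-summand counts the fixed locus of the geometric Frobenius endomorphism of $X_g$.

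Finally I would match the two expressions summand by summand in $g$. Applying the usual Grothendieck--Lefschetz trace formula to the separated finite-type scheme $X_g$ gives $\#X_g(\F_q)=\sum_k(-1)^k\tr(\Frob_q\mid H^k_c((X_g)_{\overline\F_q};\Q_\ell))$, and since $X_g$ is smooth of dimension $\dim\mathcal X$, Poincar\'e duality rewrites this as $q^{\dim\mathcal X}\sum_k(-1)^k\tr(\Phi_{q,g}\mid H^k((X_g)_{\overline\F_q};\Q_\ell))$ where $\Phi_{q,g}$ is the arithmetic Frobenius of $X_g$; a base-change identification $H^*((X_g)_{\overline\F_q};\Q_\ell)\cong H^*(X_{\overline\F_q};\Q_\ell)$ sends $\Phi_{q,g}$ to $g\circ\Phi_q$, so summing over $g\in G$, dividing by $\#G$, and using the Hochschild--Serre identity above recovers exactly $q^{\dim\mathcal X}\sum_k(-1)^k\tr(\Phi_q\mid H^k(\mathcal X_{\overline\F_q};\Q_\ell))$. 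The main obstacle is not any single calculation but the structural groundwork for a \emph{general} smooth Deligne--Mumford stack of finite type: justifying the reduction to finite quotients (invoking the local structure theory of such stacks together with a d\'evissage/stratification argument, and checking that passing between $H^*$ and $H^*_c$ and between a stack and its strata is compatible), and establishing that the $\ell$-adic cohomology of such stacks is finite-dimensional and bounded so that the alternating sum is well-defined. These foundations are exactly what is developed in \cite{behrend-inventiones}; for the stacks appearing in this paper the finite-quotient presentation is available from the start, so only the elementary argument for $[X/G]$ sketched above is actually needed.
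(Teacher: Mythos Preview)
The paper does not prove Theorem~\ref{thm:glb-trace} at all: it is quoted as \cite[Theorem~3.1.2]{behrend-inventiones} and used as a black box. So there is no ``paper's own proof'' to compare against.

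Your sketch is nonetheless a correct and standard argument for the case $\mathcal X=[X/G]$ with $G$ finite and $X$ a smooth separated finite-type $\F_q$-scheme, and as you observe, that is the only case actually needed in this paper (every stack appearing here is explicitly presented as such a quotient in Subsection~\ref{sec:spaces}). One small remark: the parenthetical ``using that $\#G$ is prime to $\ell$'' is unnecessary, since with $\Q_\ell$-coefficients the higher group cohomology of a finite group vanishes automatically and Hochschild--Serre collapses regardless. The reduction from a general smooth Deligne--Mumford stack of finite type to the quotient case via stratification and d\'evissage is, as you say, the substantive content of Behrend's paper, and you are right not to attempt to reproduce it here.
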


\subsection{Applying the Grothendieck--Lefschetz--Behrend trace formula.}\label{sec:glb-trace}
In this subsection, we apply the trace formula (Theorem \ref{thm:glb-trace}) to deduce corollaries of the cohomological results of the previous sections. Although all cohomology computations in the previous sections are only up to semi-simplification, the trace of a linear operator does not change under semi-simplification. Therefore, we apply the trace formula (Theorem \ref{thm:glb-trace}) to the semi-simplification without making this distinction.

Recall that the interpretation of the $\F_q$-points of each stack in question is given in Section \ref{sec:spaces}. This first count is well-known, but we list it below for completeness.
\begin{thm}
\[
  \#\mathcal A_2(\F_q) = q^3 + q^2. 
\]
\end{thm}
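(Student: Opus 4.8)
The plan is to apply the Grothendieck--Lefschetz--Behrend trace formula (Theorem \ref{thm:glb-trace}) to the stack $\mathcal A_2$ over $\F_q$, using the cohomology computed in Theorem \ref{thm:a2}. First I would note that $\mathcal A_2$ is a smooth Deligne--Mumford stack of finite type over $\F_q$ of constant dimension $\dim \mathcal A_2 = 3$ (it is a complement of a normal crossing divisor in a smooth proper stack over $\Z$, as recalled in Section \ref{sec:spaces}), so Theorem \ref{thm:glb-trace} applies and gives
\[
  \#\mathcal A_2(\F_q) = q^3 \sum_{k \geq 0} (-1)^k \bigl(\tr \Phi_q \mid H^k(\mathcal A_2; \Q_\ell)\bigr).
\]

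Next I would plug in the cohomology groups. By Theorem \ref{thm:a2}, the only nonzero cohomology is $H^0(\mathcal A_2; \Q_\ell) = \Q_\ell$ and $H^2(\mathcal A_2; \Q_\ell) = \Q_\ell(-1)$. The arithmetic Frobenius $\Phi_q$ acts trivially on $H^0$, contributing $\tr \Phi_q = 1$, and acts on $\Q_\ell(-1)$ by multiplication by $q^{-1}$ by the conventions fixed in Section \ref{sec:arithmetic-statistics} (the action of $\Phi_q$ on $\Q_\ell(n)$ is multiplication by $q^n$, so on $\Q_\ell(-1)$ it is $q^{-1}$). Hence the alternating sum of traces is $1 + q^{-1}$, and multiplying by $q^{\dim \mathcal A_2} = q^3$ gives $\#\mathcal A_2(\F_q) = q^3(1 + q^{-1}) = q^3 + q^2$, as claimed.

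The only point that requires care — and the main (minor) obstacle — is bookkeeping the Tate-twist convention: one must be consistent about whether one uses the arithmetic or geometric Frobenius and verify that with the convention of Theorem \ref{thm:glb-trace} the class $\Q_\ell(-1)$ (which geometric Frobenius scales by $q$) contributes $q^{-1}$ to $\tr \Phi_q$, so that the net contribution of $H^2$ to $q^3 \sum (-1)^k \tr \Phi_q$ is $+q^2$ rather than, say, $+q^4$. Once this sign/exponent check is done, the computation is immediate. (As a sanity check, this agrees with the classical fact that the coarse space count, weighted by automorphisms, of principally polarized abelian surfaces over $\F_q$ is $q^3 + q^2$.)
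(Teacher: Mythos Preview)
Your proposal is correct and follows essentially the same approach as the paper: apply the Grothendieck--Lefschetz--Behrend trace formula (Theorem~\ref{thm:glb-trace}) to $\mathcal A_2$ using the cohomology from Theorem~\ref{thm:a2} and $\dim \mathcal A_2 = 3$. The paper's proof is terser, but your added remarks on the Frobenius/Tate-twist convention are a useful sanity check rather than a different method.
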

\begin{proof}
Using Theorem \ref{thm:a2}, apply the trace formula (Theorem \ref{thm:glb-trace}), noting that $\dim \mathcal A_2 = 3$ and that the values $\tr(\Phi_q \mid H^k(\mathcal A_2; \Q_\ell))$ are known. 
\end{proof}

The point counts in the rest of this section are new to the best of our knowledge. 
\begin{thm}\label{thm:n2-counts}
\begin{align*}
  \# \mathcal X_2(\F_q) &= q^5 + 2q^4 + 2q^3 + q^2 - 1,\\
  \# \mathcal X_2^2(\F_q) &= q^7 + 4q^{6} + 9q^{5} + 9q^{4} + 3q^{3} - 5q^{2} - 5q -3,\\
  \#\mathcal X_2^{\Sym(2)}(\F_q) &= q^7 + 3q^6 + 6q^5 + 6q^4 + 3q^3 - 2q^2 - 2q - 2.
\end{align*}
\end{thm}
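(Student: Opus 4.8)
The plan is to apply the Grothendieck--Lefschetz--Behrend trace formula (Theorem \ref{thm:glb-trace}) to each of the three stacks $\mathcal X_2$, $\mathcal X_2^2$, and $\mathcal X_2^{\Sym(2)}$, using the cohomology computations already established: Theorem \ref{thm:universal-surface} for $\mathcal X_2$, Theorem \ref{thm:2-power-universal-surface} for $\mathcal X_2^2$, and Theorem \ref{thm:sym-2} for $\mathcal X_2^{\Sym(2)}$. The only inputs needed beyond those theorems are the dimensions of the three stacks and the fact that the arithmetic Frobenius $\Phi_q$ acts on $H^k$ through the explicit Tate twists recorded in those theorems, so that $\tr(\Phi_q \mid \Q_\ell(-m)^{\oplus c}) = c\, q^m$.

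First I would record $\dim \mathcal X_2 = \dim \mathcal A_2 + 2 = 5$, $\dim \mathcal X_2^2 = \dim \mathcal A_2 + 4 = 7$, and $\dim \mathcal X_2^{\Sym(2)} = 7$ as well (the finite quotient map $\mathcal X_2^2 \to \mathcal X_2^{\Sym(2)}$ preserves dimension). Next, for each stack I would read off from the relevant cohomology theorem the list of pairs (degree $k$, Galois representation), noting that semi-simplification does not change traces, so Theorem \ref{thm:glb-trace} may be applied directly to the semi-simplified cohomology. For $\mathcal X_2$ this gives
\[
  \sum_{k} (-1)^k \tr(\Phi_q \mid H^k(\mathcal X_2;\Q_\ell)) = 1 + 2q - 0 + 2q^2 - q^5 + q^3,
\]
and multiplying by $q^{\dim \mathcal X_2} = q^5$ yields $q^5 + 2q^4 + 2q^3 + q^2 - 1$ after rearranging. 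The sign on the $H^5$ and $H^7$ terms is $(-1)^5 = -1$ and on $H^6$ is $(-1)^6 = +1$, which is exactly what produces the $-1$ constant term and the absence of a $q^4$-from-$H^7$ contribution; I would double-check these parities carefully.

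Then I would repeat this bookkeeping for $\mathcal X_2^2$ using Theorem \ref{thm:2-power-universal-surface}: the alternating sum of traces is $1 + 4q + 9q^2 + (-1)(3q^5 + q^4) + 9q^3 + (-1)(3q^5 + 4q^6) + 4q^4 + (-1)(3q^7 + q^6) + q^5$, and multiplying by $q^7$ gives $q^7 + 4q^6 + 9q^5 + 9q^4 + 3q^3 - 5q^2 - 5q - 3$; one sees the $-5q^2, -5q, -3$ arise from the odd-degree classes in degrees $5, 7, 9$. Finally for $\mathcal X_2^{\Sym(2)}$, using Theorem \ref{thm:sym-2}, the alternating sum is $1 + 3q + 6q^2 + (-1)(2q^5) + 6q^3 + (-1)(2q^6 + q^5) + 3q^4 + (-1)(2q^7) + q^5$, and multiplying by $q^7$ gives $q^7 + 3q^6 + 6q^5 + 6q^4 + 3q^3 - 2q^2 - 2q - 2$. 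The main obstacle — really the only place an error could creep in — is correctly tracking the signs $(-1)^k$ against the Tate twists in the odd-degree cohomology groups (degrees $5, 7, 9$ in the latter two cases), since these are precisely the terms that produce the negative low-order coefficients; everything else is a routine polynomial identity, so I would verify the parity-versus-twist pairing term by term and then simply collect coefficients.
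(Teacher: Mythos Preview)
Your approach is exactly the paper's: apply Theorem~\ref{thm:glb-trace} to the cohomology tables of Theorems~\ref{thm:universal-surface}, \ref{thm:2-power-universal-surface}, and~\ref{thm:sym-2}, noting the dimensions and that semi-simplification preserves traces. However, there is a sign slip in your Frobenius action: by the paper's convention, the \emph{arithmetic} Frobenius $\Phi_q$ acts on $\Q_\ell(n)$ by $q^{n}$, so $\tr(\Phi_q \mid \Q_\ell(-m)^{\oplus c}) = c\,q^{-m}$, not $c\,q^m$. With your stated convention, the alternating sum for $\mathcal X_2$ would be $1 + 2q + 2q^2 - q^5 + q^3$, and multiplying by $q^5$ gives $q^5 + 2q^6 + 2q^7 + q^8 - q^{10}$, not the claimed answer. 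The correct intermediate sum is $1 + 2q^{-1} + 2q^{-2} - q^{-5} + q^{-3}$, which after multiplying by $q^5$ does give $q^5 + 2q^4 + 2q^3 + q^2 - 1$; the same correction applies to the other two computations. Once this is fixed your argument is complete and identical to the paper's.
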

\begin{proof}
In all cases, the counts follow from the trace formula (Theorem \ref{thm:glb-trace}). Applying Theorem \ref{thm:universal-surface} and the fact that $\dim \mathcal X_2 = 5$,
\[
  \#\mathcal X_2(\F_q) = q^5(1 + 2q^{-1} + 2q^{-2} - q^{-5} + q^{-3}) = q^5 + 2q^4 + 2q^3 + q^2 - 1.
\]
Applying Theorem \ref{thm:2-power-universal-surface} and the fact that $\dim \mathcal X_2^2 = 7$,
\begin{align*}
\#\mathcal X_2^2(\F_q) &= q^7\left(1 + 4q^{-1} + 9q^{-2} - (3q^{-5} + q^{-4}) + 9q^{-3} \right) \\
&\quad +q^7 \left(- (3q^{-5} + 4q^{-6}) + 4q^{-4}-(3q^{-7} + q^{-6}) + q^{-5}\right) \\
&= q^7 + 4q^{6} + 9q^{5} + 9q^{4} + 3q^{3} - 5q^{2} - 5q -3.
\end{align*}
Applying Theorem \ref{thm:sym-2} and the fact that $\dim \mathcal X_2^{\Sym(2)} = 7$,
\begin{align*}
\#\mathcal X_2^{\Sym(2)}(\F_q) &= q^7 \left(1 + 3q^{-1} + 6q^{-2} - 2q^{-5} + 6q^{-3} \right) \\
&\quad+ q^7\left(- (q^{-5} + 2q^{-6}) + 3q^{-4} - 2q^{-7} + q^{-5}\right) \\
&= q^7 + 3q^6 + 6q^5 + 6q^4 + 3q^3 - 2q^2 - 2q - 2 \qedhere
\end{align*}
\end{proof}

We can also piece together the partial information we have about $H^k(\mathcal X_2^n; \Q_\ell)$ and $H^k(\mathcal X_2^{\Sym(n)};\Q_\ell)$ to give an approximation of $\#\mathcal X_2^n(\F_q)$ and $\#\mathcal X_2^{\Sym(n)}(\F_q)$, for fixed $n$ and asymptotic in $q$. 
\begin{thm}
For all $n \geq 1$,
\[
  \# \mathcal X_2^n(\F_q) = q^{3 + 2n} + \left(\binom{n+1}{2}+1\right)q^{2 + 2n} + \left(\frac{n(n+1)(n^2+n+2)}{8} + \binom{n+1}{2}\right)q^{1 + 2n} + O(q^{2n}).
\]
For $n = 3$,
\[
  \# \mathcal X_2^{\Sym(3)}(\F_q) = q^9+ 3q^8 + O(q^7)
\]
and for all $n \geq 4$, 
\[
  \# \mathcal X_2^{\Sym(n)}(\F_q) = q^{3+2n} + 3q^{2 + 2n} + 9q^{1 + 2n} + O(q^{2n}).
\]
\end{thm}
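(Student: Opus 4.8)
The plan is to feed the cohomology computations of Theorems~\ref{thm:low-degree-computation} and~\ref{thm:low-degree-sym-n} into the Grothendieck--Lefschetz--Behrend trace formula (Theorem~\ref{thm:glb-trace}), and to use Deligne's weight bound to dispose of the still-unknown cohomology in high degrees. Both $\mathcal X_2^n$ and $\mathcal X_2^{\Sym(n)}$ have constant dimension $3+2n$: the base $\mathcal A_2$ has dimension $3$, the fibers $A^n$ and $\Sym^n A$ have dimension $2n$, and the latter space is a finite quotient of the former. So Theorem~\ref{thm:glb-trace} gives
\[
  \#\mathcal X_2^n(\F_q) = q^{3+2n}\sum_{k\geq 0}(-1)^k \tr\!\left(\Phi_q \mid H^k(\mathcal X_2^n;\Q_\ell)\right),
\]
and the sum is finite because $\mathcal X_2^n$ is a finite quotient of a smooth variety of dimension $3+2n$, so $H^k(\mathcal X_2^n;\Q_\ell)=0$ for $k>6+4n$.

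\textbf{Low degrees.} For $k=0,1,2,3,4$, Theorem~\ref{thm:low-degree-computation} identifies $H^k(\mathcal X_2^n;\Q_\ell)$ as a sum of the Tate twists $\Q_\ell(0)$, $\Q_\ell(-1)$, $\Q_\ell(-2)$ (with $H^1=H^3=0$), on which $\Phi_q$ acts by $1$, $q^{-1}$, $q^{-2}$ respectively; multiplying the corresponding traces by $q^{3+2n}$ produces exactly
\[
  q^{3+2n} + \left(\binom{n+1}{2}+1\right)q^{2+2n} + \left(\frac{n(n+1)(n^2+n+2)}{8}+\binom{n+1}{2}\right)q^{1+2n}.
\]
The degree $k=5$ term, again by Theorem~\ref{thm:low-degree-computation}, contributes $-\binom{n+1}{2}q^{2n-2}-\binom n2 q^{2n-1}=O(q^{2n})$, which is absorbed into the error.

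\textbf{High degrees.} It remains to show that $q^{3+2n}\tr(\Phi_q\mid H^k(\mathcal X_2^n;\Q_\ell))=O(q^{2n})$ for each $k\geq 6$. By the degenerate Leray spectral sequence of Proposition~\ref{prop:leray} together with Lemma~\ref{lem:X2-n-local-systems}, $H^k(\mathcal X_2^n;\Q_\ell)$ is a direct sum of pieces $H^p(\mathcal A_2;\mathbf V_{a,b})\left(\tfrac{a+b-q}{2}\right)$ with $p+q=k$ and $a+b\leq q$. Deligne's weight bound (Theorem~\ref{thm:weights}) gives that $H^p(\mathcal A_2;\mathbf V_{a,b})$ has weights $\geq p+a+b$; the Tate twist raises weights by $q-a-b$, so every such piece, and hence $H^k(\mathcal X_2^n;\Q_\ell)$, has weights $\geq p+q=k$. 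Therefore the eigenvalues of $\Phi_q$ on $H^k$ have absolute value $\leq q^{-k/2}$, and since $\dim H^k$ is independent of $q$ we obtain $q^{3+2n}\lvert\tr(\Phi_q\mid H^k)\rvert \ll_n q^{3+2n-k/2}\leq q^{2n}$ for $k\geq 6$; summing over the finitely many remaining $k$ gives the claimed $O(q^{2n})$. For $\mathcal X_2^{\Sym(n)}$ the argument is identical: its dimension is also $3+2n$, Theorem~\ref{thm:low-degree-sym-n} supplies degrees $\leq 5$ once $n\geq 4$, and $H^k(\mathcal X_2^{\Sym(n)};\Q_\ell)\cong H^k(\mathcal X_2^n;\Q_\ell)^{S_n}$ inherits the bound ``weights $\geq k$'' as a subrepresentation.

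\textbf{Main obstacle.} The delicate point is the high-degree estimate: one must verify that Deligne's bound over $\mathcal A_2$ (Theorem~\ref{thm:weights}) propagates correctly through the Leray spectral sequence and the Tate twists appearing in $H^q(A^n)$, and one must allow the implied constants in $O(q^{2n})$ to depend on $n$. This dependence is genuinely necessary, since for instance $\dim H^6(\mathcal X_2^n;\Q_\ell)$ grows with $n$ and $H^6$ contributes at the exact order $q^{2n}$; the same observation shows the error term cannot be sharpened without computing $H^6(\mathcal X_2^n;\Q_\ell)$, which for general $n$ brings in the Galois representations of Siegel modular forms mentioned in the introduction.
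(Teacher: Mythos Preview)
Your proof is correct and follows essentially the same route as the paper: apply the trace formula of Theorem~\ref{thm:glb-trace}, read off degrees $k\leq 5$ from Theorems~\ref{thm:low-degree-computation} and~\ref{thm:low-degree-sym-n}, and bound the remaining degrees by pushing Deligne's weight bound (Theorem~\ref{thm:weights}) through the Leray decomposition of Proposition~\ref{prop:leray} and Lemma~\ref{lem:X2-n-local-systems}. The paper phrases the last step as a uniform estimate $\#\mathcal X(\F_q)=q^{3+2n}\sum_{k\le N}(-1)^k\tr(\Phi_q\mid H^k)+O(q^{3+2n-(N+1)/2})$ and then sets $N=5$, which is exactly your weight-$\geq k$ argument rewritten; your added remark that the $O(q^{2n})$ cannot be improved without knowing $H^6$ is a nice complement not in the paper.
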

\begin{proof}
By Theorem \ref{thm:weights}, for all $p \geq 0$ and $a \geq b \geq 0$,
\[
  \lvert\tr(\Phi_q|H^p(\mathcal A_2; \mathbf V_{a, b}))\rvert \leq \dim H^p(\mathcal A_2; \mathbf V_{a,b}) q^{-\frac{p+a+b}{2}}
\]
and so for any $N \geq 0$ such that $N - p \equiv a+b \pmod 2$,
\[
  \left\lvert\tr\left(\Phi_q \biggr| H^p(\mathcal A_2; \mathbf V_{a,b})\left(\frac{a + b - (N-p)}{2}\right)\right)\right\rvert \leq \dim H^p(\mathcal A_2; \mathbf V_{a,b}) q^{-N/2}.
\]
For any $N \geq 0$ and $\mathcal X = \mathcal X_2^n$ or $\mathcal X_2^{\Sym(n)}$, this estimate, the trace formula (Theorem \ref{thm:glb-trace}), the properties of the Leray spectral sequence for $\mathcal X \to \mathcal A_2$ (Proposition \ref{prop:leray}), and Proposition \ref{prop:mult-even-odd} imply
\[
  \#\mathcal X(\F_q) = q^{3+2n}\left(\sum_{0 \leq k \leq N} (-1)^k\tr\left(\Phi_q | H^k(\mathcal X; \Q_\ell)\right)\right) + O\left(q^{3 + 2n-\frac{N+1}{2}}\right).
\]
For $\mathcal X = \mathcal X_2^n$, applying Theorem \ref{thm:low-degree-computation} with $N = 5$ gives
\[
  \#\mathcal X_2^n(\F_q) = q^{3 + 2n} + \left(\binom{n+1}{2}+1\right)q^{2 + 2n} + \left(\frac{n(n+1)(n^2+n+2)}{8} + \binom{n+1}{2}\right)q^{1 + 2n} + O(q^{2n})
\]
and for $\mathcal X = \mathcal X_2^{\Sym(n)}$, the same computation using Theorem \ref{thm:low-degree-sym-n} with $n = N = 3$ gives
\[
  \# \mathcal X_2^{\Sym(3)}(\F_q) = q^9+ 3q^8 + O(q^7)
\]
and with $n \geq 4$, $N = 5$ gives
\[
  \#\mathcal X_2^{\Sym(n)}(\F_q) = q^{3+2n} + 3q^{2 + 2n} + 9q^{1 + 2n} + O(q^{2n}). 
\]
Finally, we note that it is possible to compute the exact value of $\# \mathcal X_2^{\Sym(3)}$ analogously to the calculation of $\# \mathcal X_2^{\Sym(2)}$ in Theorem \ref{thm:n2-counts} but we omit it for brevity.
\end{proof}

These point counts imply the arithmetic statistics results outlined in Section \ref{sec:introduction} which we discuss for the remainder of this subsection.
\begin{lem}\label{lem:prob-defns}
Define a probability measure $\mathbf P$ on $[\mathcal A_2(\F_q)]$ by 
\[
  \mathbf P([A_0]) = \frac{1}{\#\mathcal A_2(\F_q)\#\Aut_{\F_q}(A_0)}
\]
for each $\F_q$-isomorphism class $[A_0]\in[\mathcal A_2(\F_q)]$. For fixed $n \geq 1$, the expected value of the number of $\F_q$-points on $n$th powers of abelian surfaces with respect to this probability measure is
\[
  \mathbf E[\#A^n(\F_q)] = \frac{\sum_{(A_0, p) \in [\mathcal X_2^n(\F_q)]} \frac{1}{\#\Aut_{\F_q}(A_0, p)}}{\sum_{A_0 \in [\mathcal A_2(\F_q)]} \frac{1}{\#\Aut_{\F_q}(A_0)}} = \frac{\#\mathcal X_2^n(\F_q)}{\#\mathcal A_2(\F_q)}.
\]
Similarly, the expected value of the groupoid cardinality of $\Sym^n A(\F_q)$ is
\[
  \mathbf E[\#\Sym^n A(\F_q)] = \frac{\sum_{(\Sym^n A_0, p) \in [\mathcal X_2^{\Sym(n)}(\F_q)]} \frac{1}{\#\Aut_{\F_q}(\Sym^n A_0, p)} }{\sum_{A_0 \in [\mathcal A_2(\F_q)]} \frac{1}{\#\Aut_{\F_q}(A_0)}} = \frac{\#\mathcal X_2^{\Sym(n)}(\F_q)}{\#\mathcal A_2(\F_q)}.
\]
\end{lem}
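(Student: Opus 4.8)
The plan is to reduce everything to the orbit--stabilizer theorem applied fiberwise over $[\mathcal A_2(\F_q)]$. First I would check that $\mathbf P$ is genuinely a probability measure: summing $\mathbf P([A_0])$ over all $[A_0] \in [\mathcal A_2(\F_q)]$ gives $\frac{1}{\#\mathcal A_2(\F_q)}\sum_{[A_0]} \frac{1}{\#\Aut_{\F_q}(A_0)}$, and the inner sum is exactly the groupoid cardinality $\#\mathcal A_2(\F_q)$ by definition, so the total is $1$. Since $[\mathcal A_2(\F_q)]$ is a finite set and each $\Aut_{\F_q}(A_0)$ is a finite group (as $A_0$ is a polarized abelian surface over a field), there are no convergence issues, and the expected value $\mathbf E[\#A^n(\F_q)] = \sum_{[A_0]} \mathbf P([A_0])\,\#A_0^n(\F_q)$ is a finite sum.

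Next I would analyze the forgetful map $[\mathcal X_2^n(\F_q)] \to [\mathcal A_2(\F_q)]$ sending $(A, p) \mapsto [A]$. Fixing a representative $A_0$ of an isomorphism class, I claim the fiber over $[A_0]$ is in bijection with the orbit set $\Aut_{\F_q}(A_0)\backslash A_0^n(\F_q)$: any pair $(A, p)$ with $A \cong_{\F_q} A_0$ is isomorphic in $\mathcal X_2^n(\F_q)$ to some $(A_0, p')$ via an $\F_q$-isomorphism $A \xrightarrow{\sim} A_0$ acting coordinatewise on the $n$-fold product, and two such pairs $(A_0, p')$, $(A_0, p'')$ are isomorphic precisely when $p'$ and $p''$ lie in the same $\Aut_{\F_q}(A_0)$-orbit on $A_0^n(\F_q)$. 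Moreover, the automorphism group of $(A_0, p)$ as an object of $\mathcal X_2^n(\F_q)$ is $\Stab_{\Aut_{\F_q}(A_0)}(p)$. Then by orbit--stabilizer, $\#O \cdot \#\Stab_{\Aut_{\F_q}(A_0)}(p) = \#\Aut_{\F_q}(A_0)$ for $p$ in the orbit $O$, so summing $1/\#\Aut_{\F_q}(A_0, p) = 1/\#\Stab_{\Aut_{\F_q}(A_0)}(p)$ over a set of orbit representatives --- that is, over the objects of $[\mathcal X_2^n(\F_q)]$ in the fiber over $[A_0]$ --- yields
\[
  \sum_{O \in \Aut_{\F_q}(A_0)\backslash A_0^n(\F_q)} \frac{\#O}{\#\Aut_{\F_q}(A_0)} = \frac{\#A_0^n(\F_q)}{\#\Aut_{\F_q}(A_0)}.
\]

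Summing this identity over all $[A_0] \in [\mathcal A_2(\F_q)]$ computes $\#\mathcal X_2^n(\F_q)$ on the left, and on the right gives $\#\mathcal A_2(\F_q)\sum_{[A_0]}\mathbf P([A_0])\,\#A_0^n(\F_q) = \#\mathcal A_2(\F_q)\,\mathbf E[\#A^n(\F_q)]$; rearranging yields the first claimed formula. The argument for $\mathcal X_2^{\Sym(n)}$ is identical once one replaces $A_0^n(\F_q)$ by the set of $\F_q$-points of the projective variety $\Sym^n A_0 = A_0^n/S_n$: the fiber of $[\mathcal X_2^{\Sym(n)}(\F_q)] \to [\mathcal A_2(\F_q)]$ over $[A_0]$ is $\Aut_{\F_q}(A_0)\backslash (\Sym^n A_0)(\F_q)$, the automorphism group of $(A_0, \bar p)$ is the stabilizer of $\bar p$ in $\Aut_{\F_q}(A_0)$ acting on the quotient, and orbit--stabilizer again gives $\sum \frac{1}{\#\Aut_{\F_q}(A_0, \bar p)} = \#\Sym^n A_0(\F_q)/\#\Aut_{\F_q}(A_0)$ over each fiber. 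The main point requiring care --- rather than a serious obstacle --- is the correct identification of the automorphism groups of objects of these quotient stacks over $\F_q$ together with the bijection between $\F_q$-isomorphism classes in a fiber and $\Aut_{\F_q}(A_0)$-orbits; in particular, in the symmetric case an $\F_q$-point of $\Sym^n A_0$ need not lift to an $\F_q$-point of $A_0^n$ (as noted in the Remark following the moduli interpretations), but since we work directly with $\F_q$-points of the quotient variety this causes no difficulty. Everything else is formal manipulation of finite sums.
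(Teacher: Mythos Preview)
Your treatment of $\mathcal X_2^n$ is correct and is exactly the paper's argument: identify the fiber over $[A_0]$ with $\Aut_{\F_q}(A_0)\backslash A_0^n(\F_q)$, note $\Aut_{\F_q}(A_0,p)=\Stab_{\Aut_{\F_q}(A_0)}(p)$, and apply orbit--stabilizer.

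The symmetric case, however, has a genuine gap. In this paper $\mathcal X_2^{\Sym(n)}$ is the \emph{stack} quotient $[\mathcal X_2^n/S_n]$, not the coarse variety, and the $S_n$-action on each $A_0^n$ is not free. Consequently an object $(A_0,p_0)$ of $\mathcal X_2^{\Sym(n)}(\F_q)$ has automorphism group $\Stab_{\Aut_{\F_q}(A_0)}(p_0)\times\Stab_{S_n}(p_0)$, not merely the $\Aut_{\F_q}(A_0)$-stabilizer of a point in the quotient variety. This is precisely why the lemma speaks of the \emph{groupoid cardinality} of $\Sym^n A(\F_q)$: writing $N(p_0)=\#\Stab_{S_n}(p_0)$, one has $\#\Sym^n A_0(\F_q)=\sum_{p_0}1/N(p_0)$, and the fiberwise orbit--stabilizer identity must carry this extra factor to match $\#\mathcal X_2^{\Sym(n)}(\F_q)$ as computed by the Behrend trace formula. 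Your computation, treating $\Sym^n A_0$ as a projective variety with ordinary point count, instead yields $\sum_{[A_0]}\#(\Sym^n A_0)(\F_q)_{\mathrm{variety}}/\#\Aut_{\F_q}(A_0)$, which is strictly larger than $\#\mathcal X_2^{\Sym(n)}(\F_q)$ whenever any fiber contains a point with nontrivial $S_n$-stabilizer (e.g.\ the diagonal $(p,\dots,p)$ contributes $1/n!$ to the stacky count but $1$ to yours). The fix is exactly what the paper does: record the $S_n$-stabilizer in $\Aut_{\F_q}(A_0,p_0)$ and interpret $\#\Sym^n A_0(\F_q)$ as a groupoid cardinality before running orbit--stabilizer.
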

\begin{proof}
Consider a representative abelian surface $A_0$ in a fixed $\F_q$-isomorphism class $[A_0]$ and let $Z_0 = A_0^n$ or $\Sym^n A_0$. There is an action of $\Aut_{\F_q}(A_0)$ on $Z_0(\F_q)$. For any $p_0 \in Z_0(\F_q)$, its $\F_q$-isomorphism class is precisely its orbit under the action of $\Aut_{\F_q}(A_0)$. Let $\Stab_G(p_0)$ denote the stabilizer of $p_0$ in the group $G$. Then the automorphism group of the pair $(A_0^n, p_0)$ is $\Stab_{\Aut_{\F_q}(A_0)}(p_0)$ and the automorphism group of $(\Sym^n A_0, p_0)$ is $\Stab_{\Aut_{\F_q}(A_0)}(p_0) \times \Stab_{S_n}(p_0)$; this is a direct product because the action of $S_n$ and $\Aut_{\F_q}(A_0)$ on $A_0^n$ commute. 

Let $N(p_0) = \#\Stab_{S_n}(p_0)$ if $Z_0 = \Sym^n A_0$ and $N(p_0) = 1$ if $Z_0 = A_0^n$. Let $\Orb(p_0)$ denote the orbit of $p_0$ in $Z_0(\F_q)$ under the action of $\Aut_{\F_q}(A_0)$. The contribution of $A_0$ (and its corresponding fiber $Z_0$) to the expected value is
\[
  \frac{1}{\#\mathcal A_2(\F_q)}\sum_{p_0 \in [Z_0(\F_q)]} \frac{1}{\#\Aut_{\F_q}(Z_0, p_0)} = \frac{1}{\#\mathcal A_2(\F_q)}\sum_{p_0 \in [Z_0(\F_q)]} \frac{\#\Orb(p_0)}{N(p_0)\#\Aut_{\F_q}(A_0)} = \frac{\#Z_0(\F_q)}{\#\mathcal A_2(\F_q)\#\Aut_{\F_q}(A_0)}
\]
where the first equality follows from the orbit-stabilizer theorem and the second follows from the fact that the groupoid cardinality of $\Sym^n A_0(\F_q)$ is $\sum_{p_0 \in \Sym^nA_0(\F_q)} \frac{1}{N(p_0)}$.
\end{proof}

Lemma \ref{lem:prob-defns} and the results of this section immediately imply the statistics given in Section \ref{sec:introduction}; we restate them here for convenience.
\begin{cor:avg-fq-points}
The expected number of $\F_q$-points on abelian surfaces defined over $\F_q$ is
\[
  \mathbf E[\#A(\F_q)] = \frac{\#\mathcal X_2(\F_q)}{\# \mathcal A_2(\F_q)} = q^2 + q + 1 - \frac{1}{q^3 + q^2}. 
\]
\end{cor:avg-fq-points}

Because $\#A^n(\F_q) = \#A(\F_q)^n$ for all abelian surfaces $A$, the following corollary gives asymptotics for all moments of $\#A(\F_q)$ as well as the exact second moment.
\begin{cor:nth-fiber-asymptotic-in-q}
The expected value of $\#A^2(\F_q)$ is
\[
  \mathbf E[\#A^2(\F_q)] = \frac{\# \mathcal X_2^2(\F_q)}{\# \mathcal A_2(\F_q)}= q^4 + 3q^3 + 6q^2 + 3q - \frac{5q^2 + 5q + 3}{q^3 + q^2}
\]
and for all $n \geq 1$, 
\[
  \mathbf E[\# A^n(\F_q)] = \frac{\# \mathcal X_2^n(\F_q)}{\# \mathcal A_2(\F_q)}= q^{2n} + \binom{n+1}{2} q^{2n-1} + \left(\frac{n(n+1)(n^2+n+2)}{8}\right) q^{2n-2} + O(q^{2n-3}).
\]
\end{cor:nth-fiber-asymptotic-in-q}
Recall that $\Sym^n A(\F_q)$ for an abelian surface $A$ and any $n \geq 1$ is the set of $n$-tuples defined over $\F_q$ as tuples, i.e. the $n$ points are permuted by $\Frob_q$.
\begin{cor:sym-asymptotic-in-q}
The expected value of $\#\Sym^n A(\F_q)$ for $n = 2$ is 
\[
  \mathbf E[\#\Sym^2 A(\F_q)] = \frac{\#\mathcal X_2^{\Sym(2)}(\F_q)}{\#\mathcal A_2(\F_q)}= q^4 + 2q^3 + 4q^2 + 2q + 1 - \frac{3q^2 + 2q + 2}{q^3 + q^2}.
\]
For $n = 3$,
\[
  \mathbf E[\#\Sym^3 A(\F_q)] = \frac{\#\mathcal X_2^{\Sym(3)}(\F_q)}{\#\mathcal A_2(\F_q)} = q^6 + 2q^5 + O(q^4)
\]
and for all $n \geq 4$,
\[
  \mathbf E[\#\Sym^nA(\F_q)] =\frac{\#\mathcal X_2^{\Sym(n)}(\F_q)}{\#\mathcal A_2(\F_q)}= q^{2n} + 2q^{2n-1} + 7q^{2n-2} + O(q^{2n-3}).
\]
\end{cor:sym-asymptotic-in-q}

The fact that we have determined the exact value for the second moment means we can calculate the variance of $\#A(\F_q)$ using Corollary \ref{cor:avg-fq-points}.
\begin{cor:variance}
The variance of $\#A(\F_q)$ is
\[
  \Var(\#A(\F_q)) = \mathbf E[\#A^2(\F_q)] - (\mathbf E[\#A(\F_q)])^2 = q^3 + 3q^2 + q - 1 - \frac{3q^2 + 3q + 1}{q^3+ q^2} - \frac{1}{(q^3 + q^2)^2}.
\]
\end{cor:variance}

\subsection{Level structures.}\label{sec:level-structures}
Let $N \geq 2$ and let $\pi_N: \mathcal X_2[N] \to \mathcal A_2[N]$ be the projection map. For each local system $\mathbf V_{a,b}$ on $\mathcal A_2$, we can define local systems on $\mathcal A_2[N]$ (also denoted $\mathbf V_{a,b}$) via pullback by the map $pr: \mathcal A_2[N] \to \mathcal A_2$. By \cite[\href{https://stacks.math.columbia.edu/tag/075H}{Lemma 075H}]{stacks-project}, there is an isomorphism of sheaves
\[
  pr^*(R^q\pi_*\Q_\ell) \cong R^q (\pi_N)_*(pr^*\Q_\ell) \cong R^q(\pi_N)_* \Q_\ell.
\]
Therefore, the decomposition given in Lemma \ref{lem:X2-local-systems} also holds for the local systems $H^k(A; \Q_\ell)$ on $\mathcal A_2[N]$, interpreting all local systems $\mathbf V_{a,b}$ as their pullbacks to $\mathcal A_2[N]$.

However, the cohomology of local systems on $\mathcal A_2[N]$ is not yet known in general. In the case $N = 2$, many parts of the Euler characteristics of local systems $\mathbf V_{a,b}$ on $\mathcal A_2[2]$ are known and there are conjectures for the rest; this is done in \cite{bergstrom-faber-vandergeer}. Here, we consider the compactly supported Euler characteristics of such local systems, defined
\[
  e_c(\mathcal A_2[2]; \mathbf V_{a, b}) := \sum_{k \geq 0} (-1)^k [H^k_c(\mathcal A_2[2]; \mathbf V_{a, b})]
\]
taken in the Grothendieck group of an appropriate category, e.g. the category of mixed Hodge structures or of Galois representations.

\begin{conj}[Bergstr\"om--Faber--van der Geer, {\cite[Section 10]{bergstrom-faber-vandergeer}}]
The compactly supported Euler characteristic of $\mathbf V_{1,1}$ over $\mathcal A_2[2]$ is given by $5\Q_\ell(-3) - 10 \Q_\ell(-2)$. The compactly supported Euler characteristic of $\mathbf V_{0,0}$ over $\mathcal A_2[2]$ is given by $\Q_\ell(-3) + \Q_\ell(-2) - 14 \Q_\ell(-1) + 16\Q_\ell$.
\end{conj}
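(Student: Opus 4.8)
Since the final statement is a conjecture of Bergstr\"om--Faber--van der Geer, I describe the strategy by which one would establish it using the tools of this paper together with their work, rather than giving a complete argument. The plan is to reduce both identities to additivity of the compactly supported Euler characteristic $e_c$ along a natural stratification of $\mathcal A_2[2]$, to the known cohomology of the strata, and to a control of Galois weights via Poincar\'e duality and Deligne's bounds (the analogues of Theorems \ref{thm:poincare-duality} and \ref{thm:weights} over $\mathcal A_2[2]$).

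\textbf{The case $\mathbf V_{0,0}$.} Here $\mathbf V_{0,0}=\Q_\ell$ is the constant sheaf, so $e_c(\mathcal A_2[2];\Q_\ell)$ is just the class of $\mathcal A_2[2]$ in the Grothendieck group; the rational cohomology of $\mathcal A_2[2]$ (whose Satake compactification is the Igusa quartic) is classically known and of Tate type, giving $\Q_\ell(-3)+\Q_\ell(-2)-14\Q_\ell(-1)+16\Q_\ell$ directly. Alternatively, stratify $\mathcal A_2[2]$ into the open Jacobian locus --- which, via the six ordered Weierstrass points of a genus-$2$ curve and the identification $\Sp(4,\F_2)\cong S_6$, is the moduli space $\mathcal M_{0,6}$ of six ordered distinct points on $\mathbb P^1$ --- and the decomposable locus, a union of finitely many quotients of $Y(2)\times Y(2)$ (with $Y(2)=\mathbb P^1\setminus\{0,1,\infty\}$) by finite groups indexed by the ways a symplectic level-$2$ structure is compatible with a product splitting. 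Both pieces are mixed Tate, so additivity of $e_c$ together with the combinatorics of the boundary strata gives the answer, with Tate twists fixed by purity.

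\textbf{The case $\mathbf V_{1,1}$.} Use the same stratification. On the decomposable stratum $\mathbf V_{1,0}$ restricts to a twist of $H^1(E_1)\oplus H^1(E_2)$, so $\mathbf V_{1,1}\subseteq\wedge^2\mathbf V_{1,0}$ restricts to a direct sum of Tate twists of local systems $\Sym^a H^1(E_1)\boxtimes\Sym^b H^1(E_2)$ with $a,b\le 2$ on products of copies of $Y(2)$; their compactly supported cohomology is given by Eichler--Shimura, and since $S_k(\Gamma(2))=0$ for $k\le 4$ these contributions are mixed Tate and explicitly computable. On the Jacobian stratum, $\mathbf V_{1,1}$ restricts to the local system attached to $\wedge^2$ of the symplectic representation of the genus-$2$ hyperelliptic mapping class group; one computes its $e_c$ either by the point count $\sum_{[C]\in\mathcal M_{0,6}(\F_q)}\tr(\Frob_q\mid \wedge^2 H^1(\mathrm{Jac}\,C))/\#\Aut(C)$ for enough prime powers $q$ --- the summand being an explicit polynomial in the Frobenius eigenvalues on $H^1$ of a genus-$2$ curve with ordered branch points --- or by writing $e_c(\mathcal A_2[2];\mathbf V_{1,1})=\bigoplus_{\mathbf W}(\dim\mathbf W)\,e_c(\mathcal A_2;\mathbf V_{1,1}\otimes\mathbf W)$ over the $\Sp(4,\Z/2\Z)\cong S_6$-local systems $\mathbf W$ and extending Petersen's methods to these twisted local systems. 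Summing the strata via additivity of $e_c$ and normalizing via Poincar\'e duality then yields $5\Q_\ell(-3)-10\Q_\ell(-2)$.

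\textbf{The main obstacle.} The delicate point is not the bookkeeping but showing that the Jacobian-stratum contribution to $e_c(\mathcal A_2[2];\mathbf V_{1,1})$ is genuinely mixed Tate of weights $4$ and $6$ --- equivalently, that no elliptic or Siegel modular Galois representations survive --- so that a point count over finitely many $\F_q$ actually pins down the motivic answer. This vanishing of the ``interesting'' part is precisely the content of the Bergstr\"om--Faber--van der Geer conjecture; a rigorous proof would require either their full numerical-plus-purity apparatus (matching naive point counts for all small $q$ against the predicted polynomial, together with independence of $\ell$ and a description of the toroidal boundary of $\mathcal A_2[2]$ \`a la \cite{faltings--chai} to bound weights and rule out cuspidal contributions) or an Eisenstein/endoscopic analysis at level $2$ in the spirit of Petersen's proof of Theorem \ref{thm:petersen}. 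I expect this weight-control step to be the crux, with everything else reducing to the level-$2$ boundary combinatorics and to Eichler--Shimura.
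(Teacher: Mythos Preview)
The paper does not prove this statement: it is recorded as a conjecture of Bergstr\"om--Faber--van der Geer and then simply \emph{assumed} in the subsequent paragraph to compute $e_c(\mathcal X_2[2];\Q_\ell)$ and the point counts $\#\mathcal A_2[2](\F_q)$ and $\#\mathcal X_2[2](\F_q)$. There is therefore no ``paper's own proof'' to compare your proposal against.

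Your write-up correctly recognizes this and offers a strategy rather than a proof. The strategy you sketch---stratifying $\mathcal A_2[2]$ into the Jacobian locus (identified with $\mathcal M_{0,6}$ via ordered Weierstrass points and $\Sp(4,\F_2)\cong S_6$) and the decomposable locus, reducing the latter to Eichler--Shimura on $Y(2)\times Y(2)$, and handling the former either by point counts or by decomposing along $S_6$-isotypic pieces and invoking Petersen-style results---is exactly the circle of ideas used in \cite{bergstrom-faber-vandergeer}. You also correctly isolate the obstacle: showing that the Jacobian-stratum contribution is mixed Tate, i.e.\ that no genuine cuspidal Galois representations appear at this weight and level. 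That is indeed the content of the conjecture, and it is not settled by the methods available in this paper.

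In short: your proposal is a reasonable outline of how one might attack the conjecture, but the paper itself makes no such attempt and treats the statement as input rather than output.
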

The cohomology of $\mathcal A_2[2]$ is also computed in \cite[Theorem 5.2.1]{lee--weintraub}. Assuming these two calculations and using the Leray spectral sequence of $\mathcal X_2[2] \to \mathcal A_2[2]$,
\begin{align*}
  e_c(\mathcal X_2[2]; \Q_\ell) &= \sum_{k \geq 0}(-1)^k H^k_c(\mathcal X_2[2]; \Q_\ell) = \sum_{p, q \geq 0} (-1)^{p+q} H_c^p(\mathcal A_2[2]; H^q(A; \Q_\ell))\\
  &= e_c(\mathcal A_2[2]; \mathbf V_{0,0}) + e_c(\mathcal A_2[2]; \mathbf V_{0,0}(-1)) + e_c(\mathcal A_2[2]; \mathbf V_{0,0}(-2)) + e_c(\mathcal A_2[2]; \mathbf V_{1,1}) \\
  &= \Q_\ell(-5) + 2\Q_\ell(-4) - 7 \Q_\ell(-3) - 7 \Q_\ell(-2) + 2\Q_\ell(-1) + 16 \Q_\ell.
\end{align*}
These computations plus a version of the trace formula (Theorem \ref{thm:glb-trace}) for compactly supported cohomology imply that
\begin{align*}
\#\mathcal A_2[2](\F_q) &= q^3 + q^2 - 14 q + 16, \\
\#\mathcal X_2[2](\F_q) &= q^5 + 2q^4 - 7q^3 - 7q^2 + 2q + 16.
\end{align*}
In particular, note that an $\F_q$-point on $\mathcal A_2[N]$ corresponds to an abelian surface $A$ defined over $\F_q$ with an ordered basis of its $N$-torsion defined over $\F_q$. Therefore, careful analysis of the counts $\#\mathcal A_2[2](\F_q)$ and $\#\mathcal X_2[2](\F_q)$ will yield the average number of abelian surfaces over $\F_q$ with $2$-torsion defined over $\F_q$ and the average number of $\F_q$-points on such abelian surfaces. 

\appendix

\section{Tensor Products of Irreducible $\Sp(4)$-Representations}\label{sec:tensors}

In this appendix we summarize the combinatorial results decomposing tensor products of irreducible $\Sp(4)$-representations into irreducible ones and prove Lemmas \ref{lem:rep-thy-1,0} and \ref{lem:rep-thy-1,1}. As usual, we let $W_{a,b}$ denote the irreducible $\Sp(4)$-representation corresponding to the partition $a \geq b \geq 0$.

Let $\lambda=(\lambda_1, \dots, \lambda_n)$ be a partition, so that $\lambda_1 \geq \dots \geq \lambda_n \geq 0$. A \emph{Young diagram of shape $\lambda$} is an arrangement of left-justified rows of boxes, such that row $i$ has $\lambda_i$-many boxes. A \emph{skew shape $\lambda/\mu$}, where $\lambda$ and $\mu$ are both partitions with $\mu\subseteq \lambda$ is the arrangement of rows of boxes given by the Young diagram of shape $\lambda$, with the Young diagram of shape $\mu$ erased. A \emph{skew tableau of shape $\lambda/\mu$ with content $\beta = (\beta_1, \dots, \beta_k)$} is a labeling of a skew shape $\lambda/\mu$ where $\beta_i$-many of the boxes are labeled with the number $i$. Such a tableau is called \emph{semi-standard} if the labels are nondecreasing along rows and increasing along columns. Given a skew semi-standard tableau of shape $\lambda/\mu$ with content $\beta$, we may demand that the concatenation of the reversed rows is a \emph{lattice word}: list the entries of the tableau from right to left, starting from the top row and working down; for any $t$ smaller than the length of this list, the first $t$ elements must contain as many entries $i$ as it contains entries $i+1$. For example, consider the tableau on the left in Figure \ref{fig:LR-example}; the concatenation of the reversed rows is ``1121.'' For $t = 3$, the list of the first $3$ entries is ``112,'' and there are more entries labeled ``1'' than there are ``2'' in ``112'' (and of course, more entries labeled ``2'' than there are ``3,'' and so on). This is true for all $t \leq 4$ for this concatenation of the reversed rows for this tableau.

Skew semi-standard tableaux whose concatenation of the reversed rows is a lattice word are called \emph{Littlewood--Richardson tableaux}. For another description of these tableaux, see \cite[p. 456]{fulton--harris}. As an example, we give all Littlewood--Richardson tableaux of shape $(4, 2, 1)/(2, 1)$ and content $(3,1)$ in Figure \ref{fig:LR-example}. Finally, the \emph{Littlewood--Richardson coefficient $c_{\alpha\beta}^\gamma$} is the number of Littlewood--Richardson tableaux of shape $\gamma/\alpha$ and content $\beta$. One important property about Littlewood--Richardson coefficients is that $c_{\alpha\beta}^\gamma = c_{\beta\alpha}^\gamma$ for all partitions $\alpha, \beta, \gamma$. One can see this by applying the Littlewood--Richardson rule (\cite[(15.23), (A.8)]{fulton--harris}, \cite[Theorem 1.4.4]{koike--terada}) which says that $c_{\alpha\beta}^\gamma$ is the multiplicity of $\mathbf S_\gamma(V)$ in $\mathbf S_\alpha(V)\otimes \mathbf S_\beta(V)$ where $\mathbf S_\gamma(V)$, $\mathbf S_\alpha(V)$, and $\mathbf S_\beta(V)$ are the irreducible representations of $\GL(n)$ corresponding to the partitions $\alpha, \beta, \gamma$ using the notation of \cite[Section 15.3]{fulton--harris}.

\begin{figure}
\begin{ytableau}
\none & \none & 1 & 1 \\
\none & 2 \\
1
\end{ytableau} 
\begin{ytableau}
\none & \none & 1 & 1 \\
\none & 1 \\
2
\end{ytableau}
\caption{All Littlewood--Richardson tableaux of shape $(4, 2, 1)/(2, 1)$ and content $(3,1)$, showing that $c^{(4,2,1)}_{(2,1)(3,1)} = 2$.}
\label{fig:LR-example}
\end{figure}

Let $\mathscr P$ be the set of all partitions. There is a \emph{universal character ring} $\Lambda$ (a $\Z$-algebra defined in \cite[Section 1.4]{koike--terada}) with a $\Z$-basis $\{\chi_{\Sp}(\lambda)\}_{\lambda \in \mathscr P}$ (\cite[Definition 2.1.1, Proposition 2.1.2]{koike--terada}). The structure constants of $\Lambda$ with respect to the $\Z$-basis $\{\chi_{\Sp}(\lambda)\}_{\lambda \in \mathscr P}$ are given by \emph{Newell--Littlewood numbers}:
\begin{thm}[{\cite[Theorem 3.1]{koike}}]\label{thm:tensors}
For any $\mu, \nu \in \mathscr P$,
\[
  \chi_{\Sp}(\mu) \chi_{\Sp}(\nu) = \sum_{\lambda \in \mathscr P} N_{\mu\nu\lambda} \chi_{\Sp}(\lambda)
\]
with $N_{\mu\nu\lambda} = \sum_{\zeta, \sigma, \tau} c_{\zeta\sigma}^\mu c_{\zeta\tau}^\nu c_{\sigma\tau}^\lambda$. Here, $c_{\alpha\beta}^\gamma$ is a Littlewood--Richardson coefficient, i.e. the number of Littlewood--Richardson tableaux of shape $\gamma/\alpha$ and content $\beta$. The constants $N_{\mu\nu\lambda}$ are known as \emph{Newell--Littlewood numbers}.
\end{thm}
There is an algebra homomorphism $\pi_{\Sp(2n)}: \Lambda \to R(\Sp(2n))$ where $R(\Sp(2n))$ is the character ring of $\Sp(2n,\Q_\ell)$ called the \emph{specialization homomorphism} (\cite[Section 2.2]{koike--terada}). If $\lambda \in \mathscr P$ with $\ell(\lambda) \leq n$ where $\ell(\lambda)$ is the length of the partition $\lambda$, then $\pi_{\Sp(2n)}(\chi_{\Sp}(\lambda)) = \chi_{\Sp(2n)}(\lambda)$, the character of the irreducible $\Sp(2n)$-representation corresponding to $\lambda$ (\cite[Proposition 2.2.1]{koike--terada}). The images $\pi_{\Sp(2n)}(\chi_{\Sp}(\lambda)) \in R(\Sp(2n))$ for $\lambda \in \mathscr P$ such that $\ell(\lambda) > n$ are computed in \cite[Section 2.4]{koike--terada} and outlined below:

If $\ell(\lambda) > n$, let $\lambda_i'$ denote the number of boxes in the $i$th column of the Young diagram of $\lambda$ for all $i \geq 1$ and let $\ell$ be the total number of columns. If there exists $i \geq 1$ for which $\lambda_i' - (i-1) = n+1$, then $\pi_{\Sp(2n)}(\chi_{\Sp}(\lambda)) = 0$. 

Now assume $\lambda_i' - (i-1) \neq n+1$ for all $1 \leq i \leq \ell$. For $1 \leq i \leq \ell$, define
\[
  k_i = \begin{cases}
  \lambda_i' & \lambda_i' - (i-1) \leq n, \\
  2n + 2i - \lambda_i' & \lambda_i' - (i-1)> n+1
  \end{cases}
\]
and define $t_i = k_i - (i-1)$. Under these assumptions, $n \geq t_i$ for all $1 \leq i \leq \ell$. If $t_i = t_j$ for some $1 \leq i < j \leq \ell$, then $\pi_{\Sp(2n)}(\chi_{\Sp}(\lambda)) = 0$. Otherwise, reorder the numbers $t_i$ in decreasing order with 
\[
  n \geq t_{i_1} > t_{i_2} > \dots > t_{i_\ell}.
\]
Define $\mu_k' = t_{i_k} + (k-1)$ for $1 \leq k \leq \ell$. If $\mu_\ell' < 0$ then $\pi_{\Sp(2n)}(\chi_{\Sp}(\lambda)) = 0$. Otherwise, let $\mu$ be the Young diagram for which $\mu_k'$ is the number of boxes in the $k$th column. Then $\ell(\mu) \leq n$ and 
\[
  \pi_{\Sp(2n)}(\chi_{\Sp}(\lambda)) = (-1)^{\textup{sgn}(\sigma) + s} \chi_{\Sp(2n)}(\mu)
\]
where $\sigma$ is the permutation with $\sigma(j) = i_j$ for all $1 \leq j \leq \ell$ and $s$ is the number of indices $1 \leq i \leq \ell$ such that $\lambda_i' - (i-1) > n$.

With the arithmetic of $\Lambda$ given in Theorem \ref{thm:tensors} and the specialization homomorphism $\pi_{\Sp(2n)}$, we are ready to prove Lemmas \ref{lem:rep-thy-1,0} and \ref{lem:rep-thy-1,1}. Similarly as in Section \ref{sec:fiber-powers}, we set $\chi_{\Sp}(\lambda_1, \dots, \lambda_n) := 0$ and $\chi_{\Sp(2n)}(\lambda_1, \dots, \lambda_n) := 0$ if $(\lambda_1, \dots, \lambda_n)$ is not a partition, or more specifically if $\lambda_i > \lambda_{i-1}$ or $\lambda_i < 0$ for some $i$.
\begin{proof}[Proof of Lemma \ref{lem:rep-thy-1,0}]
By Theorem \ref{thm:tensors}, 
\[
  \chi_{\Sp}(1) \chi_{\Sp}(a, b) = \sum_{\lambda \in \mathscr P} N_{(1)(a,b)\lambda}\chi_{\Sp}(\lambda)
\]
with $N_{(1)(a,b)\lambda} = \sum_{\zeta, \sigma, \tau} c_{\zeta\sigma}^{(1)} c_{\zeta\tau}^{(a, b)} c_{\sigma\tau}^\lambda$. According to \cite[p. 509 (1)]{koike--terada}, the above sum is given by 
\[
  \chi_{\Sp}(1) \chi_{\Sp}(a, b) = \chi_{\Sp}(a-1, b) + \chi_{\Sp}(a+1, b) + \chi_{\Sp}(a, b-1) + \chi_{\Sp}(a, b+1) + \chi_{\Sp}(a, b, 1).
\]
Applying the specialization homomorphism with \cite[Proposition 2.2.1]{koike--terada} gives
\begin{align*}
  \chi_{\Sp(4)}(1, 0) \chi_{\Sp(4)}(a,b) = & \, \chi_{\Sp(4)}(a-1, b) + \chi_{\Sp(4)}(a+1, b) + \chi_{\Sp(4)}(a, b-1) \\
  &\quad+ \chi_{\Sp(4)}(a, b+1) + \pi_{\Sp(4)}(\chi_{\Sp}(a, b, 1)). \tag{$*$}
\end{align*}
Now we determine $\pi_{\Sp(4)}(\chi_{\Sp}(a,b,1))$. Compute that 
\[
  \lambda_i' = \begin{cases}
  3 & i = 1, \\
  2 & 2 \leq i \leq b,\\
  1 & b < i \leq a.
  \end{cases}
\]
With $i = 1$ and $2n = 4$, 
\[
  \lambda_i' - (i-1) = 3 = n+1
\]
which implies that $\pi_{\Sp(4)}(\chi_{\Sp}(a,b,1)) = 0$.

The character of the $\Sp(4, \Q_\ell)$-representation $W_{1,0}\otimes W_{a,b}$ is the product $\chi_{\Sp(4)}(1) \chi_{\Sp(4)}(a,b)$. Therefore, rewriting $(*)$ on the level of $\Sp(4, \Q_\ell)$-representations proves this lemma.
\end{proof}

\begin{proof}[Proof of Lemma \ref{lem:rep-thy-1,1}]
Let $a \geq b \geq 0$. Suppose $\zeta$, $\sigma$, $\tau$, and $\lambda$ are partitions such that $c_{\zeta\sigma}^{(1,1)}c_{\zeta\tau}^{(a, b)}c_{\sigma\tau}^\lambda \neq 0$; we first list all possibilities for such partitions $\zeta$, $\sigma$, $\tau$, and $\lambda$. Throughout this proof, we define $c_{\alpha\beta}^\gamma := 0$ and $N_{\alpha\beta\gamma} := 0$ if any of $\alpha$, $\beta$, or $\gamma$ are tuples of nonnegative integers but are not partitions.

Since $\zeta, \sigma \subseteq (1,1)$, $\zeta, \tau \subseteq (a, b)$, we must have $\zeta = (\zeta_1, \zeta_2)$, $\sigma = (\sigma_1, \sigma_2)$, and $\tau = (\tau_1, \tau_2)$. 

Consider $c_{\zeta\sigma}^{(1,1)}$. We have $\zeta, \sigma \subseteq (1,1)$ with $\zeta_1 + \zeta_2 + \sigma_1 + \sigma_2 = 2$. This forces the three possibilities: (1) $\zeta = (1,1)$ and $\sigma = (0,0)$, (2) $\zeta = (1,0)$ and $\sigma = (1,0)$, or (3) $\zeta = (0,0)$ and $\sigma = (1,1)$. 

Next, consider $c_{\zeta\tau}^{(a, b)} = c_{\tau\zeta}^{(a, b)}$, and the above three possibilities.
\begin{enumerate}
  \item If $\zeta = (1,1)$, then a tableau with shape $(a, b)/\zeta$ has $a-1$ boxes in the first row and $b-1$ boxes in the second row. In order for the tableau to satisfy the lattice word condition and have the labels be increasing within each row, the entire first row must be labeled $1$. Therefore, the content $(\tau_1, \tau_2)$ must satisfy $\tau_1 \geq a-1$ with $\tau_1 + \tau_2 = a + b - 2$. Because $\tau \subseteq (a, b)$, this forces two possibilities: $\tau = (a-1, b-1)$ or $\tau = (a, b-2)$. 

  Suppose $\tau = (a, b-2)$. We count tableau with shape $(a, b)/\tau$ and content $\zeta = (1,1)$. The tableau of shape $(a, b)/(a, b-2)$ has one row with two boxes, and the lattice word condition imposes that all boxes of the first row must be labeled $1$. Therefore, there are no such tableau with content $\zeta = (1,1)$. For $\tau = (a-1, b-1)$, see Figure \ref{fig:LR-coeff} for the unique tableau with shape $(a, b)/(a-1, b-1)$ and content $\zeta = (1,1)$. 

  \item If $\zeta = (1,0)$, then a tableau with shape $(a, b)/\tau$ and content $\zeta = (1,0)$ must satisfy $\tau = (a-1, b)$ or $(a, b-1)$. In both cases, such a tableau is the unique tableau with one box.

  \item If $\zeta = (0,0)$, then a tableau with shape $(a, b)/\tau$ and content $\zeta = (0,0)$ must satisfy $\tau = (a,b)$. In this case, such a tableau must be the empty one.
\end{enumerate}

Lastly, consider $c_{\sigma\tau}^\lambda = c_{\tau\sigma}^\lambda$. 
\begin{enumerate}
  \item If $\zeta = (1,1)$, $\sigma = (0,0)$, and $\tau = (a-1, b-1)$, then a tableau with shape $\lambda/\tau$ and content $\sigma = (0,0)$ must satisfy $\lambda = (a-1, b-1)$. The only such tableau is the empty one.

  \item If $\zeta = (1,0)$, $\sigma = (1,0)$, and $\tau = (a-1, b)$ or $(a, b-1)$, then a tableau with shape $\lambda/\tau$ and content $\sigma = (1,0)$ must satisfy $\sum_{i\geq 1}\lambda_i = a + b$ with $\tau \subseteq \lambda = (\lambda_1, \lambda_2, \dots)$. If $\tau = (a-1, b)$, then $\lambda = (a, b)$, $(a-1, b+1)$, or $(a-1, b, 1)$. If $\tau = (a, b-1)$, then $\lambda = (a+1, b-1)$, $(a, b)$, or $(a, b-1, 1)$. In all cases, such a tableau must be the unique one with one box.

  \item If $\zeta = (0,0)$, $\sigma = (1,1)$, and $\tau = (a, b)$, then a tableau with shape $\lambda/\tau$ and content $\sigma = (1,1)$ must satisfy $\sum_{i\geq 1} \lambda_i = a+b+2$ with $\tau = (a,b) \subseteq \lambda = (\lambda_1, \lambda_2, \dots)$. Then $\lambda$ is one of
  \[
    (a+2, b),\, (a+1, b+1),\, (a, b+2),\, (a+1, b, 1),\, (a, b+1, 1), \, (a, b, 2), \, (a, b, 1, 1).
  \]
  \begin{enumerate}
    \item Suppose $\lambda = (a+2, b)$ or $(a, b+2)$. The tableau of shape $\lambda/(a,b)$ has one row with two boxes, and the lattice word condition imposes that all boxes of the first row must be labeled $1$. Therefore, there are no such tableau with content $\sigma = (1,1)$. 
    \item Suppose $\lambda = (a+1, b+1)$. See Figure \ref{fig:LR-coeff} for the unique tableau of shape $(a+1, b+1)/(a,b)$ and content $\sigma = (1,1)$.
    \item Suppose $\lambda = (a+1, b, 1)$ or $(a, b+1, 1)$. The tableau of shape $\lambda/(a,b)$ has two boxes, each on a distinct row. Therefore, there is a unique tableau of this shape with content $\sigma = (1,1)$.
    \item Suppose $\lambda = (a,b,2)$. The tableau of shape $(a, b, 2)/(a,b)$ has one row with two boxes, and the lattice word condition imposes that all boxes of the first row must be labeled $1$. Therefore, there are no such tableau with content $\sigma = (1,1)$.
    \item Suppose $\lambda = (a, b, 1, 1)$. The tableau of shape $\lambda/(a,b)$ has two boxes, in two rows and one column. Therefore, there is a unique tableau of shape $\lambda/(a,b)$ and content $\sigma = (1,1)$.
  \end{enumerate}
\end{enumerate}

\begin{figure}
\begin{ytableau}
\none & \none & \none &  1 \\
2 
\end{ytableau} 
\caption{The unique Littlewood--Richardson tableaux of shape $(a, b)/(a-1, b-1)$ or $(a+1, b+1)/(a, b)$ and content $(1,1)$.}
\label{fig:LR-coeff}
\end{figure}

\begin{figure}
\begin{center}
\begin{tabular}{|c|c|c|c||c|c|c||c|} 
 \hline
 $\lambda$ & $\zeta$ & $\sigma$ & $\tau$ & $c_{\zeta\sigma}^{(1,1)}$ & $c_{\zeta\tau}^{(a,b)}$ & $c_{\sigma\tau}^{\lambda}$ & $\pi_{\Sp(4)}(\chi_{\Sp}(\lambda))$ \\ [0.5ex] 
 \hline\hline
 $(a-1, b-1)$ & $(1,1)$ & $(0,0)$ & $(a-1, b-1)$ & $1$ & $1$ & $1$ & $\chi_{\Sp(4)}(\lambda)$ \\ 
 \hline
 $(a-1, b+1)$ & $(1,0)$ & $(1,0)$ & $(a-1, b)$ & $1$ & $1$ & $1$ & $\chi_{\Sp(4)}(\lambda)$ \\
 \hline
 \multirow{2}{*}{$(a,b)$} & $(1,0)$ & $(1,0)$ & $(a-1, b)$ & $1$ & $1$ & $1$ & \multirow{2}{*}{$\chi_{\Sp(4)}(\lambda)$} \\
 & $(1,0)$ & $(1,0)$ & $(a, b-1)$ & $1$ & $1$ & $1$ &  \\
 \hline
 $(a+1, b-1)$ & $(1,0)$ & $(1,0)$ & $(a, b-1)$ & $1$ & $1$ & $1$ &$\chi_{\Sp(4)}(\lambda)$  \\
 \hline
 $(a+1, b+1)$ & $(0,0)$ & $(1,1)$ & $(a,b)$ & $1$ & $1$ & $1$ & $\chi_{\Sp(4)}(\lambda)$\\
 \hline
 $(a, b, 1, 1)$ & $(0,0)$ & $(1,1)$ & $(a, b)$ & $1$ & $1$ & $1$ & $-\chi_{\Sp(4)}(a,b)$ \\
 \hline
 $(a-1, b, 1)$ & $(1,0)$ & $(1,0)$ & $(a-1, b)$ & $1$ & $1$ & $1$ & $0$ \\ 
 \hline
 $(a, b-1, 1)$ & $(1,0)$ & $(1,0)$ & $(a, b-1)$ & $1$ & $1$ & $1$ & $0$ \\
 \hline
 $(a+1, b, 1)$ & $(0,0)$ & $(1,1)$ & $(a,b)$ & $1$ & $1$ & $1$ & $0$ \\ 
 \hline
 $(a, b+1, 1)$ & $(0,0)$ & $(1,1)$ & $(a,b)$ & $1$ & $1$ & $1$ & $0$ \\ 
 \hline
\end{tabular}
\end{center}
\caption{All partitions $\lambda$, $\zeta$, $\sigma$, and $\tau$ such that $c_{\zeta\sigma}^{(1,1)} c_{\zeta\tau}^{(a,b)} c_{\sigma\tau}^{\lambda} \neq 0$ for fixed $(a, b)$ with $a \geq b \geq 0$. The values given for $c_{\alpha\beta}^{\gamma}$ assume that $\alpha$, $\beta$, and $\gamma$ are all partitions. Otherwise, replace the nonzero value given for $c_{\alpha\beta}^{\gamma}$ with $0$.}\label{fig:LR-table}
\end{figure}
In Figure \ref{fig:LR-table}, we record the calculations of the Littlewood--Richardson coefficients $c_{\zeta\sigma}^{(1,1)}$, $c_{\zeta\tau}^{(a,b)}$, and $c_{\sigma\tau}^\lambda$ for partitions $\lambda$, $\zeta$, $\sigma$, and $\tau$ found above such that $c_{\zeta\sigma}^{(1,1)}c_{\zeta\tau}^{(a,b)}c_{\sigma\tau}^{\lambda} \neq 0$. Next, we compute $N_{(1,1)(a,b)\lambda} \chi_{\Sp}(\lambda)$ for some of the partitions $\lambda$ listed in Figure \ref{fig:LR-table}. For any subset $S\subseteq \mathscr P$, the indicator function of $S$ is denoted by $\ind_S$ (cf. Definition \ref{defn:indicator}). Recall that if $\lambda$ is not a partition, then $\chi_{\Sp}(\lambda) = 0$ by definition.
\begin{enumerate}
  \item If $\lambda = (a-1, b-1)$, then $\zeta = (1,1)$, $\sigma = (0,0)$, and $\tau = (a-1, b-1)$, so 
  \[
    N_{(1,1)(a,b)(a-1,b-1)} = c_{(1,1)(0,0)}^{(1,1)}c_{(1,1)(a-1,b-1)}^{(a,b)}c_{(0,0)(a-1,b-1)}^{(a-1,b-1)} = \ind_{b \geq 1}(a,b)
  \]
  and
  \[
    N_{(1,1)(a,b)(a-1,b-1)} \chi_{\Sp}(a-1,b-1) = \ind_{b \geq 1}(a,b) \chi_{\Sp}(a-1,b-1) = \chi_{\Sp}(a-1,b-1).
  \]

  \item If $\lambda = (a-1, b+1)$, then $\zeta = (1,0)$, $\sigma = (1,0)$, and $\tau = (a-1, b)$, so
  \[
    N_{(1,1)(a,b)(a-1, b+1)} = c_{(1,0)(1,0)}^{(1,1)} c_{(1,0)(a-1, b)}^{(a,b)} c_{(1,0)(a-1, b)}^{(a-1, b+1)}  = \ind_{a \geq b + 2}(a,b)
  \]
  and
  \[
    N_{(1,1)(a,b)(a-1,b+1)}\chi_{\Sp}(a-1,b+1) = \ind_{a \geq b + 2}(a,b) \chi_{\Sp}(a-1,b+1) = \chi_{\Sp}(a-1,b+1).
  \]

  \item If $\lambda = (a, b)$, then 
  \begin{enumerate}
    \item $\zeta = (1,0)$, $\sigma = (1,0)$, and $\tau = (a-1, b)$ or
    \item $\zeta = (1,0)$, $\sigma = (1,0)$, and $\tau = (a, b-1)$, so
  \end{enumerate}
  \[
    N_{(1,1)(a,b)(a,b)} = c_{(1,0)(1,0)}^{(1,1)} c_{(1,0)(a-1,b)}^{(a,b)} c_{(1,0)(a-1,b)}^{(a,b)} + c_{(1,0)(1,0)}^{(1,1)} c_{(1,0)(a, b-1)}^{(a,b)} c_{(1,0)(a, b-1)}^{(a,b)} = \ind_{a \geq b+1}(a, b) + \ind_{b \geq 1}(a,b)
  \]
  and
  \[
    N_{(1,1)(a,b)(a,b)}\chi_{\Sp}(a,b) = (\ind_{a \geq b+1}(a, b) + \ind_{b \geq 1}(a,b)) \chi_{\Sp}(a,b).
  \]

  \item If $\lambda = (a+1, b-1)$, then $\zeta = (1,0)$, $\sigma = (1,0)$, and $\tau = (a, b-1)$, so 
  \[
    N_{(1,1)(a,b)(a+1,b-1)} = c_{(1,0)(1,0)}^{(1,1)}c_{(1,0)(a,b-1)}^{(a,b)}c_{(1,0)(a,b-1)}^{(a+1,b-1)} = \ind_{b \geq 1}(a,b)
  \]
  and
  \[
    N_{(1,1)(a,b)(a+1,b-1)}\chi_{\Sp}(a+1,b-1) =  \ind_{b \geq 1}(a,b) \chi_{\Sp}(a+1,b-1) = \chi_{\Sp}(a+1,b-1).
  \]

  \item If $\lambda = (a+1, b+1)$, then $\zeta = (0,0)$, $\sigma = (1,1)$, and $\tau = (a,b)$, so
  \[
    N_{(1,1)(a,b)(a+1, b+1)} = c_{(0,0)(1,1)}^{(1,1)} c_{(0,0)(a,b)}^{(a,b)} c_{(1,1)(a,b)}^{(a+1, b+1)} = 1
  \]
  and
  \[
    N_{(1,1)(a,b)(a+1, b+1)}\chi_{\Sp}(a+1,b+1) = \chi_{\Sp}(a+1,b+1).
  \]

  \item If $\lambda = (a,b,1,1)$, then $\zeta = (0,0)$, $\sigma = (1,1)$, and $\tau = (a,b)$, so
  \[
    N_{(1,1)(a,b) (a,b,1,1)} = c_{(0,0)(1,1)}^{(1,1)} c_{(0,0)(a,b)}^{(a,b)} c_{(1,1)(a,b)}^{(a,b,1,1)} = \ind_{b \geq 1}(a,b)
  \]
  and
  \[
    N_{(1,1)(a,b) (a,b,1,1)} \chi_{\Sp}(a,b,1,1) = \ind_{b \geq 1}(a,b) \chi_{\Sp}(a,b,1,1).
  \]
\end{enumerate}
Aside from the six partitions $\lambda$ considered above, the remaining partitions $\lambda$ such that $N_{(1,1)(a,b)\lambda} \neq 0$ are $\lambda = (a+\varepsilon_1, b+\varepsilon_2, 1)$ for $(\varepsilon_1, \varepsilon_2) = (\pm 1, 0), (0, \pm 1)$. For such $\lambda = (a+\varepsilon_1, b+\varepsilon_2, 1)$, compute that
\[
  \lambda_i' = \begin{cases}
  3 & i = 1, \\
  2 & 2 \leq i \leq b+\varepsilon_2, \\
  1 & b + \varepsilon_2 < i \leq a + \varepsilon_1.
  \end{cases}
\]
With $i = 1$,
\[
  \lambda_i' - (i-1) =  3 = n+1
\]
which implies that $\pi_{\Sp(4)}(\chi_{\Sp}(a + \varepsilon_1, b + \varepsilon_2, 1)) = 0$.

Next, we determine $\pi_{\Sp(4)}(\chi_{\Sp}(a, b, 1, 1))$. Compute that 
\[
  \lambda_i' = \begin{cases}
  4 & i = 1, \\ 
  2 & 2 \leq i \leq b \\
  1 & b < i \leq a.
  \end{cases}
\]
Then $\lambda_i' - (i-1) \neq n+1 = 3$ for all $i$. Compute that
\begin{align*}
  k_i &= \begin{cases}
  2 & 1 \leq i \leq b \\
  1 & b < i \leq a.
  \end{cases}
\end{align*}
The values $t_i = k_i - (i-1)$ are all distinct and decreasing so compute that 
\[
  \mu_i' = t_i + (i-1) = k_i = \begin{cases}
  2 & 1 \leq i \leq b \\
  1 & b < i \leq a.
  \end{cases}
\]
The Young diagram defined by the values $\mu_i'$ is the diagram of the partition $(a,b)$. Since the $t_i$ are strictly decreasing, the permutation $\sigma$ reordering these terms is trivial. The only index $i$ such that $\lambda_i' - (i-1) > n = 2$ is $i = 1$, and so the number $s$ of such indices is $1$. Finally, \cite[Proposition 2.4.1]{koike--terada} implies that 
\[
  \pi_{\Sp(4)}(\chi_{\Sp}(a, b, 1, 1)) = (-1)^{\textup{sgn}(\sigma) + s} \chi_{\Sp(4)}(a,b) = -\chi_{\Sp(4)}(a,b).
\]
We record the results of these calculations of $\pi_{\Sp(4)}(\chi_{\Sp}(\lambda))$ in Figure \ref{fig:LR-table}. Recall that $\pi_{\Sp(4)}(\chi_{\Sp}(\lambda)) = \chi_{\Sp(4)}(\lambda)$ if $\ell(\lambda) \leq 2$ by \cite[Proposition 2.2.1]{koike--terada}.

Applying Theorem \ref{thm:tensors} and applying the specialization homomorphism $\pi_{\Sp(4)}$ with \cite[Proposition 2.2.1]{koike--terada} and the above computations,
\begin{align*}
\chi_{\Sp(4)}(1,1) \chi_{\Sp(4)}(a,b) &= \sum_{\lambda \in \mathscr P} N_{(1,1)(a,b)\lambda} \pi_{\Sp(4)} (\chi_{\Sp}(\lambda)) \\
&= (\ind_{a \geq b+1}(a,b) + \ind_{b \geq 1}(a,b)) \chi_{\Sp(4)}(a,b) + \chi_{\Sp(4)}(a-1, b-1) + \chi_{\Sp(4)} (a-1, b+1) \\
&\quad + \chi_{\Sp(4)}(a+1, b-1) + \chi_{\Sp(4)}(a+1, b+1) \\
&\quad + \left(\sum_{(\varepsilon_1, \varepsilon_2) = (\pm 1, 0), (0, \pm 1)}N_{(1,1)(a,b)(a+\varepsilon_1, b+\varepsilon_2, 1)}\pi_{\Sp(4)}(\chi_{\Sp}(a+\varepsilon_1, b + \varepsilon_2, 1))\right) \\
&\quad + \ind_{b \geq 1}(a,b)\pi_{\Sp(4)}(\chi_{\Sp}(a,b,1,1)) \\
&= (\ind_{a \geq b+1}(a,b) + \ind_{b \geq 1}(a,b)) \chi_{\Sp(4)}(a,b) + \chi_{\Sp(4)}(a-1, b-1) + \chi_{\Sp(4)} (a-1, b+1) \\
&\quad + \chi_{\Sp(4)}(a+1, b-1) + \chi_{\Sp(4)}(a+1, b+1) - \ind_{b \geq 1}(a,b)\chi_{\Sp(4)}(a,b) \\
&= \ind_{a \geq b+1}(a,b)\chi_{\Sp(4)}(a,b) + \chi_{\Sp(4)}(a-1, b-1) + \chi_{\Sp(4)} (a-1, b+1) \\
&\quad + \chi_{\Sp(4)}(a+1, b-1) + \chi_{\Sp(4)}(a+1, b+1).
\end{align*}

The character of the $\Sp(4, \Q_\ell)$-representation $W_{1,1} \otimes W_{a,b}$ is the product $\chi_{\Sp(4)}(1,1) \chi_{\Sp(4)}(a,b)$. Therefore, rewriting the above equality on the level of $\Sp(4, \Q_\ell)$-representations proves this lemma.
\end{proof}

\bibliographystyle{alpha}
\bibliography{universal-abelian-surface}
\end{document}